\pdfoutput=1
\documentclass[reqno]{amsart}

\usepackage[letterpaper,margin=1.25in]{geometry}

\usepackage{amsmath,amsthm,amssymb,amscd}
\usepackage[all]{xy}
\newdir{ >}{{}*!/-8pt/@{>}} %

\usepackage{upgreek} %

\usepackage{xcolor}
\definecolor{darkgreen}{rgb}{0,0.45,0}
\usepackage{ifpdf}
\ifpdf  \usepackage[pdftex,colorlinks,urlcolor=blue,citecolor=darkgreen,linkcolor=darkgreen,linktocpage]{hyperref}
\else   \usepackage[dvipdfmx,colorlinks,urlcolor=blue,citecolor=darkgreen,linkcolor=darkgreen,linktocpage]{hyperref}
\fi

\newtheorem{de}{Definition}[section]
\newtheorem{lem}[de]{Lemma}
\newtheorem{prop}[de]{Proposition}

\newtheorem{thm}[de]{Theorem}
\newtheorem{thmintro}{Theorem}

\theoremstyle{remark}
\newtheorem{rem}[de]{Remark}
\newtheorem{ex}[de]{Example}

\newcommand{\cxymatrix}[1]{\vcenter{\xymatrix{#1}}}

\newcommand{\N}{\ensuremath{\mathbb{N}}}
\newcommand{\Z}{\ensuremath{\mathbb{Z}}}

\newcommand{\R}{\ensuremath{\mathbb{R}}}

\newcommand{\Diff}{\mathrm{Diff}}
\newcommand{\Plot}{\mathrm{Plot}}
\newcommand{\Vect}{\mathrm{Vect}}
\newcommand{\dvs}{\mathrm{dvs}}
\newcommand{\DVPB}{\mathrm{DVPB}}

\newcommand{\Hom}{\mathrm{Hom}}
\newcommand{\Supp}{\mathrm{Supp}}
\newcommand{\Mor}{\mathrm{Mor}}
\newcommand{\op}{\mathrm{op}}
\DeclareMathOperator{\colim}{colim}
\newcommand{\Obj}{\mathrm{Obj}}
\newcommand{\Ab}{\mathrm{Ab}}
\newcommand{\ddt}{\frac{d}{dt}}
\newcommand{\ddx}{\frac{d}{dx}}
\newcommand{\ddy}{\frac{d}{dy}}
\newcommand{\cC}{\mathcal{C}}
\newcommand{\cI}{\mathcal{I}}

\newcommand{\lra}{\longrightarrow}
\newcommand{\llra}[1]{\stackrel{#1}{\lra}}  %

\newcommand{\Exterior}{\mathchoice{{\textstyle\bigwedge}}%
{{\bigwedge}}%
{{\textstyle\wedge}}%
{{\scriptstyle\wedge}}}

\title{Exterior bundles in diffeology}

\author{J. Daniel Christensen}
\email{jdc@uwo.ca}
\address{Department of Mathematics, University of Western Ontario, London, Ontario, Canada}

\author{Enxin Wu}
\email{exwu@stu.edu.cn}
\address{Department of Mathematics, Shantou University, Guangdong, P.R. China}

\date{July 9, 2021}

\begin{document}

\begin{abstract}
We explore several notions of $k$-form at a point in a diffeological space,
construct bundles of such $k$-forms, and compare sections of these
bundles to differential forms.
As they are defined locally, our $k$-forms can contain more information
than the values of differential forms contain, and we illustrate this with
many examples.
To organize our work, we develop the basic theory of diffeological vector
pseudo-bundles, including a detailed understanding of their limits and colimits,
as well as a variety of fibrewise operations such as products, direct sums,
tensor products, exterior powers and dual bundles.
\end{abstract}

\makeatletter
\@namedef{subjclassname@2020}{%
  \textup{2020} Mathematics Subject Classification}
\makeatother
\subjclass[2020]{58A10, 58A05, 57P99.}

\keywords{Diffeology, diffeological space, diffeological vector pseudo-bundle,
differential form, internal tangent bundle.}

\maketitle

\tableofcontents

\section{Introduction}

Diffeology is a framework for studying general smooth spaces, including smooth manifolds, 
singular spaces (such as smooth manifolds with boundary or corners, orbifolds, 
irrational tori and other exotic quotients, etc.), and infinite-dimensional 
spaces (e.g., function spaces between smooth manifolds, diffeomorphism groups, etc.)
It is natural to try to extend ideas from differential geometry to diffeological spaces, 
possibly under some mild assumptions, unifying similar constructions made in special cases.
In the current paper, we focus on the connection between
tangent bundles and differential forms on diffeological spaces.

The notion of a differential form on a diffeological space
goes back to early work of Souriau~\cite{So3}
and has been studied by many authors, such as~\cite{GI,I2,La}:
a $k$-form $\omega$ on a diffeological space $X$ is defined to be a compatible
family of $k$-forms $\omega_p \in \Omega^k(U)$ for each plot $p : U \to X$.
In classical differential geometry, a $k$-form can be defined as a section
of a bundle of exterior $k$-forms, and one of the goals of the present paper
is to investigate various ways that this approach can be used for diffeological spaces.

In~\cite{I2}, Iglesias-Zemmour gives one solution to this problem.
He defines a space $\Uplambda^k_x(X)$ of $k$-forms at the point $x \in X$
and assembles these into a diffeological vector pseudo-bundle (Definition~\ref{de:dvpb})
$\Uplambda^k X$ over $X$ whose sections recover the $k$-forms on $X$.

We introduce two new bundles over $X$.
The bundle $(\Exterior^k T^{\dvs} X)^*$ is formed just as in classical differential
geometry, by starting with the (internal) tangent bundle $T^{\dvs} X$ of $X$, and taking
the dual of the exterior power.
Here the tangent bundle is the colimit of the ordinary tangent
bundles $TU \to U$ in the category of diffeological vector pseudo-bundles,
where the colimit is taken over plots $p : U \to X$.

In order to compare sections of $(\Exterior^k T^{\dvs} X)^*$ with ordinary differential forms,
we also introduce another bundle $(T^k X)^*$, the dual of a bundle $T^k X$
whose definition is inspired by the tangent bundle:
$T^k X$ is the colimit of the ordinary exterior bundles $\Exterior^k TU \to U$
over the category of plots in $X$.

The three bundles are related by natural smooth bundle maps
\[
  \xymatrix{
  \Uplambda^k X \ar@{ >->}[r]^i & (T^k X)^* & (\Exterior^k T^{\dvs} X)^* \ar[l]_{\rho^*} ,
  }
\]
with $i$ being injective and $\rho^*$ being the dual of a map $\rho$
going in the other direction. We don't know whether or not $i$ is always an induction.
The results of Section~\ref{se:duality} can be summarized as follows.

\begin{thmintro}\label{th:intro}
For any diffeological space $X$ and any $k \geq 0$, the map $i$ induces
an isomorphism on sections, so that in particular we have a natural isomorphism
\[
  \Gamma((T^kX)^* \to X) \lra \Omega^k(X)
\]
of diffeological vector spaces.

If $k = 0$ or $1$, or $X$ is filtered, then the map $\rho$ is an isomorphism
of diffeological vector pseudo-bundles.
In particular, $\rho^*$ induces an isomorphism on sections.
\end{thmintro}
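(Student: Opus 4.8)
The two assertions are logically independent, and I would establish them in turn. For the statement about $i$, the plan is to construct a natural isomorphism $\Phi\colon\Omega^k(X)\to\Gamma((T^kX)^*\to X)$ of diffeological vector spaces and then to observe that $i_*\circ\Psi=\Phi$, where $\Psi\colon\Omega^k(X)\to\Gamma(\Uplambda^kX)$ is Iglesias-Zemmour's isomorphism; it then follows that $i_*$ is a composite of isomorphisms, hence an isomorphism, and the natural isomorphism of the statement follows. To build $\Phi$ I would use two facts from the earlier part of the paper: the colimit presentation $T^kX=\colim_{p\colon U\to X}\Exterior^kTU$ in $\DVPB$, and the identification of a section of a dual pseudo-bundle $E^*\to X$ with a fibrewise-linear bundle map $E\to X\times\R$ over $X$, naturally and smoothly. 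Combining these, the universal property of the colimit shows that a section of $(T^kX)^*$ is the same as a compatible family of fibrewise-linear maps $\Exterior^kTU\to U\times\R$, i.e.\ a compatible family $\{\omega_p\}$ with $\omega_p\in\Omega^k(U)$, which is exactly a $k$-form on $X$. Explicitly, $\Phi(\omega)$ is the section $x\mapsto\bigl([p,u,\alpha]\mapsto(\omega_p)_u(\alpha)\bigr)$, well defined on the colimit by the compatibility of $\omega$, with inverse $t\mapsto\{(u,\alpha)\mapsto t(p(u))([p,u,\alpha])\}$. Smoothness of $\Phi$ and $\Phi^{-1}$ I would check by unwinding the functional diffeologies and testing on plots, which reduces everything to the smoothness of the $\omega_p$ and of $t$. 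The identity $i_*\circ\Psi=\Phi$ is then a matter of unwinding the definition of $i$, which sends a $k$-form at $x$ to the linear functional on $(T^kX)_x$ defined by the very same "evaluate on a $k$-vector tangent at $x$" recipe, so that $i\circ\Psi(\omega)$ and $\Phi(\omega)$ agree fibrewise.

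For the statement about $\rho$, I would first identify $\rho$ with the canonical comparison morphism $\colim_p\Exterior^kTU\to\Exterior^k(\colim_pTU)$ obtained by applying the fibrewise functor $\Exterior^k$ to the plot diagram, and then show that this comparison is an isomorphism in each of the three cases. When $k=0$ or $k=1$, the functor $\Exterior^k$ is, up to canonical isomorphism, either $E\mapsto X\times\R$ or the identity; each preserves all colimits (for $k=0$ one uses that $(-)\times\R$ preserves colimits in $\Diff$, by cartesian closedness), so $\rho$ is an isomorphism with no hypothesis on $X$. When $X$ is filtered, the colimit diagrams presenting $T^{\dvs}X$ and $T^kX$ are (by definition) filtered, and $\Exterior^k$ preserves filtered colimits: on vector spaces this is because $\Exterior^kV$ is a quotient of $V^{\otimes k}$, tensor product is a left adjoint in each variable, so $(\colim V_i)^{\otimes k}\cong\colim_iV_i^{\otimes k}$ by cofinality of the diagonal, and the quotient passes to the colimit; at the level of pseudo-bundles the total space of $\Exterior^kE$ is a quotient of an iterated fibre product of $E$ over its base, and finite fibre products commute with filtered colimits in $\Diff$, so the same holds. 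Hence $\rho$ is a fibrewise isomorphism covering $\mathrm{id}_X$, and by the structure theory of colimits in $\DVPB$ it is an isomorphism of diffeological vector pseudo-bundles. Dualizing, $\rho^*$ is an isomorphism, and therefore induces an isomorphism $\Gamma((\Exterior^kT^{\dvs}X)^*)\to\Gamma((T^kX)^*)$ on sections.

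I expect the main obstacles to be the following. For the first assertion, the technical heart is the bookkeeping around the colimit in $\DVPB$ — that sections of a dual pseudo-bundle correspond to maps into the trivial line bundle, compatibly with the colimit's universal property — together with the smoothness of $\Phi$ and $\Phi^{-1}$ as maps of diffeological vector spaces, which must be verified plot-by-plot; one must also be careful that the definitions of $\Uplambda^kX$ and of $i$ are arranged so that $i_*\circ\Psi=\Phi$ holds exactly. For the second assertion, the crux is twofold: checking that $\Exterior^k$ as a \emph{fibrewise} operation on pseudo-bundles is compatible with the colimit presentation, so that $\rho$ genuinely is the comparison morphism above; and promoting a fibrewise linear isomorphism covering $\mathrm{id}_X$ to an isomorphism of pseudo-bundles. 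The latter is exactly the point at which one needs $\Exterior^k$ to preserve filtered colimits in $\DVPB$ — and hence where the hypothesis that $X$ is filtered enters — so I would regard establishing that preservation property as the single most substantial step.
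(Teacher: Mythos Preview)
Your approach to the first assertion, and to the cases $k=0,1$ of the second, is essentially the paper's: the isomorphism $\Gamma((T^kX)^*)\cong\Omega^k(X)$ is obtained exactly by the chain
\[
\Gamma((T^kX)^*)\ \cong\ \DVPB_X(T^kX,\,X\times\R)\ \cong\ \lim_{p}\Omega^k(U)\ =\ \Omega^k(X),
\]
using the colimit presentation of $T^kX$ and the identification of sections of a dual with bundle maps to the trivial line bundle. The statement that $i$ induces an isomorphism on sections is handled in the paper by observing that the image of $\Uplambda^kX$ in $(T^kX)^*$ is precisely the subbundle reachable by sections; this is equivalent to your factoring $i_*\circ\Psi=\Phi$.

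For the filtered case of the second assertion, however, your argument has a gap at the diffeological level. You write that ``the colimit diagrams presenting $T^{\dvs}X$ and $T^kX$ are (by definition) filtered'', but this is not what ``$X$ is filtered'' means: the hypothesis is that each \emph{germ category} $\mathcal{G}(X,x)$ is filtered, not that the plot category $\Plot(X)$ over which the $\DVPB$ colimit is taken is filtered. Filteredness of the germ categories does give fibrewise bijectivity of $\rho$ via your $\Vect$ argument, and the paper does exactly this. But that leaves open the smoothness of $\rho^{-1}$, i.e., that $\rho$ is a subduction. Your proposed justification, that finite fibre products commute with filtered colimits in $\Diff$, does not apply: even setting aside that $\Plot(X)$ need not be filtered, the total space of a colimit in $\DVPB$ is \emph{not} the colimit of total spaces in $\Diff$, because of the dvsification step in the construction of $\DVPB$ colimits. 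The paper instead establishes the subduction property by an explicit local argument: a plot $q$ in $\Exterior^kT^{\dvs}X$ is locally a finite sum of wedges of plots in Hector's diffeology on $TX$, each of which factors through some $Tp_{i,j}:TU_{i,j}\to TX$; filteredness of the germ category at the basepoint $q'(0)$ is then used to factor all the $p_{i,j}$ through a common pointed plot $p:(U,0)\to(X,x_0)$ in a way that coequalizes the relevant germs, yielding an explicit local lift of $q$ to $T^kX$. This hands-on lifting is where the hypothesis is actually consumed, and it does not reduce to an abstract preservation of colimits by $\Exterior^k$ in $\DVPB$.
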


Filtered diffeological spaces include smooth manifolds, irrational tori
and all fine diffeological vector spaces.

The inclusion $i$ can be strict, as we show in Section~\ref{se:spaghetti}.
The vector space $\Uplambda^k_x(X)$ is defined to be a certain quotient of $\Omega^k(X)$,
so by definition, it only contains $k$-forms at the point $x$ which extend globally
to smooth $k$-forms on $X$.
In contrast, an element of $(T^k_x X)^*$ is specified by simply giving compatible
$k$-forms at $0 \in U$ for each pointed plot $(U,0) \to (X,x)$,
subject to a smoothness condition.
This allows for forms that reflect the local geometry, but cannot be extended.

Similarly, the elements of $(\Exterior^k T^{\dvs} X)^*$ allow further flexibility
in that they can be detected by exterior products of tangent vectors
coming from different plots.
We denote the sections of $(\Exterior^k T^{\dvs} X)^*$ by $\widetilde{\Omega}^k(X)$.
When $k \geq 2$ and $X$ is not filtered, this can differ from $\Omega^k(X)$.
We give a simple example of this in the introduction to Section~\ref{se:duality},
which also shows that these new forms can encode useful information.

\medskip

We now give a summary of the contents of the paper.
In Section~\ref{se:background} we give background about diffeological spaces,
tangent bundles, differential forms, and the bundle $\Uplambda^k X$.
Section~\ref{se:operations} is the technical core of the paper.
We study diffeological vector pseudo-bundles,
proving results about limits and colimits of diffeological vector pseudo-bundles.
We describe analogs of all of the standard fibrewise bundle operations in this setting,
extending work of~\cite{V} and~\cite{P}.
For example, we study products, direct sums, tensor products, exterior powers
and $\Hom$ bundles.
This material will be of use in any setting that involves diffeological vector
pseudo-bundles.
In Section~\ref{se:duality}, we introduce our new bundles and
prove the results summarized in Theorem~\ref{th:intro}.
The remaining sections give detailed calculations of the various types of
forms, showing that the claims of Theorem~\ref{th:intro} cannot be strengthened.
Section~\ref{se:axes} discusses the axes in $\R^2$, with the gluing diffeology,
while Section~\ref{se:spaghetti} discusses $\R^2$ with the spaghetti diffeology.

We would like to thank the anonymous referee for carefully reading the manuscript
and providing valuable suggestions for improving the presentation.

\section{Background}\label{se:background}

\subsection{Diffeological spaces}\label{subsection-diff}

Diffeological spaces are generalizations of smooth manifolds which
include singular spaces and infinite-dimensional spaces.
The idea was first introduced in~\cite{S},
and further developed in~\cite{I1,I2}. \cite{I2} is currently the standard textbook for this subject. We
summarize the basic concepts and results in this section. See~\cite{I2} or~\cite[Section~2]{CSW} for details.

\begin{de}
 A \textbf{diffeological space} is a set $X$ together with a family of maps $U \to X$ (called \textbf{plots}) for each
 open subset $U$ of $\R^n$ for $n \in \N$, such that the following conditions hold:
 \begin{enumerate}
  \item every constant map is a plot;
  \item if $U \to X$ is a plot and $V \to U$ is a smooth map between open subsets
  of Euclidean spaces, then the composite $V \to U \to X$ is a plot;
  \item a map is a plot if and only if it is locally a plot (with respect to the usual open covers).
 \end{enumerate}

 A map between diffeological spaces is called \textbf{smooth} if it sends plots of the domain to the plots of the
 codomain.

Diffeological spaces together with smooth maps form a category denoted $\Diff$. An isomorphism in $\Diff$
 is called a \textbf{diffeomorphism}.
\end{de}

Here are the nice properties of the category $\Diff$:
\pagebreak[2] %

\begin{thm}\
 \begin{enumerate}
  \item The category of smooth manifolds and smooth maps is a full subcategory of $\Diff$.
  \item The category $\Diff$ is complete, cocomplete and cartesian closed.  \qed
 \end{enumerate}
\end{thm}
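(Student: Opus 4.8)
The plan is to handle the two items separately, using the fact that the forgetful functor $\Diff \to \Set$ is \emph{topological}, so that every source of maps admits a coarsest (initial) lift and every sink admits a finest (final) lift. For item (1), I would first equip each smooth manifold $M$ with the diffeology whose plots $U \to M$ are exactly the maps that are smooth in the classical sense; the three axioms are immediate, since constant maps are smooth, smoothness is stable under precomposition with smooth maps of open sets, and smoothness is a local condition. It then remains to check that a set map $f : M \to N$ between manifolds is classically smooth if and only if it carries plots to plots. The forward implication is closure of smooth maps under composition; for the converse I would test $f$ on the plots coming from the charts of $M$, concluding that $f$ is smooth near every point. This makes the inclusion $\mathrm{Mfld} \to \Diff$ full and faithful, and it is obviously injective on objects, hence a full embedding.

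For completeness and cocompleteness in item (2), I would build limits and colimits by hand. Given a diagram $D : J \to \Diff$, form the limit $L$ of the underlying sets and put on it the initial diffeology for the projections: $p : U \to L$ is a plot iff every composite $U \to L \to D(j)$ is a plot. One verifies the three axioms and the universal property, the latter because a set map into $L$ is smooth iff all its components are. Dually, form the colimit $C$ of the underlying sets and give it the final diffeology: $U \to C$ is a plot iff it is locally a composite of a plot of some $D(j)$ with the structure map $D(j) \to C$. Here one must check that the locality axiom holds, which is exactly why "locally" enters the definition, and the universal property follows since a set map out of $C$ is smooth iff each composite with a structure map is.

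For cartesian closedness, given diffeological spaces $X$ and $Y$ I would equip the set $C^\infty(X,Y)$ of smooth maps with the functional diffeology: $q : U \to C^\infty(X,Y)$ is a plot iff the adjoint $U \times X \to Y$, $(u,x) \mapsto q(u)(x)$, is smooth, where $U \times X$ carries the product diffeology already constructed as a limit. After checking the axioms, I would establish the exponential law: evaluation $C^\infty(X,Y) \times X \to Y$ is smooth, and for any diffeological space $Z$ and any smooth $f : Z \times X \to Y$ the adjoint $\tilde f : Z \to C^\infty(X,Y)$ is smooth. Together with uniqueness of adjoints this yields a natural bijection $C^\infty(Z \times X, Y) \cong C^\infty(Z, C^\infty(X,Y))$, i.e.\ $- \times X$ is left adjoint to $C^\infty(X, -)$.

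The step I expect to be most delicate is the interplay between the product diffeology and the functional diffeology in the exponential law, specifically showing that if $\tilde f$ precomposed with every plot of $Z$ lands in the functional diffeology then $f$ is smooth on $Z \times X$; this requires unwinding how plots of a product arise. Everything else is a matter of carefully applying the defining axioms and is entirely standard, so for the details I would simply cite~\cite{I2} or~\cite[Section~2]{CSW}.
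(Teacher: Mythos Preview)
Your proposal is correct and follows the standard approach. Note, however, that the paper provides no proof at all for this theorem: it is stated with a \qed and followed immediately by a pointer to~\cite[Section~2]{CSW} for the explicit descriptions of limits, colimits, and function spaces. Your outline---manifold diffeology via classically smooth maps, initial diffeology for limits, final diffeology for colimits, functional diffeology for exponentials---is precisely the construction one finds in those references, so in effect you have written out what the paper chose to leave as a citation.
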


The descriptions of limits, colimits and function spaces are quite
simple, and are concisely described in~\cite[Section~2]{CSW}.

A smooth map $i : X \to Y$ between diffeological spaces is called an \textbf{induction}
if it is injective and a function $p : U \to X$ is a plot if and only if $i \circ p$ is a plot.
For every injective function $i : X \to Y$ with $Y$ a diffeological space, there
is a unique diffeology on $X$ making $i$ an induction.
This is called the \textbf{subset diffeology}.
A smooth map $f : X \to Y$ between diffeological spaces is called a \textbf{subduction}
if every plot $p : U \to Y$ locally factors smoothly through $f$.
For every surjective function $f : X \to Y$ with $X$ a diffeological space,
there is a unique diffeology on $Y$ making $f$ a subduction.
This is called the \textbf{quotient diffeology}.

Fix a set $X$. The intersection of any family of diffeologies on $X$ is again a diffeology.
A collection of maps from open subsets of Euclidean spaces to
a fixed diffeological space \textbf{generates} the diffeology if the diffeology is the smallest diffeology
containing this collection of maps.
The \textbf{$D$-dimension} of a diffeological space $X$ is the infimum,
over all choices of sets of plots that generate the diffeology on $X$,
of the supremum of the dimension of the domains of the generating plots.
This generalizes the concept of dimension from smooth manifolds to diffeological spaces.

Given a diffeological space $X$, we have a category $\Plot(X)$, called the \textbf{plot category} of $X$.
The objects are the plots $p:U \to X$ and the morphisms are the commutative triangles
\begin{equation}\label{eq:plots}
 \vcenter{\xymatrix@C5pt{U \ar[rr]^f \ar[dr]_-p && V \ar[dl]^-q \\ & X}}
\end{equation}
with $p,q$ plots of $X$ and $f$ a smooth map. This category and the above commutative triangle
will be used throughout the paper.

Similarly, given a pointed diffeological space $(X,x)$, we have a category $\Plot(X,x)$, called the
\textbf{pointed plot category} of $(X,x)$. The objects are the pointed plots $p:(U,0) \to (X,x)$ and the
morphisms are the commutative triangles
\begin{equation}\label{eq:pointed-plots}
 \vcenter{\xymatrix@C5pt{(U,0) \ar[rr]^f \ar[dr]_p && (V,0) \ar[dl]^q \\ & (X,x)}}
\end{equation}
with $p,q$ pointed plots of $X$ and $f$ a pointed smooth map.

A subset of a diffeological space is said to be
\textbf{$D$-open} if the preimage under every plot is open in the domain,
where the domain is equipped with the usual Euclidean topology.
The $D$-open sets form a topology called the \textbf{$D$-topology}
on the underlying set of any diffeological space.
Every smooth map between diffeological spaces is continuous when the domain
and codomain are equipped with the $D$-topology.
The $D$-topology on every smooth manifold is the usual topology.

A \textbf{diffeological vector space} is both a diffeological space and a vector space such that the addition and
the scalar multiplication maps are smooth. Given two diffeological vector spaces $V$ and $W$, we write
$L^\infty(V,W)$ for the set of all smooth linear maps $V \to W$. For any vector space, there is a smallest
diffeology making it into a diffeological vector space.
This is called the \textbf{fine diffeology}. See~\cite[Chapter~3]{I2},
\cite[Section~5]{W} and~\cite[Section~3.1]{CW4} for more on fine diffeological vector spaces.

Given a diffeological space $X$, write $F(X)$ for the free vector space on
the elements of $X$.  We write $[x]$ for the basis vector associated to $x \in X$.
There is a smallest vector space diffeology on $F(X)$
such that the canonical map $X \to F(X)$ sending $x$ to $[x]$ is smooth.
$F(X)$ with this diffeology is
called the \textbf{free diffeological vector space generated by $X$}, and it can be used to define the
diffeology on the tensor product of diffeological vector spaces. It has the universal property that every smooth
map from $X$ to a diffeological vector space $V$ extends uniquely to a smooth linear map from $F(X)$ to $V$. For more
on diffeological vector spaces, see~\cite{W,CW4}.

\subsection{Tangent spaces and tangent bundles}

Tangent spaces and tangent bundles of diffeological spaces were defined in~\cite{H}
and then studied further in~\cite{CW2,CW3}. We review the basic concepts in this section.

Given a pointed diffeological space $(X,x)$, the \textbf{tangent space} $T_x(X)$
is defined as follows.  As a vector space, it is the colimit of
the functor $\Plot(X,x) \to \Vect$ sending the commutative triangle~\eqref{eq:pointed-plots} in
Section~\ref{subsection-diff} to $f_*:T_0(U) \to T_0(V)$. We will give $T_x(X)$ the subset diffeology of the
diffeology on the tangent bundle $T^{\dvs}X$ described below.

The \textbf{tangent bundle} $TX$ of a diffeological space $X$ is defined as follows.
As a set, it is
$\coprod_{x \in X} T_x(X)$. Every plot $U \to X$ induces a canonical map $TU \to TX$. The diffeology on $TX$
generated by all such maps is called \textbf{Hector's diffeology}, and this diffeological space is denoted by $T^HX$.
It was shown in~\cite[Example~4.3]{CW2} that in general the fibrewise addition and fibrewise scalar multiplication
maps for $T^HX$ are not smooth,
i.e., that $T^HX \to X$ is not a diffeological vector pseudo-bundle in the sense
of Definition~\ref{de:dvpb}.
One corrects this by enlarging the diffeology using Proposition~\ref{prop:dvsification} below,
producing the \textbf{(internal) tangent bundle} $T^{\dvs}X \to X$ of $X$.
See~\cite[Section~4.2]{CW2} for the
conceptual differences between the two diffeologies. When $T^HX = T^{\dvs}X$, we simply write $TX$. This is
the case when $X$ is a filtered diffeological space (defined in the next paragraph, \cite[Theorem~4.12]{CW3}) or when $X$ is a homogeneous space
(\cite[Proposition~4.13]{CW3}).

When working with tangent spaces, it is more convenient to work with the \textbf{germ category} $\mathcal{G}(X,x)$
for a pointed diffeological space $(X,x)$. Its objects are pointed plots $p:(U,0) \to (X,x)$, and
its morphisms are the commutative triangles as displayed in~\eqref{eq:pointed-plots}, with $f$ a
pointed smooth germ instead of a genuine smooth map. A diffeological space $X$ is called \textbf{weakly filtered} if
each germ category $\mathcal{G}(X,x)$ is weakly filtered, i.e., every finite collection of
pointed plots factor, as germs, through a common pointed plot.
A diffeological space $X$ is called \textbf{filtered} if each germ category $\mathcal{G}(X,x)$ is filtered,
i.e., if it is weakly filtered and for any two parallel morphisms there is a morphism that coequalizes them.  This is
equivalent to requiring that every finite diagram in $\mathcal{G}(X,x)$ has a cocone.
Every diffeological group (or diffeological vector space) is weakly filtered, but not all are filtered;
see~\cite[Remark~4.20]{CW3}.

\subsection{Cotangent bundles and differential forms}\label{ss:cotangent}

Fix $k \in \N$. Given a diffeological space $X$, we have a functor $\Plot(X)^{\op} \to \Vect$ sending the commutative
triangle~\eqref{eq:plots} in Section~\ref{subsection-diff} to $\Omega^k (V) \to \Omega^k (U)$. The vector space $\Omega^k(X)$
of differential $k$-forms on $X$ is defined to be the limit of this functor.

A diffeology on $\Omega^k(X)$ is defined in~\cite[6.29]{I2}, and
we give an equivalent description here.
Note that when $U$ is an open subset of some $\R^n$, $\Omega^k(U)$ has a natural
diffeology by regarding it as the space of sections of the bundle $(\Exterior^k TU)^*$.
(Since this bundle is trivial, one can equivalently think of this as the space
$C^{\infty}(U,\, \Exterior^k(\R^n))$, as is done in~\cite{I2}.)
In~\cite{I2}, a function $V \to \Omega^k(X)$ is defined to be a plot if and only if
for each plot $p : U \to X$, the component $V \to \Omega^k(U)$ is smooth.
In other words, $\Omega^k(X)$ is defined to be $\lim \Omega^k(U)$ as
diffeological vector spaces.

As a convention, given $\omega \in \Omega^k(X)$, a plot $p:U \to X$ and $u \in U$, we write $\omega_p$ for the corresponding
form on $\Omega^k(U)$, whose value at $u$ is denoted $\omega_p(u)$.

A differential $k$-form $\omega \in \Omega^k(X)$ is said to vanish at $x \in X$ if for every pointed plot $p:(U,0) \to (X,x)$, we have
$\omega_p(0) = 0$. The space of such forms is a linear subspace of $\Omega^k(X)$, and the corresponding linear quotient is denoted by
$\Uplambda^k_x(X)$; see~\cite[6.40]{I2}. Write $\Uplambda^k X$ for $\coprod_{x \in X} \Uplambda^k_x(X)$.
There is a canonical surjective map $X \times \Omega^k(X) \to \Uplambda^k X$, and we equip the codomain with the quotient diffeology (\cite[6.45]{I2}).
By Section~\ref{sss:quotients},
the canonical map $\Uplambda^k X \to X$ is a diffeological vector pseudo-bundle in the sense of
Definition~\ref{de:dvpb}.
Moreover, $\Omega^k(X)$ can be identified with the space of sections of this bundle as diffeological vector spaces, 
by using a canonical form (called the Liouville form) obtained from a canonical form on $X \times \Omega^k(X)$;
see~\cite[6.49 and~6.50]{I2}.

The following result is new, and gives a way to calculate the space of
top-dimensional forms of a diffeological space.

\begin{prop}\label{prop:top-forms}
 Let $X$ be a diffeological space of $D$-dimension $\leq n$. Write $\overline{\Omega}^n(X)$ for the limit in the category of diffeological vector spaces
 of $\Omega^n(U)$ indexed over the full subcategory of the plot category of $X$ with objects
 the plots $U \to X$ with $\dim U= n$ (called the \textbf{$n$-plots of $X$}). Then the canonical map $\Omega^n(X) \to \overline{\Omega}^n(X)$
 is an isomorphism of diffeological vector spaces.
\end{prop}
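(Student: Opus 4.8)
The plan is to build an explicit two–sided inverse $S$ to the canonical restriction map $R\colon\Omega^n(X)\to\overline{\Omega}^n(X)$ and to check that $S$ is smooth; since $R$ is linear and smooth (being the map induced on limits by restricting the diagram $\Plot(X)^{\op}\to\Vect$ to the full subcategory of $n$-plots), this gives the asserted isomorphism of diffeological vector spaces. Throughout, the hypothesis $D\text{-dim}(X)\le n$ is used only in the form: the diffeology on $X$ is generated by plots whose domains have dimension $\le n$, so that every plot $p\colon U\to X$ factors, on some neighborhood $W$ of any prescribed point of $U$, as $p|_W=q\circ g$ with $q\colon V\to X$ a plot, $\dim V\le n$, and $g\colon W\to V$ smooth.

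First I would prove injectivity of $R$, which in fact does not use the dimension hypothesis: an $n$-form on a Euclidean open set is determined by its pullbacks along maps out of $\R^n$, because $\omega_p(u)(\xi_1,\dots,\xi_n)=(h^*\omega_p)(0)(e_1,\dots,e_n)$ for any smooth $h$ defined near $0\in\R^n$ with $h(0)=u$ and $dh_0(e_i)=\xi_i$, and $p\circ h$ is an $n$-plot. Hence if $(\omega_p)\in\Omega^n(X)$ restricts to $0$ on every $n$-plot, then $\omega_p=0$ for every plot $p$, so $(\omega_p)=0$.

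For surjectivity I would take a compatible family $\eta=(\eta_q)$ indexed by the $n$-plots and, for an arbitrary plot $p\colon U\to X$, define a candidate form by the pointwise recipe
\[
  \omega_p(u)(\xi_1,\dots,\xi_n)\;:=\;\eta_{p\circ h}(0)(e_1,\dots,e_n),
\]
where $h$ is any smooth map from a neighborhood of $0$ in $\R^n$ into $U$ with $h(0)=u$ and $dh_0(e_i)=\xi_i$. The essential point is independence of $h$: shrinking $h$ so its image lies in a factorization neighborhood $W$ with $p|_W=q\circ g$ as above, I pad $q$ to the $n$-plot $\tilde q:=q\circ\mathrm{pr}_V\colon V\times\R^{\,n-\dim V}\to X$, note that $\iota\circ g\circ h$ (with $\iota\colon V\hookrightarrow V\times\R^{\,n-\dim V}$ the zero section, so $\tilde q\circ\iota=q$) is a morphism of $n$-plots $p\circ h\to\tilde q$, and apply compatibility of $\eta$ to get
\[
  \eta_{p\circ h}(0)(e_1,\dots,e_n)=\eta_{\tilde q}\big(\iota g(u)\big)\big(d\iota\,dg_u\,\xi_1,\dots,d\iota\,dg_u\,\xi_n\big),
\]
an expression with no reference to $h$ (and, a posteriori, none to the chosen factorization). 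Reading this identity back, $\omega_p$ agrees on $W$ with the smooth form $(\iota\circ g)^*\eta_{\tilde q}$, so it is a genuine smooth $n$-form on $U$ (glued from such local descriptions), and it is clearly alternating and multilinear in the $\xi_i$. Compatibility of $(\omega_p)$ then follows by composing realizing maps $h$ with a given plot morphism $f$ and comparing the two formulas, which yields $\omega_p=f^*\omega_{p'}$ whenever $p=p'\circ f$; and $R$ carries $(\omega_p)$ back to $\eta$ because for an $n$-plot $q$ and the affine map $h(x)=u+\sum x_i\xi_i$, $h$ is a morphism of $n$-plots $q\circ h\to q$, giving $\omega_q(u)(\xi)=(h^*\eta_q)(0)(e_\bullet)=\eta_q(u)(\xi)$. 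This defines $S(\eta):=(\omega_p)$, inverse to $R$.

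It remains to see that $S$ is smooth. Since $\Omega^n(X)$ carries the limit diffeology over $\Plot(X)$, it suffices to show that for every plot $c\colon V'\to\overline{\Omega}^n(X)$ and every plot $p\colon U\to X$, the composite $V'\to\Omega^n(U)$ obtained from $S\circ c$ is smooth; because the diffeology on $\Omega^n(U)\cong C^\infty\!\big(U,\Exterior^n(\R^m)\big)$ is local in $U$, this can be tested on a cover of $U$, and on a piece $W$ with $p|_W=q\circ g$ the map equals $(\iota\circ g)^*\circ(\text{the }\tilde q\text{-component of }c)$, which is smooth since the $\tilde q$-component of $c$ is smooth ($\tilde q$ is an $n$-plot and $c$ is a plot of the limit $\overline{\Omega}^n(X)$) and pullback of forms along a fixed smooth map is a smooth operation. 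I expect the proof of independence of $h$ in the surjectivity step to be the main obstacle: this is precisely where the $D$-dimension hypothesis and the padding trick are genuinely needed, and it is what upgrades the pointwise recipe to an honest smooth differential form rather than a merely fibrewise gadget.
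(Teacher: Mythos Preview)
Your argument is correct and follows essentially the same route as the paper: factor an arbitrary plot locally through an $n$-plot (using the $D$-dimension hypothesis, with the padding you make explicit), pull back the given $n$-form along that factorization, and check well-definedness and smoothness by reducing to the compatibility of $\eta$ on $n$-plots. The only cosmetic difference is that the paper defines the extended form patchwise as $f_i^*(\omega_{p_i})$ on a cover and then checks agreement on overlaps, whereas you define it pointwise via the auxiliary germ $h$ and then recognize it locally as $(\iota\circ g)^*\eta_{\tilde q}$; the content is identical.
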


Notice that this proposition refines condition (2) of~\cite[6.41]{I2} under the $D$-dimension assumption.

\begin{proof}
 Injectivity follows from~\cite[6.37]{I2}.

 For surjectivity, we need to show that every $\omega \in \overline{\Omega}^n(X)$ defines an $n$-form on $X$.
 Fix a compatible family of $n$-forms $\omega_p \in \Omega^n(U)$ for each
 $n$-plot $p : U \to X$.
 We must extend this to a compatible family indexed by all plots.
 Given a plot $q : V \to X$, the assumption that $X$ has $D$-dimension $\leq n$
 implies that $V$ has an open cover $\{V_i\}$ such that for each $i$,
 $q|_{V_i} : V_i \to X$ factors as $V_i \xrightarrow{f_i} U_i \xrightarrow{p_i} X$,
 where $\dim U_i = n$.
 We will show that the forms $f_i^*(\omega_{p_i})$ agree on the intersections
 and therefore give a well-defined form $\omega_q$ on $V$.
 Generalizing slightly, we show that given a commutative square
 \[
  \xymatrix{V \ar[r]^{f_1} \ar[d]_{f_2} & U_1 \ar[d]^{p_1} \\ U_2 \ar[r]_{p_2} & X}
 \]
 with $p_1$ and $p_2$ $n$-plots of $X$ and $V$ any open subset of Euclidean space,
 we have $f_1^*(\omega_{p_1}) = f_2^*(\omega_{p_2})$.
 By~\cite[6.37]{I2}, this is true if it is true after pullback along any
 smooth map $U \to V$, where $\dim U = n$, and this follows from the assumption
 that $\omega$ is compatible with the $n$-plots of $X$.
 The same reasoning shows that the new family extends the original family
 and is compatible with all plots of $X$.

 It is straightforward to see that the canonical map $\Omega^n(X) \to \overline{\Omega}^n(X)$ is smooth.
 To see that the inverse map is also smooth, we start with a plot $W \to \overline{\Omega}^n(X)$
 and show that it smoothly factors through the canonical map.
 So, for each plot $q : V \to X$, we must produce a smooth map $W \to \Omega^n(V)$.
 As in the previous paragraph,
 by assumption we have that there exists an open cover $\{V_i\}$ of $V$ such that, for each $i$,
 $q|_{V_i}$ factors as $V_i \xrightarrow{f_i} U_i \xrightarrow{p_i} X$, where $\dim U_i = n$.
 Consider the smooth composite $W \to \overline{\Omega}^n(X) \to \Omega^n(U_i) \to \Omega^n(V_i)$,
 for each $i$.  Again following the argument of the previous paragraph, these match up
 to give a smooth map $W \to \Omega^n(V)$, and these form a compatible family
 extending the original family defined on $n$-plots.
 That is, we get a smooth map $W \to \Omega^n(X)$ whose composite with the
 canonical map gives the original plot $W \to \overline{\Omega}^n(X)$.
\end{proof}

\begin{rem}
 Recall from~\cite[6.37]{I2} that two $k$-forms on a diffeological space $X$ are equal if they coincide
 when evaluated at every $k$-plot of $X$.
 However, if $k$ is less than the $D$-dimension of $X$,
 then a compatible family of $k$-forms on the $k$-plots of $X$ may \emph{not} give rise to a $k$-form on $X$.
 For example, a $0$-form on $\R$ is a smooth function $\R \to \R$, while a compatible family of 
 $0$-forms on the $0$-plots of $\R$ gives rise to an arbitrary function $\R \to \R$. 
 In summary, a $k$-form on a diffeological space is determined by the $k$-plots, but not defined by 
 the $k$-plots, unless $k$ is greater than or equal to the $D$-dimension of the space.
 Notice that this does not conflict with the content of~\cite[6.37]{I2}; see the last paragraph 
 before the proof there.
\end{rem}

\section{Operations on diffeological vector pseudo-bundles}\label{se:operations}

In this section, we provide a general theory of diffeological vector pseudo-bundles.
Section~\ref{ss:dvpb} gives the basic definitions and then proves
fundamental results about the categories of all diffeological vector pseudo-bundles
as well as those over a given base.
The construction of limits and colimits in Theorem~\ref{thm:DVPB-bicomplete} is
used in many places.
Section~\ref{ss:fibrewise} describes a variety of fibrewise operations
on diffeological vector pseudo-bundles over a common base, extending
work of~\cite{V} and~\cite{P}.
We will use these tools when discussing differential forms in Section~\ref{se:duality}.

\subsection{Diffeological vector pseudo-bundles}\label{ss:dvpb}

In~\cite{V}, Vincent defined ``regular vector bundle.''
While not literally the same, Vincent's notion is equivalent to the
notion of a ``diffeological vector space over $X$'',
as defined in~\cite[Definition~4.5]{CW2}.
We recall the latter definition here, but use the terminology of~\cite{P}
to avoid confusion with ordinary diffeological vector spaces.

\begin{de}\label{de:dvpb}
 Let $B$ be a diffeological space. A \textbf{diffeological vector pseudo-bundle} over $B$ is a diffeological space
 $E$, a smooth map $\pi:E \to B$ and a vector space structure on each of the fibres $E_b := \pi^{-1}(b)$ such that
 the fibrewise addition map $E \times_B E \to E$, the fibrewise scalar multiplication map $\R \times E \to E$ and the zero
 section $\sigma:B \to E$ are all smooth. We call $B$ the base space, $E$ the total space, and $E_b$ the fibre at $b$.

 A diffeological vector pseudo-bundle $\pi:E \to B$ is called \textbf{trivial} if there is a diffeomorphism
 $f:E \to B \times V$ for some diffeological vector space $V$ such that $\pi = \Pr_1 \circ f$ and the restriction $f_b:E_b \to V$
 of $f$ is linear for each $b \in B$. It is called \textbf{locally trivial} if there exists a $D$-open cover $\{B_i\}$ of
 $B$ such that the restriction of $\pi$ to $\pi^{-1}(B_i) \to B_i$ is trivial for each $i$.
\end{de}

As noted in~\cite[Remark~4.12]{CW2}, a diffeological vector pseudo-bundle $\pi:E \to B$ is not necessarily a diffeological bundle (in
the sense of~\cite{I1,I2}) or a fibration (in the sense of~\cite{CW1}). The essential difference between diffeological vector
pseudo-bundles and classical vector bundles is that we remove the local triviality condition.
We do this so that we can include tangent bundles, which are often not locally trivial.

\begin{ex}
 Let $X$ be a diffeological space. Then $\pi:T^\dvs X \to X$ is a diffeological vector pseudo-bundle, where
 $T^\dvs X$ is the tangent bundle of $X$ equipped with the dvs diffeology; see~\cite[Section~4.1]{CW2}
 for details.
\end{ex}

The following result is very useful:

\begin{prop}[{\cite[Proposition~4.6]{CW2}}]\label{prop:dvsification}
 Let $\pi:E \to B$ be a smooth map between diffeological spaces, and suppose that each fibre of $\pi$ has a vector
 space structure.
 Then there is a smallest diffeology on $E$ which contains the given diffeology and which makes $\pi$ into a
 diffeological vector pseudo-bundle over $B$.
\end{prop}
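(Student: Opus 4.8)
The plan is to construct the desired diffeology on $E$ as an intersection of diffeologies, using the fact (stated just above in the excerpt) that the intersection of any family of diffeologies on a fixed set is again a diffeology. First I would let $\mathcal{F}$ be the collection of all diffeologies $\mathcal{D}$ on the set $E$ such that $\mathcal{D}$ contains the given diffeology on $E$ and such that, with respect to $\mathcal{D}$, the map $\pi$ is smooth, the fibrewise addition $E \times_B E \to E$ is smooth, the fibrewise scalar multiplication $\R \times E \to E$ is smooth, and the zero section $\sigma : B \to E$ is smooth. This family is nonempty: the indiscrete diffeology on $E$ (in which every map into $E$ is a plot) certainly contains the given diffeology and makes every map out of any diffeological space into $E$ smooth, in particular all four structure maps. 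I would then set $\mathcal{D}_0 := \bigcap_{\mathcal{D} \in \mathcal{F}} \mathcal{D}$, which is a diffeology on $E$ by the intersection property, and it obviously contains the given diffeology since each member of $\mathcal{F}$ does.

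The substance of the argument is to check that $\mathcal{D}_0$ itself makes $\pi$ into a diffeological vector pseudo-bundle, i.e. that $\mathcal{D}_0 \in \mathcal{F}$; minimality then follows immediately since $\mathcal{D}_0$ is contained in every member of $\mathcal{F}$. To see $\pi$ is smooth for $\mathcal{D}_0$: if $p : U \to E$ is a plot in $\mathcal{D}_0$, then $p$ is a plot in every $\mathcal{D} \in \mathcal{F}$, so $\pi \circ p$ is a plot of $B$ for each such $\mathcal{D}$ and hence is a plot of $B$. For the zero section, $\sigma : B \to E$ is smooth with respect to $\mathcal{D}$ for each $\mathcal{D} \in \mathcal{F}$, which by definition means $\sigma$ sends plots of $B$ to elements of $\mathcal{D}$; since this holds for every member of the family, $\sigma$ sends plots of $B$ into $\mathcal{D}_0$. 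The scalar multiplication and addition maps are handled the same way, the only extra point being that $E \times_B E$ must be given the subset diffeology inside $E \times E$, which is determined functorially by $\mathcal{D}_0$; a map into $E \times_B E$ is a plot precisely when its two components are plots of $E$ landing over the same plot of $B$, so a plot of $E \times_B E$ (with respect to $\mathcal{D}_0$) is in particular a plot of $E \times_B E$ with respect to each larger diffeology $\mathcal{D}$, whence its image under the addition map lies in $\mathcal{D}$, and therefore in $\mathcal{D}_0$.

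The one genuine subtlety — and the step I expect to require the most care — is this last point about the fibre product, because the diffeology on $E \times_B E$ changes as we change the diffeology on $E$, so the condition ``fibrewise addition is smooth'' is not obviously preserved under intersection in the naive way one preserves properties of maps with a fixed source. The key observation that resolves it is that enlarging the diffeology on $E$ only enlarges the subset diffeology on $E \times_B E \subseteq E \times E$; concretely, if $q : W \to E \times_B E$ is a plot with respect to $\mathcal{D}_0$, then composing with the two projections gives plots $W \to E$ in $\mathcal{D}_0 \subseteq \mathcal{D}$ whose composites with $\pi$ agree, so $q$ is a plot of $E \times_B E$ with respect to $\mathcal{D}$; applying smoothness of addition for $\mathcal{D}$ gives that the composite $W \to E$ lies in $\mathcal{D}$, and letting $\mathcal{D}$ range over $\mathcal{F}$ shows it lies in $\mathcal{D}_0$. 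Once this is in hand the verification is complete, and since $\mathcal{D}_0 \subseteq \mathcal{D}$ for all $\mathcal{D} \in \mathcal{F}$, it is the smallest diffeology with the required properties. (This argument is, of course, exactly the same pattern used to construct Hector's diffeology and its dvs-ification, and one could alternatively phrase it as generating the diffeology by the given plots together with the plots forced by the four structure maps, iterating to a fixed point; the intersection formulation is cleaner and avoids the transfinite bookkeeping.)
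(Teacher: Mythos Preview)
Your intersection argument is correct, and you have handled the one genuine subtlety (that the diffeology on $E \times_B E$ varies with the diffeology on $E$) properly: enlarging the diffeology on $E$ can only enlarge the subset diffeology on the fibre product, so smoothness of addition passes to the intersection.

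The paper does not actually prove this proposition here; it is quoted from~\cite{CW2}. What the paper \emph{does} add, immediately after the statement, is a concrete description of the resulting diffeology: every plot in the new diffeology is locally of the form $u \mapsto \sum_{i=1}^k r_i(u)\, q_i(u)$ for finitely many smooth $r_i : U \to \R$ and plots $q_i : U \to E$ in the original diffeology lying over a common plot $q : U \to B$ (with the $k=0$ case giving the zero section). This is complementary to your argument: you establish existence of the smallest diffeology abstractly, whereas the paper identifies it explicitly. In particular, the paper's description shows that the ``iterate to a fixed point'' alternative you mention in your final parenthetical actually terminates in a single step---finite fibrewise linear combinations of the original plots are already closed under the structure maps, so no transfinite bookkeeping is needed. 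Your abstract argument gives no such information, but it is of course sufficient for the bare statement. The explicit form is what gets used later in the paper (e.g.\ in Proposition~\ref{pr:free} and in the proof of Theorem~\ref{thm:recover-forms}).
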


We call $\pi$ a \textbf{diffeological vector pre-bundle}, and call the
operation described by the proposition \textbf{dvsification}.
By~\cite[Section~4.2]{CW2}, it can be seen as a left adjoint to a forgetful functor.

Concretely, every plot in the new diffeology of the total space is locally of the following form. Given a plot
$q:U \to B$, plots $q_1,\ldots,q_k:U \to E$ such that $\pi \circ q_i = q$ for all $i$, and plots
$r_1,\ldots,r_k:U \to \R$, the linear combination $U \to E$ sending $u$ to $\sum_i r_i(u) q_i(u)$ in $E_{q(u)}$ is
a plot in the new diffeology. Note that when $k=0$, this is the composite of the plot $q$ of $B$ with the zero
section of $\pi$.

Note that in the previous proposition, the original diffeology on the fibre $E_b$ might differ from the new
diffeology; see~\cite[Example~4.3]{CW2} for an example. But if each $E_b$ is already a diffeological vector space
in the original diffeology, then the new diffeology coincides with the original one on the fibre.
More generally, dvsification commutes with pullbacks; see~\cite[Section~4.2]{CW2}.

\begin{de}
 Let $\pi_i:E_i \to B_i$ be diffeological vector pseudo-bundles for $i=1,2$.
 A \textbf{bundle map} from $\pi_1$ to $\pi_2$ consists of a pair of smooth
 maps $f:E_1 \to E_2$ and $g : B_1 \to B_2$ such that the diagram
 \begin{equation}\label{eq:comm-square-DVPB}
  \cxymatrix{E_1 \ar[r]^f \ar[d]_{\pi_1} & E_2 \ar[d]^{\pi_2} \\ B_1 \ar[r]_g & B_2}
 \end{equation}
 commutes and for each $b \in B_1$, the map $E_{1,b} \to E_{2,g(b)}$ induced by $f$ is linear.
 When the bundles are clear, we denote this bundle map by $f : E_1 \to E_2$.
 In the case that
 $B_1 = B_2 =: B$ and $g = 1_B$, we say that $f$ is a \textbf{bundle map over $B$}.
 Bundle maps are composed in the obvious way.
\end{de}

The collection of diffeological vector pseudo-bundles with bundle maps forms a
category denoted $\DVPB$.
We equip the set $\DVPB(\pi_1, \pi_2)$ with the subset diffeology from the
product $C^{\infty}(E_1, E_2) \times C^{\infty}(B_1, B_2)$.
It is easy to check that with this choice, $\DVPB$ is enriched over $\Diff$.
For a fixed diffeological space $B$, the subcategory $\DVPB_B$ consists of diffeological
vector pseudo-bundles over $B$ together with bundle maps over $B$.
In $\DVPB_B$, the hom-sets are vector spaces under
pointwise addition and pointwise scalar multiplication, and composition is bilinear.
In particular, $\DVPB_B$ is $\Ab$-enriched.
Moreover, when $\DVPB_B(\pi_1,\pi_2)$ is equipped with the subset diffeology from
$C^{\infty}(E_1, E_2)$ (or, equivalently, from $\DVPB(\pi_1, \pi_2)$), it is a diffeological vector space,
and one can check that $\DVPB_B$ is enriched over the category of diffeological vector spaces.

Given a diffeological vector pseudo-bundle $\pi:E \to B$, a \textbf{section} is a smooth map $s:B \to E$
such that $\pi \circ s = 1_B$. The set $\Gamma(\pi)$ of all sections of $\pi$ is a vector space under
pointwise addition and pointwise scalar multiplication. We equip $\Gamma(\pi)$ with
the subset diffeology of $C^\infty(B,E)$. This makes $\Gamma(\pi)$ into a diffeological vector space.
Moreover, $\Gamma$ is a functor from
$\DVPB_B$ to the category of diffeological vector spaces.
For a trivial bundle $\Pr_1:B \times V \to B$, we have $\Gamma(\Pr_1) = C^\infty(B,V)$.

Let $\pi: E \to B$ be a diffeological vector pseudo-bundle,
and let $f: B' \to B$ be a smooth map.
Then the pullback $f^*(\pi) : f^*(E) \to B'$ of $\pi$ in $\Diff$
is also a diffeological vector pseudo-bundle.
Here $f^*(E) := \{(b',e) \in B' \times E \mid f(b')=\pi(e)\}$
and $f^*(\pi)$ is the projection map.
The fibre of $f^*(\pi)$ over $b'$ is isomorphic to $E_{f(b')}$
as diffeological vector spaces for each $b' \in B'$.
Note that pullback gives a functor $f^* : \DVPB_B \to \DVPB_{B'}$.
As we will do for many functors, we write $f^*(\pi)$ for the bundle and
$f^*(E)$ for the total space, even though the latter of course
depends on $\pi$ as well.

The pullback of a trivial bundle is trivial,
and the pullback of a locally trivial bundle is locally trivial.

Given a pullback diagram
\[
 \xymatrix{E' \ar[r] \ar[d]_{\pi'} & E \ar[d]^\pi \\ B' \ar[r]_f & B}
\]
with $\pi'$ and $\pi$ diffeological vector pseudo-bundles, we have a smooth linear map
$\Gamma(\pi) \to \Gamma(\pi')$ sending $s:B \to E$ to $B' \to E'=f^*(E)$
defined by $b' \mapsto (b',s(f(b')))$. When $f$ is surjective, this map is injective.

The categories $\DVPB$ and $\DVPB_B$ have good properties.

\begin{thm}\label{thm:DVPB-bicomplete}
 The category $\DVPB$ is complete and cocomplete.
 For each diffeological space $B$, the category $\DVPB_B$ is complete, cocomplete
 and additive.
\end{thm}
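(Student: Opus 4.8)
The plan is to build limits and colimits explicitly — adjoint functor theorems would settle existence, but the paper needs the explicit descriptions later — and to read off additivity of $\DVPB_B$ afterwards. The organizing observation is that the underlying-base functor $V\colon\DVPB\to\Diff$, $(\pi\colon E\to B)\mapsto B$, has the zero-bundle functor $B\mapsto(B\times 0\to B)$ as \emph{both} a left and a right adjoint, so $V$ preserves all limits and colimits; hence in any (co)limit of diffeological vector pseudo-bundles the base is just the (co)limit in $\Diff$ of the base spaces, and the real work is to produce the total space, with its fibrewise vector-space structure, over that base. For limits the total space is again the corresponding limit in $\Diff$; for colimits it is not, since a colimit of vector spaces is not computed on underlying sets, so the construction must pass through a fibrewise colimit in $\Vect$ followed by dvsification (Proposition~\ref{prop:dvsification}). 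I expect that last step to be the main obstacle.

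\emph{Limits.} Given $D\colon I\to\DVPB$ with $D(i)=(\pi_i\colon E_i\to B_i)$, set $B:=\lim_I B_i$ and $E:=\lim_I E_i$ in $\Diff$, with induced map $\pi\colon E\to B$. For $b=(b_i)\in B$ one has $\pi^{-1}(b)=\lim_i(E_i)_{b_i}$ as a set, to which I give the limit vector-space structure. The conditions of Definition~\ref{de:dvpb} are checked directly: a plot of $E\times_B E$ is a compatible family of plots of the $E_i\times_{B_i}E_i$, so composing with the fibrewise additions yields a compatible family of plots of the $E_i$, i.e.\ a plot of $E$, whence $E\times_B E\to E$ is smooth, and likewise $\R\times E\to E$ and the zero section. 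The universal property follows from those of the $\Diff$-limits, since a cone in $\DVPB$ is in particular a cone on base and total spaces and the mediating map is fibrewise linear by construction. For $\DVPB_B$ the same recipe works with the base held fixed at $B$: the total space of the limit is the fibre product over $B$ of the total spaces of the diagram, with the subset diffeology from their product, again a diffeological vector pseudo-bundle; in particular the binary product of $E_1,E_2\to B$ is $E_1\times_B E_2\subseteq E_1\times E_2$.

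\emph{Colimits.} Coproducts in $\DVPB$ are given by disjoint unions of total and base spaces — distinct base points keep the fibres equal to those of the summands — so it remains to construct coequalizers in $\DVPB$ and both coproducts and coequalizers in $\DVPB_B$; I would treat all of these by one mechanism. First form the colimit $B$ of the base spaces in $\Diff$ (for colimits in $\DVPB_B$ the base stays $B$; for a coequalizer in $\DVPB$ of two bundle maps it is the coequalizer of their base maps). Then build the underlying set of the total space by applying, fibrewise over $B$, the appropriate colimit in $\Vect$: a direct sum for a coproduct in $\DVPB_B$, and for a coequalizer a quotient of a direct sum by the span of the difference elements $\iota(f(\xi))-\iota(g(\xi))$; when the two base maps differ, one first transports the fibres of the target to sit over $B$, forming direct sums over the fibres of the map $B_2\to B$, and only then imposes the relations. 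Equip this set — with its map to $B$ and its fibre vector-space structures — with the diffeology generated by the canonical maps from the $E_i$; this is a diffeological vector pre-bundle, to which I apply dvsification. Its universal property combines three ingredients: the universal property of the fibrewise $\Vect$-colimits supplies the mediating map of sets, fibrewise linear; the description of the generated diffeology shows this map is smooth once it is smooth after precomposition with each $E_i$; and the universal property of dvsification, as left adjoint to the forgetful functor from pseudo-bundles to pre-bundles over a fixed base (applied after a pullback when the target lies over a different base), upgrades the smooth pre-bundle map to a bundle map. Together with the disjoint-union coproducts, this yields cocompleteness of $\DVPB$ and of $\DVPB_B$. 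The delicate point — and the main obstacle — is precisely this last verification: that the generated pre-bundle diffeology, once dvsified, still has the expected underlying set and fibres and supports the mapping property, together with the bookkeeping of transporting fibres along a non-identity base map in the $\DVPB$ case.

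\emph{Additivity of $\DVPB_B$.} Its hom-sets are abelian groups with bilinear composition, so $\DVPB_B$ is $\Ab$-enriched. The bundle $B\times 0\to B$ is a zero object, since $0$ admits a unique linear map to or from any vector space, which makes the zero section and the zero map the unique bundle maps into and out of it. An $\Ab$-enriched category with a zero object and binary products has finite biproducts, and the binary products were produced above, so $\DVPB_B$ is additive.
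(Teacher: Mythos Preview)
Your proposal is correct and follows the same strategy as the paper: limits via the $\Diff$-limits of total and base spaces, colimits via fibrewise $\Vect$-colimits followed by dvsification, and additivity from $\Ab$-enrichment plus finite products. Two organizational differences are worth flagging. First, you open with a conceptual point the paper does not make explicit, that the zero-bundle functor is both left and right adjoint to the base-space functor, which cleanly explains why bases of (co)limits are (co)limits of bases. Second, for colimits in $\DVPB$ you reduce to coproducts (disjoint unions) plus coequalizers and build the coequalizer fibre by hand as $\bigoplus_{b'\mapsto b}(E_2)_{b'}$ modulo the relations; the paper instead treats an arbitrary small diagram $F$ at once, describing the fibre over $b$ as the $\Vect$-colimit of the functor $\cC_b\to\Vect$, $(i,a)\mapsto tF(i)_a$, where $\cC_b$ is the fibre over $b$ of the category of elements of $bF$. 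Both yield the same bundle, but the paper's uniform description is invoked later verbatim (e.g.\ to identify $T^k_x X$ as a specific colimit over the pointed plot category), so it is worth knowing that form. Your $\DVPB_B$ limits as fibre products over $B$ agree with the paper's recipe of taking the $\DVPB$-limit and pulling back along the canonical $B\to\lim_i B$.
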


\begin{proof}
 The first claim was proved in~\cite[Section~4.2]{CW2}.
 We sketch the idea of the proof since we will use a similar approach
 for the second claim and will also make use of the explicit construction later.
 Given a functor $F:\mathcal{I} \to \DVPB$ from a small category, write $tF$ and $bF$ for the functors
 sending $i \in \Obj(\mathcal{I})$ to the total space and the base space of $F(i)$, respectively.

 The limit of $F$ is easy to construct: it is given by the canonical map
 $\lim tF \to \lim bF$, which is a diffeological vector pseudo-bundle.

 The colimit of $F$ is rather complicated. The colimit $B$ of $bF$ serves as the base space of the colimit of $F$.
 The total space of the colimit is constructed as follows.
 Let $\cC$ be the category of elements of $bF$,
 whose objects are pairs $(i,a)$ with $i \in \Obj(\mathcal{I})$ and $a \in bF(i)$, and whose morphisms $(i,a) \to (i',a')$
 are the morphisms $i \to i'$ in $\mathcal{I}$ such that the induced map
 $bF(i) \to bF(i')$ sends $a$ to $a'$.
 For each $b \in B$, consider the full subcategory $\cC_b$ of $\cC$
 consisting of the objects $(i,a)$ such that $a \in bF(i)$ is sent to $b$
 by the cocone map $bF(i) \to B$.
 There is a functor $\cC_b \to \Vect$ sending $(i,a)$ to the fibre $tF(i)_a$.
 Define the fibre $E_b$ over $b$ to be the colimit of this functor.
 Let $E := \coprod_b E_b$. There is a natural map $E \to B$.
 Equip $E$ with the dvsification of the smallest diffeology such that all of the canonical maps from $tF(i) \to bF(i)$ to $E \to B$
 are smooth. Then $E \to B$ is the desired colimit of $F$.

 For the second claim, let $F:\cI \to \DVPB_B$ be a functor
 from a small category, and again write $tF$ for the functor sending
 $i \in \Obj(\mathcal{I})$ to the total space of $F(i)$.

 Let $E' \to B'$ be the limit of $F$ in the category $\DVPB$.
 There is a natural map $k : B \to B' = \lim_i B$ induced by the cone
 sending each $i$ to $1_B$.
 One can check directly that taking $E \to B$ to be the pullback of
 $E' \to B'$ along $k$ gives a limit of $F$ in $\DVPB_B$.

 Constructing the colimit of $F$ is more involved.
 For each $b \in B$, consider the functor $\cI \to \Vect$ sending
 $i$ to $tF(i)_b$, the fibre of $F(i)$ above $b$.
 Define $E_b$ to be the colimit of this functor, and let $E := \coprod_b E_b$,
 which comes with a natural map $E \to B$.
 Equip $E$ with the dvsification of the smallest diffeology such that
 all of the canonical maps $tF(i) \to E$ (over $B$) are smooth.
 To see that this is the colimit of $F$, let $C \to B$ be in $\DVPB_B$
 and consider a cocone consisting of maps $tF(i) \to C$ over $B$ for each $i$,
 commuting with the maps in the diagram.
 For each $b \in B$, we get a cocone of linear maps $tF(i)_b \to C_b$
 which therefore induce a linear map $E_b := \colim_i tF(i)_b \to C_b$.
 Taking together, these give a function $E \to C$ over $B$, unique
 subject to the condition that it is a (set-theoretic) map of cocones.
 Since it is a map of cocones, it is smooth with respect to the diffeology
 generated by the canonical maps $tF(i) \to E$.
 And since $C \to B$ is a diffeological vector pseudo-bundle, it remains
 smooth when $E$ is equipped with the dvsification of this diffeology.
 Therefore $E \to B$ is the desired colimit of $F$.

 We mentioned above that $\DVPB_B$ is $\Ab$-enriched, and since it
 has finite products, it is additive.
\end{proof}

\begin{rem}\label{rem:connected-limit}
 Let $F : \cI \to \DVPB_B$ be a functor with $\cI$ connected.
 Then the natural map $B \to B' := \lim_i B$ is a diffeomorphism,
 so the limit in $\DVPB_B$ is also the limit in $\DVPB$.
 This is illustrated with the discussion of kernels in
 Section~\ref{sse:subbundle-kernel}.
\end{rem}

\begin{rem}
 While the category $\DVPB_B$ is additive, it is not abelian.
 We'll see below that it has kernels and cokernels, but
 in general not every monic is a kernel, nor is every epi a cokernel.
 Taking $B$ to be a point, this specializes to the fact that the
 category of diffeological vector spaces and smooth linear maps is
 additive but not abelian, which was observed in~\cite[Remark~3.18]{W}.
\end{rem}

We next observe that colimits in $\DVPB$ are enriched colimits.

\begin{prop}\label{prop:colimit-enriched}
 Let $F:\cI \to \DVPB$ be a functor and let $\pi': E' \to B'$ be a diffeological
 vector pseudo-bundle. Then the natural bijection
\[
  \DVPB(\colim_{i \in \cI} F, \pi') \cong \lim_{i \in \cI} \DVPB(F(i), \pi') ,
\]
 is a diffeomorphism.
\end{prop}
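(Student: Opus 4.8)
The underlying bijection is the one coming from Theorem~\ref{thm:DVPB-bicomplete}: by the universal property of the colimit, a bundle map $\colim_\cI F \to \pi'$ is the same as a cocone on $F$ with vertex $\pi'$, i.e.\ a compatible family of bundle maps $F(i) \to \pi'$, which is precisely an element of $\lim_\cI \DVPB(F(i), \pi')$; so we only need to show that this bijection and its inverse are smooth. The forward direction is immediate: precomposition with each canonical map $\kappa_i \colon F(i) \to \colim_\cI F$ is a smooth map $\DVPB(\colim_\cI F, \pi') \to \DVPB(F(i), \pi')$ because $\DVPB$ is $\Diff$-enriched, and a plot of $\lim_\cI \DVPB(F(i), \pi')$ is exactly a family of plots into the $\DVPB(F(i), \pi')$ compatible with the transition maps.

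For the inverse, I would start with a plot $f \colon W \to \lim_\cI \DVPB(F(i), \pi')$, write $f_i$ for its components and $g \colon W \to \DVPB(\colim_\cI F, \pi')$ for the corresponding function, and denote the total and base spaces of $\colim_\cI F$ by $E$ and $B$. Since the hom-set carries the subset diffeology from $C^\infty(E, E') \times C^\infty(B, B')$ and every $g(w)$ is a bundle map, it suffices to show that the two components of $g$, namely $W \to C^\infty(E, E')$ and $W \to C^\infty(B, B')$, are plots; by cartesian closedness of $\Diff$ this amounts to showing that $W \times E \to E'$ and $W \times B \to B'$ are smooth. The base-space statement is easy: in the construction of Theorem~\ref{thm:DVPB-bicomplete}, $B = \colim_\cI bF$ in $\Diff$, so $W \times B = \colim_\cI (W \times bF(i))$ and smoothness of $W \times B \to B'$ reduces to that of the maps $W \times bF(i) \to B'$, each of which is the base component of the plot $f_i$.

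The main obstacle is the total-space statement, since $E$ is \emph{not} $\colim_\cI tF$ and so cartesian closedness does not apply directly: $E$ carries the dvsification of the diffeology generated by the canonical maps $t\kappa_i \colon tF(i) \to E$, with fibrewise-colimit vector-space structure. The plan is to use the concrete description of plots following Proposition~\ref{prop:dvsification}: a plot of $W \times E$ is locally of the form $v \mapsto (a(v), \sum_j r_j(v)\, t\kappa_{i_j}(c_j(v)))$ with $a$, the $c_j$ and the $r_j$ plots and all the points $t\kappa_{i_j}(c_j(v))$ in one fibre $E_{q(v)}$ over a plot $q \colon V \to B$. Applying the fibrewise-linear map $g(a(v))$ together with the cocone identity $t(g(w)) \circ t\kappa_i = t(f_i(w))$ turns this into $\sum_j r_j(v)\, t(f_{i_j}(a(v)))(c_j(v))$; each summand is a plot of $E'$ because the total component of $f_{i_j}$ is a plot of $C^\infty(tF(i_j), E')$, all the summands lie over a common plot of $B'$ (the one coming from the base component of $g$, already known to be smooth), and since $\pi'$ is a diffeological vector pseudo-bundle a fibrewise linear combination of plots over a common base plot, with coefficients that are plots $V \to \R$, is again a plot. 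This yields smoothness of $W \times E \to E'$ and completes the proof. The only point needing care is that the local descriptions of plots of $W \times E$ coming in turn from the product diffeology, the diffeology generated by the $t\kappa_i$, and the dvsification must be combined on a common refinement of open covers; this is routine.
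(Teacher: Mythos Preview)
Your proposal is correct and follows essentially the same route as the paper's proof: both reduce to showing smoothness of the two components landing in $C^\infty(E,E')$ and $C^\infty(B,B')$, handle the base component via $B=\colim_\cI bF$ and cartesian closedness, and handle the total component by using the explicit description of plots in the dvsified diffeology together with the fact that $\pi'$ is a diffeological vector pseudo-bundle so that fibrewise linear combinations of smooth maps over a common base plot remain smooth. Your write-up is in places more explicit than the paper's (e.g.\ spelling out the local form of a plot of $W\times E$), but the argument is the same.
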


\begin{proof}
Write $\pi : E \to B$ for the colimit of $F$.
This is equipped with a cone map $F(i) \to \pi$ for each $i \in I$.
Precomposing with the cone map gives a smooth map
$\DVPB(\pi, \pi') \to \DVPB(F(i), \pi')$, since $\DVPB$ is enriched over $\Diff$.
These maps induce the natural bijection in the statement, showing that
this bijection is smooth.

To see that the inverse is smooth, recall that $\DVPB(\pi, \pi')$ is equipped with the
subset diffeology from $C^{\infty}(E, E') \times C^{\infty}(B, B')$, so we need
to show that the natural map
\begin{equation*}\label{eq:natmap}
  \lim_{i \in \cI} \DVPB(F(i), \pi') \lra C^{\infty}(E, E') \times C^{\infty}(B, B')
\end{equation*}
is smooth.  For this, we need to show that each component is smooth.
The second component is smooth because it is the composite of two smooth maps
\[
  \lim_{i \in \cI} \DVPB(F(i), \pi') \lra \lim_{i \in \cI} C^\infty(bF(i), B') \cong C^{\infty}(B, B') ,
\]
where $F(i) : tF(i) \to bF(i)$ and we are using that $B = \colim_{i \in \cI} bF(i)$.
To show that the first component is smooth, we need to show that its adjoint
\[
  \gamma : \lim_{i \in \cI} \DVPB(F(i), \pi') \times E \lra E'
\]
is smooth.
A plot of the limit consists of compatible plots $p_i : U \to C^{\infty}(tF(i), E')$
and $q_i : U \to C^{\infty}(bF(i), B')$ for $i \in I$.
To start, consider a plot of $E$ of the form $U \xrightarrow{r} tF(j) \to E$,
where $tF(j) \to E$ is part of the colimiting cone.
These plots together give a plot of the domain of $\gamma$, which when
composed with $\gamma$ give the map $U \to E$ sending $u$ to $p_j(u)(r(u))$.
This is smooth because the evaluation map $C^{\infty}(tF(j), E') \times tF(j) \to E'$ is smooth.
As a result of the dvsification performed in the construction of $\pi$,
a general plot in $E$ is locally a fibrewise sum of plots like the one
we considered.
Since $\pi' : E' \to B'$ was assumed to be a diffeological vector pseudo-bundle,
it follows that the composite of a general plot with $\gamma$ is smooth.
\end{proof}

Similar statements hold for limits in $\DVPB$ and limits and colimits in $\DVPB_B$.

We will later need the following result which identifies maps in $\DVPB_B$ out of
a colimit taken in $\DVPB$.
For a functor $F : \cI \to \DVPB$, we continue to write $bF(i)$ for the base of $F(i)$.

\begin{prop}\label{prop:colimit-limit}
 Let $F:\cI \to \DVPB$ be a functor, and write $B := \colim \, bF$ for the colimit in $\Diff$. Let $\pi:E \to B$ be a diffeological
 vector pseudo-bundle. Then there is a natural diffeomorphism
 \[
  \DVPB_B(\colim_{i \in \cI} F,\pi) \cong \{y \in \lim_{i \in \cI} \DVPB(F(i),\pi) \mid y_i \text{ lies over } bF(i) \to B\} ,
 \]
 where the diffeology on the right-hand-side is the subset diffeology of the limit diffeology.
\end{prop}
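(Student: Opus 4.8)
The plan is to obtain this as a straightforward consequence of Proposition~\ref{prop:colimit-enriched}. Write $\pi' : E' \to B$ for the colimit of $F$ in $\DVPB$; by the construction in Theorem~\ref{thm:DVPB-bicomplete} its base is indeed $B = \colim bF$, and for each $i$ the colimit cocone map $F(i) \to \pi'$ has base component the canonical map $c_i : bF(i) \to B$. By definition, $\DVPB_B(\colim_i F, \pi)$ is the subset of $\DVPB(\pi',\pi)$ consisting of those bundle maps whose base component is $1_B$, equipped with the subset diffeology; and Proposition~\ref{prop:colimit-enriched} gives a diffeomorphism $\DVPB(\pi',\pi) \cong \lim_i \DVPB(F(i),\pi)$. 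So it suffices to check that this diffeomorphism carries $\DVPB_B(\colim_i F, \pi)$ precisely onto the subset of the limit described in the statement.

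First I would unwind the correspondence set-theoretically. Under the diffeomorphism of Proposition~\ref{prop:colimit-enriched}, a point $y = (y_i)_i$ of $\lim_i \DVPB(F(i),\pi)$ corresponds to the unique bundle map $g : \pi' \to \pi$ whose precomposite with the $i$-th cocone map is $y_i$. Passing to base components, the base component $\bar g : B \to B$ of $g$ satisfies $\bar g \circ c_i = \bar y_i$ for all $i$, where $\bar y_i : bF(i) \to B$ denotes the base component of $y_i$. Since $B = \colim bF$, a smooth map out of $B$ is determined by its composites with the $c_i$; hence $\bar g = 1_B$ if and only if $\bar y_i = c_i$ for every $i$, that is, if and only if each $y_i$ lies over $bF(i) \to B$. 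This identifies the two subsets.

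Finally, since $\DVPB_B(\colim_i F, \pi)$ carries the subset diffeology from $\DVPB(\pi',\pi)$ and the right-hand side carries the subset diffeology from $\lim_i \DVPB(F(i),\pi)$, and a diffeomorphism restricts to a diffeomorphism between any two subsets that it matches (inductions compose and cancel), the displayed bijection is a diffeomorphism. Naturality is inherited from that of Proposition~\ref{prop:colimit-enriched}, together with the fact that the condition ``$y_i$ lies over $c_i$'' is preserved by the relevant maps. I do not expect a genuine obstacle here; the one point requiring care is the bookkeeping with base components, in particular using that the cocone $(c_i)$ is jointly epimorphic to pass from $\bar g \circ c_i = c_i$ to $\bar g = 1_B$.
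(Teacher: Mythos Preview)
Your proof is correct and follows essentially the same approach as the paper: both invoke Proposition~\ref{prop:colimit-enriched} to obtain the ambient diffeomorphism and then argue that the two indicated subspaces correspond under it, yielding a diffeomorphism of subspaces. Your write-up is slightly more explicit about why $\bar g = 1_B$ iff every $\bar y_i = c_i$ (using that $(c_i)$ is jointly epimorphic), which is exactly the point the paper handles in one line.
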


\begin{proof}
 By Proposition~\ref{prop:colimit-enriched}, we have a diffeomorphism
\[
  \DVPB(\colim_{i \in \cI} F, \pi) \cong \lim_{i \in \cI} \DVPB(F(i), \pi) .
\]
 The diffeological spaces in the statement are subspaces of these, so we simply
 need to check that the bijection restricts to the given subsets.
 The map from left to right sends a bundle map $f : \colim F \to \pi$ over $B$
 to the element $y \in \lim \DVPB(F(i), \pi)$ whose component $y_i$ is
 $f$ composed with the cone map $F(i) \to \colim F$.
 Thus $y_i$ is a map over the base $bF(i) \to B$ of this cone map.
 Conversely, any such element $y$ will induce a map $\colim F \to \pi$
 over the identity map of $B$.
\end{proof}

\subsection{Fibrewise operations}\label{ss:fibrewise}

In this section, we discuss fibrewise operations on diffeological vector
pseudo-bundles over a fixed base space $B$.
We begin with some operations that are simply limits and colimits in $\DVPB_B$,
and then move on to other operations, including the exterior power
and dual operations which will be used in Section~\ref{se:duality}.
Some of these definitions also appear in~\cite{V} and~\cite{P}.
However,~\cite{V} does not discuss the tensor product of diffeological
vector pseudo-bundles (called ``regular vector bundles'' there).
These are discussed in~\cite{P}, but the definition given there is
incorrect.
Neither reference discusses free bundles, $\Hom$ bundles, or the
subbundle reachable by sections.

\subsubsection{Products}

Let $\{\pi_i:E_i \to B\}_{i \in I}$ be a family of diffeological vector pseudo-bundles over the same base space $B$.
We now describe the product of these bundles in $\DVPB_B$, following the
proof of Theorem~\ref{thm:DVPB-bicomplete}.
First, the product in $\DVPB$ is simply $\prod_i \pi_i:\prod_i E_i \to \prod_i B$.
Then the \textbf{(fibrewise) product} $\prod_{i,B} \pi_i : \prod_{i,B} E_i \to B$ is
the pullback of $\prod_i \pi_i$ along the diagonal map
$\Delta:B \to \prod_i B$.
The fibre of $\prod_{i,B} \pi_i$ over $b$ is isomorphic to $\prod_i E_{i,b}$ for each $b \in B$.
Moreover, $\Gamma(\prod_{i,B} \pi_i) \cong \prod_i \Gamma(\pi_i)$ as diffeological vector spaces.

The product of trivial bundles is again trivial, or more precisely
$\prod_{i,B} (B \times V_i) \cong B \times (\prod_i V_i)$.
Moreover, given a smooth map $f : B' \to B$, the pullback operation
$f^* : \DVPB_B \to \DVPB_{B'}$ commutes with products in the sense
that there is a natural isomorphism
$f^*(\prod_{i,B} \pi_i) \cong \prod_{i,B'}(f^*(\pi_i))$.
This can be checked directly.
(A conceptual point of view is that the total space on both sides
of the isomorphism can be seen as the limit, in $\Diff$, of the
diagram consisting of the map $f$ and all of the maps $\pi_i$.)
It follows from these two facts that a finite product of locally trivial bundles is again locally trivial.

\subsubsection{Direct sums}

Let $\{\pi_j:E_j \to B\}_{j \in J}$ be a family of diffeological vector pseudo-bundles over the same base space $B$.
The \textbf{direct sum} $\bigoplus_{j \in J} \pi_j: \bigoplus_{j \in J} E_j \to B$ is defined to
be the coproduct in $\DVPB_B$.
Following the proof of Theorem~\ref{thm:DVPB-bicomplete},
the total space is $\coprod_{b \in B} (\bigoplus_{j \in J} E_{j,b})$, as a set.
For each $j' \in J$, there is a natural map $E_{j'} \to \bigoplus_{j \in J} E_j$.
We define the diffeology on $\bigoplus_{j \in J} E_j$ to be the
dvsification of the diffeology generated by these natural maps.
It follows that the subset diffeology on each fibre $\bigoplus_{j \in J} E_{j,b}$
is the dvsification of the diffeology generated by the natural inclusions
$E_{j',b} \to \bigoplus_{j \in J} E_{j,b}$,
which implies that $\bigoplus_{j \in J} E_{j,b}$ is a coproduct in the category of
diffeological vector spaces.
More generally, the pullback along a smooth map $B' \to B$ commutes with direct sums.

There is a natural smooth injective bundle map
$\bigoplus_{j \in J} E_j \to \prod_{j \in J, B} E_j$.
Since the category $\DVPB_B$ is $\Ab$-enriched, this map is an isomorphism
when $J$ is finite.
For general $J$, this injective map is not necessarily an induction;
see~\cite[Example~5.4]{W} for such an example.

\begin{rem}
 There is a natural injective map $\bigoplus_{j \in J} \Gamma(\pi_j) \to \Gamma(\bigoplus_j \pi_j)$,
 which is an isomorphism when $J$ is finite.
 In general, it can be a proper inclusion.
 For example, let $\pi$ be the countable direct sum of copies of $\Pr_1:\R^2 \to \R$,
 and let $f_j:\R \to \R$ be a smooth map such that $\Supp(f_j) =[j,j+1]$ for each $j \in \N$.
 Then $\pi$ is $\Pr_1: E := \R \times (\bigoplus_\omega \R) \to \R$, and $\R \to E$ defined by
 $x \mapsto (x,(f_j(x)))$ is a section of $\pi$, but it is not in $\bigoplus_\omega \Gamma(\Pr_1)$.
\end{rem}

\subsubsection{Subbundles and kernels}\label{sse:subbundle-kernel}

Let $\pi:E \to B$ be a diffeological vector pseudo-bundle. We say that a diffeological vector pseudo-bundle
$\pi':E' \to B$ is a \textbf{subbundle} of $\pi$ if there is a bundle map $i:E' \to E$ over $B$ which is an induction. In
this case, the restriction $i_b:E'_b \to E_b$ is also an induction for each $b \in B$.
For example, if $E'$ is a subset of $E$ equipped with the subset diffeology
such that $E'_b$ is a subspace of $E_b$ for each $b \in B$,
then the restriction of $\pi$ to $E'$ gives a subbundle $E' \to B$ of $\pi$.

Here is a source of examples.
Let $f:E' \to E$ be a bundle map over $B$ between two diffeological vector
pseudo-bundles $\pi':E' \to B$ and $\pi:E \to B$.
Then the \textbf{kernel} of $f$ is the subbundle
$K := \{ e \in E' \mid f(e) = 0 \}$ of $E'$.
This is the equalizer of $f$ and the zero map in $\DVPB_B$,
and also in $\DVPB$ (see Remark~\ref{rem:connected-limit}).
One can check that pullback along a smooth map $B' \to B$ commutes with kernels.

\subsubsection{Quotients and cokernels}\label{sss:quotients}

Dually, a diffeological vector pseudo-bundle $\pi':E' \to B$ is called a \textbf{quotient} of another
diffeological vector pseudo-bundle $\pi:E \to B$ if there is a bundle map $q:E \to E'$ over $B$ which is a
subduction. In this case, the restriction $q_b:E_b \to E_b'$ is also a
subduction for each $b \in B$.
For example, if $E'' \subseteq E$ is a subbundle, write $E/E''$ for the quotient
of $E$ by the equivalence relation $\sim$ with $e_1 \sim e_2$ if $e_1$ and $e_2$
are in the same fibre and $e_1 - e_2 \in E''$.
Give $E/E''$ the quotient diffeology.  Then $\pi$ induces a natural smooth map
$E/E'' \to B$.
This is already a diffeological vector pseudo-bundle,
without having to apply dvsification, since plots in $E/E''$ locally lift
to $E$ where they can be added and multiplied by scalars.

As a special case, the \textbf{cokernel} of a map $f : E'' \to E$ in $\DVPB_B$
is $E/f(E'') \to B$.
This is the coequalizer of $f$ and the zero map in $\DVPB_B$.
One can check that pullback along a smooth map $B' \to B$ commutes with cokernels.

Let $E \to E'$ be a quotient bundle map over $B$, and let $E''$ be its kernel.
Then $E'$ is isomorphic to the cokernel $E/E''$ as diffeological vector pseudo-bundles over $B$.

\subsubsection{Free bundles}
In order to define tensor products, we introduce a fibrewise free vector space
construction.
Let $f:X \to B$ be a smooth map between diffeological spaces. As a set, let $F_B(X)$ be the disjoint union
$\coprod_{b \in B} F(X_b)$, where $F(X_b)$ denotes the free vector space generated by the
diffeological subspace $X_b := f^{-1}(b)$ of $X$ for each $b \in B$.
There is a canonical map $F_B(f) : F_B(X) \to B$ sending $v \in F(X_b)$ to $b$. Equip $F_B(X)$ with the
dvsification of the diffeology generated by the canonical map $i_X:X \to F_B(X)$ sending $x \in X_b$ to
$[x] \in F(X_b)$.
We give an explicit description of this diffeology:

\begin{prop}\label{pr:free}
 Given plots $q:U \to B$ and
 $\{q_i:U \to X\}_{i=1}^n$ such that $q = f \circ q_i$ for each $i$, together with smooth maps
 $\{r_i:U \to \R\}_{i=1}^n$, the map $U \to F_B(X)$ defined by $u \mapsto \sum_{i=1}^n r_i(u) [q_i(u)]$ is a plot
 of $F_B(X)$,
 and every plot of $F_B(X)$ is locally of this form.
 Note that when $n=0$, this is the composite of the plot $q$ of $B$ with the zero section
 $B \to F_B(X)$.
\end{prop}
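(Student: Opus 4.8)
The plan is to unwind the two nested universal properties that define the diffeology on $F_B(X)$: it is the dvsification of the diffeology $\mathcal{D}_0$ generated by the single map $i_X : X \to F_B(X)$. By Proposition~\ref{prop:dvsification} and the concrete description of dvsification recalled just after it, a plot in the dvsified diffeology is locally a fibrewise linear combination $u \mapsto \sum_{i=1}^n r_i(u)\,s_i(u)$, where $q : U \to B$ is a plot, the $s_i : U \to F_B(X)$ are plots in $\mathcal{D}_0$ lying over $q$ (i.e.\ $F_B(f) \circ s_i = q$), and the $r_i : U \to \R$ are smooth. So the first step is to understand plots in $\mathcal{D}_0$ itself.

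First I would show that the maps listed in the statement are indeed plots. The map $i_X$ is a plot by construction, and composing it with a plot $q_i : U \to X$ shows that $u \mapsto [q_i(u)]$ is a plot of $F_B(X)$ (lying over $q = f \circ q_i$); then applying dvsification gives that $u \mapsto \sum_i r_i(u)[q_i(u)]$ is a plot, which is exactly the asserted form (the $n = 0$ case being the zero section composed with $q$, as noted). This direction is routine.

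For the converse — that every plot is locally of this form — I would argue that the collection $\mathcal{P}$ of maps of the stated shape is already closed under the two operations that define a diffeology generated by $i_X$ (constant maps, smooth precomposition, locality) \emph{and} under fibrewise linear combination, so that $\mathcal{P}$ (closed up under locality) is a diffeology containing $i_X$ and closed under the dvsification operations; since $F_B(X)$'s diffeology is the smallest such, every plot is locally in $\mathcal{P}$. Concretely: constant maps have this form trivially; precomposing $u \mapsto \sum_i r_i(u)[q_i(u)]$ with a smooth $g : V \to U$ yields $\sum_i (r_i \circ g)[q_i \circ g]$, again of the form; and a fibrewise linear combination of maps of this form, all lying over a common plot $q$ of $B$, is visibly again of this form (concatenate the index sets and multiply the coefficients). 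Since $i_X$ itself is the case $n = 1$, $q_1 = \mathrm{id}$-composed-appropriately, $r_1 = 1$, the generated-then-dvsified diffeology is contained in the locality-closure of $\mathcal{P}$, giving the result.

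The main obstacle is the bookkeeping in the closure-under-linear-combinations step, and in particular making sure that when we take a local fibrewise sum as produced by dvsification, all the summands can be arranged to lie over the \emph{same} plot $q : U \to B$ and that the $X$-valued plots $q_i$ appearing in different summands can be amalgamated; this is where one uses that plots in $\mathcal{D}_0$ over a fixed $q$ are, locally, themselves of the form $u \mapsto [q'(u)]$ with $f \circ q' = q$ (a single generator, since $i_X$ is the only generator, so $\mathcal{D}_0$-plots are locally reparametrizations of $i_X$, i.e.\ locally $u \mapsto [q'(u)]$). Once that local normal form for $\mathcal{D}_0$-plots is in hand, the dvsification step produces exactly the sums in the statement, and restricting to a small enough domain makes everything global on that piece. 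I expect the whole argument to be short modulo this careful tracking of domains and index sets.
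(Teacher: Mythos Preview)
Your proposal is correct and follows essentially the same approach as the paper, which simply says the result ``follows directly from the explicit description of the dvsification'' given after Proposition~\ref{prop:dvsification}; you are spelling out that one-line proof. One small imprecision: plots in the generated diffeology $\mathcal{D}_0$ are locally either of the form $u \mapsto [q'(u)]$ \emph{or} constant at an arbitrary point of $F_B(X)$, not only the former; but since you already observed that every constant map $u \mapsto \sum a_j[x_j]$ is of the stated form (take $q$ and the $q_j$ constant), this does not affect the argument.
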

\begin{proof}
 This follows directly from the explicit description of the dvsification; see the second paragraph after
 Proposition~\ref{prop:dvsification}.
\end{proof}

As an easy consequence of this proposition, the subset diffeology on the fibre of $F_B(X)$ over $b \in B$ is exactly
the diffeology of the free diffeological vector space generated by $X_b$; see~\cite[Proposition~3.5]{W} for more
details about the free diffeological vector space generated by a diffeological space.

The universal property of $F_B(f)$ is similar to the universal property of the free diffeological vector space
generated by a diffeological space; see the proof of~\cite[Proposition~3.5]{W} or the end of
Section~\ref{subsection-diff}. More precisely, given any commutative triangle in $\Diff$:
\[
 \xymatrix@C5pt{X \ar[rr]^g \ar[dr]_-f && E \ar[dl]^-{\pi} \\ & B,}
\]
where $\pi$ is a diffeological vector pseudo-bundle, there exists a unique bundle map $\tilde{g}:F_B(X) \to E$
over $B$ such that $g = \tilde{g} \circ i_X$. In other words, the free bundle functor
is left adjoint to the forgetful functor
$\DVPB_B \to \Diff/B$.
Moreover, if we write $\Mor(\Diff)$ for the category with objects the morphisms in $\Diff$
and morphisms the commutative squares, with the obvious composition, then the free
bundle construction gives a functor $\Mor(\Diff) \to \DVPB$
which is the left adjoint to the forgetful functor $\DVPB \to \Mor(\Diff)$.

If $\pi : B \times Y \to B$ is the projection, then
$F_B(\pi)$ is isomorphic to the projection $B \times F(Y) \to B$
as diffeological vector pseudo-bundles over $B$.
Moreover, one can check that pullback along a smooth map $B' \to B$
commutes with the free bundle construction.
It follows that if a smooth map $f : X \to B$ is locally trivial,
then so is the diffeological vector pseudo-bundle $F_B(f) : F_B(X) \to B$.

\subsubsection{Tensor products}
Given diffeological vector pseudo-bundles $\pi_i:E_i \to B$ for $i=1,2$, we define the \textbf{tensor
product} $\pi_1 \otimes \pi_2 : E_1 \otimes E_2 \to B$ as follows. First, we form the product (which is the same
as the direct sum) $E_1 \times_B E_2 \to B$, with each fibre isomorphic to $E_{1,b} \times E_{2,b}$.
Then we use the free functor to get a diffeological vector pseudo-bundle $F_B(E_1 \times_B E_2) \to B$
so that each fibre is the free diffeological vector space generated by $E_{1,b} \times E_{2,b}$. Finally, we form the
quotient diffeological vector pseudo-bundle so that as vector spaces each fibre is $E_{1,b} \otimes E_{2,b}$.
Since these constructions respect pullbacks, each fibre has the tensor product diffeology (see~\cite[Section~2.3]{V} and~\cite[Section~3]{W}).

There is a canonical smooth map
$\psi : E_1 \times_B E_2 \to E_1 \otimes E_2$ over $B$ which is fibrewise bilinear,
and this is the initial fibrewise bilinear map to a diffeological vector
pseudo-bundle over $B$:
for any diffeological vector pseudo-bundle $E \to B$,
composition with $\psi$ gives a bijection between bundle maps $E_1 \otimes E_2 \to E$ over $B$ and
smooth maps $E_1 \times_B E_2 \to E$ over $B$ which are fibrewise bilinear.
By Proposition~\ref{pr:free}, we can concretely describe the plots
$V \to E_1 \otimes E_2$ as the maps that can be \emph{locally} expressed as a finite sum
$\sum_i r_i(v) \, q_{i,1}(v) \otimes q_{i,2}(v)$,
where $r_i : V \to \R$ and $q_{i,j} : V \to E_j$ are smooth,
and $\pi_1 \circ q_{i,1} = \pi_2 \circ q_{i,2}$ for all $i$.
In other words, the diffeology on $E_1 \otimes E_2$ can be described as the
dvsification of the diffeology generated by the map $\psi$,
matching the description given in~\cite[Definition~5.2.1]{V}.

Because the product operation is commutative and associative, so is the tensor product.
The tensor product of two trivial bundles is again trivial:
$(B \times V) \otimes (B \times V') \cong B \times (V \otimes V')$.
Moreover, pullback along a smooth map $B' \to B$ commutes with tensor products.
It follows that the tensor product of locally trivial bundles is again locally trivial.

Given a diffeological vector pseudo-bundle $\pi:E \to B$, we can form the tensor bundle $T_B(\pi):T_B(E) \to B$ with
$T_B(\pi) := \bigoplus_{i \in \N} \pi^{\otimes i}$, where $\pi^{\otimes 0}$ is understood to be the trivial bundle
$B \times \R \to B$, $\pi^{\otimes 1}$ is the original bundle $\pi:E \to B$,
and $\pi^{\otimes i}:E^{\otimes i} \to B$ for $i \geq 2$. It is straightforward to check that
the fibre of $T_B(\pi)$ at $b$ is exactly $\bigoplus_{i \in \N} E_b^{\otimes i}$, and that the fibrewise tensor
multiplication map is smooth.

\subsubsection{Exterior powers}
Given a diffeological vector pseudo-bundle $\pi : E \to B$ and an integer $k$ with $k \geq 2$, we can form the $k$-fold
tensor bundle $\pi^{\otimes k} : E^{\otimes k} \to B$. Taking the usual fibrewise linear quotient of this tensor bundle equipped
with the quotient diffeology, we get the $k$-fold exterior bundle $\Exterior^k \pi : \Exterior^k E \to B$ whose fibre over $b \in B$
is $\Exterior^k E_b$ with the quotient diffeology of $E_b^{\otimes k}$.
Note that fibrewise antisymmetrization defines a smooth map $a: E^{\otimes k} \to E^{\otimes k}$
whose image can be identified with $\Exterior^k E$.
Since $a$ is idempotent, it is clear that the quotient diffeology on $\Exterior^k E$
agrees with the subset diffeology coming from $E^{\otimes k}$,
as observed in~\cite[Lemma~2.11]{P2}.
As was done for the tensor product, the plots can be explicitly described as those
that can be locally expressed as finite linear combinations of wedge products
of compatible plots in $E$;
this is made explicit in the proof of Theorem~\ref{thm:recover-forms}.

We also define $\Exterior^1 \pi$ to be $\pi$ and $\Exterior^0 \pi$ to be
the trivial bundle $B \times \R \to B$.
We can form the exterior bundle $\Exterior \pi := \bigoplus_{k \in \N} (\Exterior^k \pi)$,
which is a map $\Exterior E := \bigoplus_{k \in \N} (\Exterior^k E) \to B$.
It is straightforward to check that the fibrewise algebraic quotient map $T_B(E) \to \Exterior E$
is a quotient bundle map over $B$, so that the exterior multiplication map on $\Exterior E$ is smooth.

One can check directly that $\Exterior^k (B \times V) \cong B \times \Exterior^k V$ as
diffeological vector pseudo-bundles over $B$.
Moreover, pullback along a smooth map $B' \to B$ commutes with exterior powers.
It follows that the exterior powers of a locally trivial bundle are again locally trivial.

For each $k \in \N$, both the $k$-fold tensor power and the $k$-fold
exterior power give functors $\DVPB \to \DVPB$ in an obvious way.

While we don't make use of it in this paper, one can similarly define
the $k$-fold symmetric powers $\textup{Sym}^k(\pi)$.

\subsubsection{Hom and dual bundles}\label{sss:Hom}
Given diffeological vector pseudo-bundles $\pi_1 : E_1 \to B$ and $\pi_2 : E_2 \to B$, we
define a new diffeological vector pseudo-bundle $\Hom_B(\pi_1, \pi_2) : \Hom_B(E_1,E_2) \to B$ as follows.
As a set, the total space is the disjoint union $\coprod_{b \in B} L^\infty(E_{1,b},E_{2,b})$ of smooth linear maps
$E_{1,b} \to E_{2,b}$, where $E_{1,b}$ and $E_{2,b}$ are equipped with the subset diffeologies from $E_1$ and $E_2$,
respectively. This has a canonical map to $B$. Define $p:U \to \Hom_B(E_1,E_2)$ to be a plot if the composite
$U \to \Hom_B(E_1,E_2) \to B$ and the map \mbox{$U \times_B E_1 \to E_2$} defined by
$(u,x) \mapsto p(u)(x)$ are both smooth, where the pullback uses the first map.
It is straightforward to check that this makes $\Hom_B(\pi_1, \pi_2): \Hom_B(E_1,E_2) \to B$ into a diffeological vector pseudo-bundle.
Note that each fibre is exactly the set of all smooth linear maps $E_{1,b} \to E_{2,b}$ with the functional diffeology.
Given a third diffeological vector pseudo-bundle $\pi_3 : E_3 \to B$,
the composition map $\Hom_B(E_2,E_3) \times_B \Hom_B(E_1,E_2) \to \Hom_B(E_1,E_3)$ is
smooth, commutes with the maps to $B$, and is fibrewise bilinear. Hence it induces a bundle map
$\Hom_B(\pi_2,\pi_3) \otimes \Hom_B(\pi_1,\pi_2) \to \Hom_B(\pi_1,\pi_3)$.

We have that $\Hom_B(B \times V_1, B \times V_2) \cong B \times L^\infty(V_1,V_2)$.
Moreover, pullback along a smooth map $B' \to B$ commutes with forming hom bundles.
It follows that the hom bundle between two locally trivial bundles is again locally trivial.

Given a diffeological vector pseudo-bundle $\pi:E \to B$, we define its \textbf{dual}
$\pi^*:E^* \to B$ to be $\Hom(\pi, \Pr_1) : \Hom_B(E, B \times \R) \to B$,
where the codomain is the trivial bundle $\Pr_1:B \times \R \to B$.
Note that $\Hom_B(\Pr_1, \pi)$ is naturally isomorphic to $\pi$.

We have the following expected result:

\begin{prop}\label{prop:adjoint}
 Given a diffeological vector pseudo-bundle $\pi:E \to B$, we have an adjoint pair
 \[
  ? \otimes \pi : \DVPB_B \rightleftharpoons \DVPB_B: \Hom_B(\pi, ?).
 \]
 Moreover, given diffeological vector pseudo-bundles $\pi_1$, $\pi_2$ and $\pi_3$ over $B$,
 we have a natural isomorphism
 $\Hom_B(\pi_1 \otimes \pi_2, \pi_3) \cong \Hom_B(\pi_1, \Hom_B(\pi_2, \pi_3))$
 of diffeological vector pseudo-bundles over $B$.
\end{prop}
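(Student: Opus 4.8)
The plan is to derive both statements from two facts already available: the universal property of the fibrewise tensor product (bundle maps $E_1 \otimes E_2 \to E$ over $B$ are the same as smooth fibrewise bilinear maps $E_1 \times_B E_2 \to E$ over $B$), and the defining property of the $\Hom$-bundle (a map $p : U \to \Hom_B(E_1, E_2)$ from an open subset of Euclidean space is a plot precisely when the induced map $U \to B$ and the induced map $U \times_B E_1 \to E_2$, $(u,x) \mapsto p(u)(x)$, are both smooth). A direct consequence of the latter, which I will use repeatedly, is that the fibrewise evaluation $\Hom_B(E_1, E_2) \times_B E_1 \to E_2$ is smooth.

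For the adjunction, fix $\pi' : E' \to B$ and $\pi'' : E'' \to B$. Starting from a bundle map $E' \otimes E \to E''$ over $B$, the tensor universal property presents it as a smooth fibrewise bilinear map $\beta : E' \times_B E \to E''$ over $B$; I assign to it the map $\hat\beta : E' \to \Hom_B(E, E'')$ given fibrewise by $\hat\beta(e')(e) = \beta(e',e)$. Bilinearity of $\beta$ makes each $\hat\beta(e')$ linear, hence smooth (it is a restriction of $\beta$), so $\hat\beta$ does land in $\Hom_B(E, E'')$ and is fibrewise linear; and for any plot $p : U \to E'$ the uncurried map $(u,e) \mapsto \beta(p(u),e)$ is the composite of the smooth map $U \times_B E \to E' \times_B E$ determined by $p$ with $\beta$, hence smooth, so $\hat\beta \circ p$ is a plot of $\Hom_B(E,E'')$ by the defining property. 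Conversely, a bundle map $\phi : E' \to \Hom_B(E, E'')$ over $B$ gives $\check\phi : E' \times_B E \to E''$, $(e',e) \mapsto \phi(e')(e)$, which is the composite of $\phi \times_B \mathrm{id}_E$ with evaluation, hence smooth, and is fibrewise bilinear since $\phi$ is fibrewise linear with linear values. These assignments are mutually inverse and natural in $\pi'$ and $\pi''$, so combining with the tensor universal property yields the natural bijection
\[
 \DVPB_B(\pi' \otimes \pi, \pi'') \;\cong\; \DVPB_B(\pi', \Hom_B(\pi, \pi'')) ,
\]
which is the asserted adjoint pair. (The same formulas show this bijection is a diffeomorphism of hom-objects, so the adjunction is in fact enriched over diffeological vector spaces, though we do not need this.)

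For the natural isomorphism $\Hom_B(\pi_1 \otimes \pi_2, \pi_3) \cong \Hom_B(\pi_1, \Hom_B(\pi_2, \pi_3))$ I would argue by representability. For an arbitrary $\pi_0 : E_0 \to B$, the adjunction just established, applied twice, together with the associativity of $\otimes$, gives natural bijections
\[
 \DVPB_B(\pi_0, \Hom_B(\pi_1 \otimes \pi_2, \pi_3)) \cong \DVPB_B(\pi_0 \otimes (\pi_1 \otimes \pi_2), \pi_3) \cong \DVPB_B((\pi_0 \otimes \pi_1) \otimes \pi_2, \pi_3)
\]
and
\[
 \DVPB_B(\pi_0, \Hom_B(\pi_1, \Hom_B(\pi_2, \pi_3))) \cong \DVPB_B(\pi_0 \otimes \pi_1, \Hom_B(\pi_2, \pi_3)) \cong \DVPB_B((\pi_0 \otimes \pi_1) \otimes \pi_2, \pi_3) ,
\]
all natural in $\pi_0$. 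Hence the two $\Hom$-bundles represent naturally isomorphic functors on $\DVPB_B$, so they are isomorphic objects of $\DVPB_B$ by the Yoneda lemma, i.e.\ isomorphic as diffeological vector pseudo-bundles over $B$. (One could instead build the isomorphism by hand: over each $b \in B$ it is the tensor--hom adjunction for the diffeological vector spaces $E_{1,b}, E_{2,b}, E_{3,b}$, and one verifies with the plot descriptions of the $\Hom$-bundles that this fibrewise bijection and its inverse are smooth.)

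I expect the only real work to be the smoothness bookkeeping in the second paragraph --- in particular, checking that currying a smooth fibrewise bilinear map yields a genuine plot of the $\Hom$-bundle, and that evaluation is smooth; once the hom-set bijection is in place, the rest is formal.
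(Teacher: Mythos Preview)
Your proof of the adjunction is correct and matches the paper's approach: both factor through smooth fibrewise bilinear maps, and you simply fill in more of the smoothness bookkeeping than the paper does.

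For the internal $\Hom$ isomorphism, however, you take a genuinely different route. The paper argues directly at the level of fibres: it invokes the tensor--hom adjunction $L^\infty(E_{1,b} \otimes E_{2,b}, E_{3,b}) \cong L^\infty(E_{1,b}, L^\infty(E_{2,b}, E_{3,b}))$ for each $b \in B$, assembles these into a bijection of total spaces, and then checks via the plot description of the $\Hom$-bundle that the bijection and its inverse are smooth. Your argument is instead a Yoneda argument: you apply the hom-set adjunction from the first part twice, together with associativity of $\otimes$ (which the paper records just before this proposition), to show that the two $\Hom$-bundles represent naturally isomorphic functors on $\DVPB_B$. This is cleaner and avoids a second round of plot-level verification, at the cost of relying on associativity of the fibrewise tensor product; the paper's approach is more explicit about what the isomorphism actually does on fibres. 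Both are valid, and you even note the direct alternative parenthetically.
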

\begin{proof}
 For the first claim, we need to show that there is a natural bijection
 \[
  \DVPB_B(\pi_1 \otimes \pi, \pi_3) \cong \DVPB_B(\pi_1, \Hom_B(\pi,\pi_3)).
 \]
 Recall that there is a natural bijection from the left-hand side to the set of maps $\pi_1 \times_B \pi \to \pi_3$
 which are smooth and fibrewise bilinear. It is easy to check that the latter naturally bijects with the right-hand side.

 For the second claim, notice that for each $b \in B$, there is a bijection
 $L^\infty(E_{1b} \otimes E_{2b}, E_{3b}) \cong L^\infty(E_{1b}, L^\infty(E_{2b}, E_{3b}))$,
 which together give a bijection of the required sets. Using the definition of the plots of the $\Hom$ bundle,
 it is straightforward to check that this is indeed an isomorphism of diffeological vector pseudo-bundles over $B$.
\end{proof}

\begin{prop}\label{prop:hom-sum-prod}
Let $\pi_i : E_i \to B$ be a family of diffeological vector pseudo-bundles over $B$,
for $i$ in some indexing set, and
let $\pi : E \to B$ be another diffeological vector pseudo-bundle over $B$.
Then we have the following natural isomorphisms of diffeological vector pseudo-bundles over $B$:
\[
 \Hom_B(\bigoplus_i \pi_i, \pi) \cong \prod_{i,B} \Hom_B(\pi_i, \pi),
\]
\[
 (\bigoplus_i \pi_i) \otimes \pi \cong \bigoplus_i (\pi_i \otimes \pi),
\]
and
\[
 \Hom_B(\pi, \prod_{i,B} \pi_i) \cong \prod_{i,B} \Hom_B(\pi, \pi_i).
\]
\end{prop}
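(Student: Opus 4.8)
The plan is to deduce all three isomorphisms formally from Proposition~\ref{prop:adjoint}, together with the facts --- established in Theorem~\ref{thm:DVPB-bicomplete} and the subsections on products and direct sums --- that $\bigoplus_i$ is the coproduct and $\prod_{i,B}$ is the product in $\DVPB_B$. The second isomorphism is then immediate: $? \otimes \pi$ is a left adjoint by Proposition~\ref{prop:adjoint}, hence preserves all colimits, so applying it to the coproduct $\bigoplus_i \pi_i$ yields $(\bigoplus_i \pi_i) \otimes \pi \cong \bigoplus_i (\pi_i \otimes \pi)$. The third is equally immediate: $\Hom_B(\pi, ?)$ is a right adjoint by the same proposition, hence preserves all limits, so applying it to the product $\prod_{i,B} \pi_i$ yields $\Hom_B(\pi, \prod_{i,B} \pi_i) \cong \prod_{i,B} \Hom_B(\pi, \pi_i)$.

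For the first isomorphism I would argue that $\Hom_B(-, \pi)$ turns colimits in $\DVPB_B$ into limits. Using commutativity of the tensor product and the adjunction $? \otimes \tau \dashv \Hom_B(\tau, ?)$ of Proposition~\ref{prop:adjoint}, there is a natural bijection $\DVPB_B(\sigma, \Hom_B(\tau, \pi)) \cong \DVPB_B(\sigma \otimes \tau, \pi) \cong \DVPB_B(\tau \otimes \sigma, \pi) \cong \DVPB_B(\tau, \Hom_B(\sigma, \pi))$, natural in $\sigma$ and $\tau$. Chaining this with the universal properties of $\bigoplus_i$ (as a coproduct) and of $\prod_{i,B}$ (as a product), one identifies $\DVPB_B(\sigma, \Hom_B(\bigoplus_i \pi_i, \pi))$ with $\DVPB_B(\sigma, \prod_{i,B} \Hom_B(\pi_i, \pi))$ naturally in $\sigma$. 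The Yoneda lemma then gives the desired isomorphism in $\DVPB_B$; and since an isomorphism in $\DVPB_B$ is by definition a fibrewise-linear diffeomorphism of total spaces over $B$, this is exactly an isomorphism of diffeological vector pseudo-bundles over $B$.

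I expect the only delicate points to be bookkeeping: one must keep $\bigoplus_i$ (the coproduct) and $\prod_{i,B}$ (the product) carefully distinct when the index set is infinite, and one must check that the hom-set bijections above really are natural --- which they are, being built out of the naturality assertions already contained in Proposition~\ref{prop:adjoint}. As an alternative to the categorical argument, each isomorphism can instead be verified directly and fibrewise: over a point it reduces to the standard facts $L^\infty(\bigoplus_i V_i, V) \cong \prod_i L^\infty(V_i, V)$, $(\bigoplus_i V_i) \otimes V \cong \bigoplus_i (V_i \otimes V)$, and $L^\infty(V, \prod_i V_i) \cong \prod_i L^\infty(V, V_i)$ for diffeological vector spaces, using that $\bigoplus_i V_i$ is the coproduct of diffeological vector spaces; one then checks that the evident bijection of total spaces is a diffeomorphism by means of the explicit plot descriptions arising from dvsification (the paragraph following Proposition~\ref{prop:dvsification} and Proposition~\ref{pr:free}). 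This hands-on route is where the real, if modest, work would lie, and the adjoint-functor argument is designed precisely to sidestep it.
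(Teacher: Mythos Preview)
Your argument is correct, and for the second and third isomorphisms it is exactly the paper's argument: the paper invokes Proposition~\ref{prop:adjoint} in the same way, noting that $? \otimes \pi$ is a left adjoint and $\Hom_B(\pi, ?)$ is a right adjoint.

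The only difference is in the first isomorphism. The paper does not give the Yoneda argument you sketch; it simply says ``there is an evident natural smooth map from left to right that one can check is an isomorphism,'' i.e.\ it leaves the direct verification to the reader (your ``hands-on route''). Your approach---using commutativity of the tensor product to flip the adjunction and then applying Yoneda---is a clean way to avoid that check, at the mild cost of invoking the symmetry of $\otimes$ and the ordinary (set-valued) universal properties of coproduct and product in $\DVPB_B$. Both arguments are short; yours is more uniform with the other two claims, while the paper's is self-contained at the level of plots. Either is perfectly acceptable here.
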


\begin{proof}
For the first claim, there is an evident natural smooth map from
left to right that one can check is an isomorphism.  %

For the second claim, the natural map goes from right to left,
and it follows from Proposition~\ref{prop:adjoint} that it is
an isomorphism, since $? \otimes \pi$ is a left adjoint.

For the third claim, the natural map goes from left to right,
and again it follows from Proposition~\ref{prop:adjoint} that it is
an isomorphism, since $\Hom_B(\pi, ?)$ is a right adjoint.
\end{proof}

Given diffeological vector pseudo-bundles $\pi_i:E_i \to B$ for $i = 1, 2$,
recall that $\DVPB_B(\pi_1,\pi_2)$ is a diffeological vector space under pointwise
operations, while $\Hom_B(\pi_1,\pi_2)$ has fibres which are diffeological vector spaces.
We can recover the former in terms of the latter.

\begin{lem}\label{lem:Hom-Gamma}
 There is an isomorphism $\DVPB_B(\pi_1,\pi_2) \cong \Gamma(\Hom_B(\pi_1, \pi_2))$
 as diffeological vector spaces.
\end{lem}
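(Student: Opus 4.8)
The plan is to exhibit a natural bijection on underlying sets and then check that it and its inverse are smooth. Given a bundle map $f : E_1 \to E_2$ over $B$, for each $b \in B$ the restriction $f_b : E_{1,b} \to E_{2,b}$ is a smooth linear map, hence an element of the fibre of $\Hom_B(E_1,E_2)$ over $b$; so $b \mapsto f_b$ defines a set-theoretic section $s_f$ of $\Hom_B(\pi_1,\pi_2) \to B$. Conversely, a section $s$ of $\Hom_B(\pi_1,\pi_2)$ assigns to each $b$ a smooth linear map $s(b) : E_{1,b} \to E_{2,b}$, and these assemble into a fibrewise-linear function $f_s : E_1 \to E_2$ over $B$ defined by $f_s(e) = s(\pi_1(e))(e)$. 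These two assignments are mutually inverse and are clearly linear in $f$ (resp. $s$) under the pointwise operations, so it remains only to verify that both directions are smooth with respect to the relevant subset diffeologies — $\DVPB_B(\pi_1,\pi_2)$ has the subset diffeology from $C^\infty(E_1,E_2)$, and $\Gamma(\Hom_B(\pi_1,\pi_2))$ has the subset diffeology from $C^\infty(B, \Hom_B(E_1,E_2))$.

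For smoothness in the direction $f \mapsto s_f$, I would start with a plot $P : W \to \DVPB_B(\pi_1,\pi_2)$, i.e.\ (via the subset diffeology) a smooth map $W \to C^\infty(E_1,E_2)$ landing in bundle maps over $B$; equivalently its adjoint $W \times E_1 \to E_2$ is smooth. I must show the composite $W \to \Gamma(\Hom_B(\pi_1,\pi_2)) \to C^\infty(B, \Hom_B(E_1,E_2))$ is smooth, and by the definition of plots of a $\Hom$ bundle given in Section~\ref{sss:Hom} this amounts to checking two things: that the composite $W \times B \to \Hom_B(E_1,E_2) \to B$ (which is just $\Pr_B$) is smooth, which is immediate; and that the map $(W \times B) \times_B E_1 \to E_2$, $(w, b, e) \mapsto P(w)_b(e) = P(w)(e)$, is smooth. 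Since $(W \times B)\times_B E_1 \cong W \times E_1$, this last map is exactly the adjoint of $P$, which is smooth by hypothesis.

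For the reverse direction $s \mapsto f_s$, I would take a plot $Q : W \to \Gamma(\Hom_B(\pi_1,\pi_2))$, i.e.\ a smooth map $W \to C^\infty(B,\Hom_B(E_1,E_2))$ whose values are sections. Its adjoint $W \times B \to \Hom_B(E_1,E_2)$ is smooth and lies over $\Pr_B$, so by the definition of plots of $\Hom_B$, the map $(W\times B)\times_B E_1 \to E_2$, $(w,b,e) \mapsto Q(w)(b)(e)$, is smooth. I must show $W \to \DVPB_B(\pi_1,\pi_2) \to C^\infty(E_1,E_2)$ is smooth, i.e.\ that its adjoint $W \times E_1 \to E_2$, $(w,e) \mapsto Q(w)(\pi_1(e))(e)$, is smooth; but this is the precomposition of the smooth map just described with $(\mathrm{id}_W, \pi_1, \mathrm{id}_{E_1}) : W \times E_1 \to (W \times B)\times_B E_1$, which is smooth since $\pi_1$ is. Finally the diffeological vector space structures match because both bijections are linear and, being mutually inverse smooth linear maps, give an isomorphism of diffeological vector spaces. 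The only mildly delicate point — and the one I would be most careful about — is bookkeeping the identifications of pullbacks $(W\times B)\times_B E_1 \cong W \times E_1$ and confirming that the "two-component" smoothness criterion for plots of a $\Hom$ bundle is being applied to the correct maps; no genuine obstacle is expected.
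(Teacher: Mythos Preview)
Your proof is correct and follows essentially the same approach as the paper's: both construct the mutually inverse maps $f \mapsto (b \mapsto f_b)$ and $s \mapsto (e \mapsto s(\pi_1(e))(e))$ and then verify smoothness in each direction. The paper's proof is much terser, declaring the smoothness checks ``straightforward'' and ``easy to see,'' whereas you actually carry them out via adjoints and the plot criterion for $\Hom_B$; your bookkeeping with the identification $(W\times B)\times_B E_1 \cong W\times E_1$ is correct.
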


\begin{proof}
 A bundle map $f:E_1 \to E_2$ over $B$ induces a map $f_*:B \to \Hom_B(\pi_1,\pi_2)$ by sending $b$ to the restriction,
 $E_{1,b} \to E_{2,b}$ which is smooth and linear.
 It is straightforward to check that $f_*$ is a smooth section of $\Hom_B(E_1,E_2) \to B$.
 Therefore, we have a map $\DVPB_B(\pi_1,\pi_2) \to \Gamma(\Hom_B(\pi_1,\pi_2))$, and it is easy to see that this map is smooth
 and linear.

 On the other hand, given a section $s:B \to \Hom_B(E_1,E_2)$, we can define $s_*:E_1 \to E_2$ by sending
 $x$ to $s(\pi_1(x))(x)$. It is straightforward to see that $s_*$ is a bundle map over $B$. Hence, we have a map
 $\Gamma(\Hom_B(\pi_1,\pi_2)) \to \DVPB_B(\pi_1,\pi_2)$, which is easily seen to be smooth.

 Clearly these maps are inverses to each other, so that $\DVPB_B(\pi_1,\pi_2) \cong \Gamma(\Hom_B(\pi_1,\pi_2))$ as diffeological
 vector spaces.
\end{proof}

\subsubsection{The subbundle reachable by sections}\label{sss:E'}

Given a diffeological vector pseudo-bundle $\pi : E \to B$, it is not true in
general that every element of $E$ is the value of some section.
Write $\pi' : E' \to B$ for the subbundle whose total space consists of
$\{s(b) \mid s \in \Gamma(\pi),\, b \in B\}$.

\begin{lem}\label{lem:E'}
 Let $\pi:E \to B$ be a diffeological vector pseudo-bundle.
 Then the inclusion $E' \to E$ induces an isomorphism $\Gamma(\pi') \cong \Gamma(\pi)$
 of diffeological vector spaces.
\end{lem}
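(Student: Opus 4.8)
The plan is to exhibit the isomorphism directly. Let $j : E' \to E$ denote the inclusion bundle map over $B$; by definition of the subbundle $\pi'$, this is an induction, and each fibre $E'_b \subseteq E_b$ carries the subspace diffeology. First I would observe that since $\Gamma$ is a functor from $\DVPB_B$ to diffeological vector spaces (as recalled in Section~\ref{ss:dvpb}), $j$ induces a smooth linear map $j_* : \Gamma(\pi') \to \Gamma(\pi)$ sending a section $s : B \to E'$ to $j \circ s : B \to E$. The crux is to show $j_*$ is a bijection whose inverse is smooth.

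For surjectivity of $j_*$: given any section $t \in \Gamma(\pi)$, for every $b \in B$ we have $t(b) = t(b)$ with $t \in \Gamma(\pi)$, so $t(b) \in E'$ by the very definition of $E'$ as $\{s(b) \mid s \in \Gamma(\pi),\, b \in B\}$; hence $t$ factors set-theoretically through $E'$, say $t = j \circ \bar t$. Since $j$ is an induction, $\bar t : B \to E'$ is smooth, and $\pi' \circ \bar t = 1_B$ because $\pi \circ t = 1_B$ and $\pi \circ j = \pi'$. So $\bar t \in \Gamma(\pi')$ and $j_*(\bar t) = t$. Injectivity of $j_*$ is immediate since $j$ is injective. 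This exhibits $j_*$ as a linear bijection, and moreover the construction just given shows that its inverse is exactly $t \mapsto \bar t$, which is the corestriction of $t$ to $E'$.

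It remains to see that the inverse map $\Gamma(\pi) \to \Gamma(\pi')$, $t \mapsto \bar t$, is smooth. Recall $\Gamma(\pi)$ carries the subset diffeology from $C^\infty(B, E)$ and $\Gamma(\pi')$ the subset diffeology from $C^\infty(B, E')$. A plot of $\Gamma(\pi)$ is a map $W \to C^\infty(B,E)$ whose adjoint $W \times B \to E$ is smooth and lands fibrewise in sections of $\pi$; by surjectivity above, this adjoint actually lands in $E'$, and since $E' \to E$ is an induction, the corestricted adjoint $W \times B \to E'$ is smooth, whose re-adjoint $W \to C^\infty(B, E')$ lands in $\Gamma(\pi')$. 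So the inverse is smooth, and $j_*$ is an isomorphism of diffeological vector spaces.

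The only point requiring any care — and the one I would expect to be the minor obstacle — is the smoothness of the inverse, specifically the passage from a plot of $\Gamma(\pi)$ to a plot of $\Gamma(\pi')$ using that $E' \to E$ is an induction on total spaces (equivalently, on the relevant cartesian-closed adjoints). Everything else is a formal consequence of the definition of $E'$ and the functoriality of $\Gamma$.
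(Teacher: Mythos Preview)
Your proof is correct and is precisely the argument the paper has in mind: the paper's own proof reads simply ``This is straightforward,'' and you have unpacked exactly that straightforward verification using the definition of $E'$ and the fact that the inclusion $E' \hookrightarrow E$ is an induction.
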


\begin{proof}
 This is straightforward.
\end{proof}

In Remark~\ref{rem:cross} and Example~\ref{ex:spaghetti-r} we give examples
of bundles where the inclusion $E' \to E$ is proper.

This phenomenon does not happen in classical differential geometry where
bundles are locally trivial and we have suitable bump functions on manifolds.
In diffeology, we can rule out this behaviour with similar assumptions.
Recall from~\cite[Definition~7.4]{CW5} that a \textbf{$D$-numerable diffeological
vector bundle} is a diffeological vector pseudo-bundle which is locally trivial
and which has a smooth partition of unity subordinate to a trivializing open cover of the base.

\begin{prop}\label{prop:trivial-cotangent}
 Let $\pi:E \to B$ be a $D$-numerable diffeological vector bundle with fibre type $V$.
 Then $E' = E$.
\end{prop}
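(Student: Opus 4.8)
The plan is to reduce the statement to the classical partition-of-unity argument, using the tools assembled in Section~\ref{ss:fibrewise}. Since $E' = E$ means that every element of $E$ lies in the image of some global section, it suffices to fix $b_0 \in B$ and $e_0 \in E_{b_0}$ and produce a section $s \in \Gamma(\pi)$ with $s(b_0) = e_0$. The idea is to build $s$ locally over each member of a trivializing open cover and then glue using a smooth partition of unity, exactly as one does for ordinary vector bundles; the work is in checking that each step stays inside the diffeological category.

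First I would fix a trivializing $D$-open cover $\{B_i\}_{i \in I}$ of $B$ together with a smooth partition of unity $\{\phi_i\}$ subordinate to it, whose existence is part of the definition of $D$-numerable. Over each $B_i$ we have a diffeomorphism $f_i : \pi^{-1}(B_i) \to B_i \times V$ compatible with the projections and linear on fibres, so $\Gamma(\pi^{-1}(B_i) \to B_i) \cong C^\infty(B_i, V)$. In particular the constant map $B_i \to B_i \times V$ at the value $v_0 := f_{i_0}(e_0) \in V$, for some fixed $i_0$ with $b_0 \in B_{i_0}$, pulls back under $f_{i_0}^{-1}$ to a local section $t_{i_0}$ over $B_{i_0}$ with $t_{i_0}(b_0) = e_0$; over the other $B_i$ take $t_i$ to be the zero section. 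Each $\phi_i \cdot t_i$, extended by the zero section outside $\Supp(\phi_i)$, is a well-defined smooth global section of $\pi$: on a $D$-open neighbourhood of any point it is either $\phi_i t_i$ (smooth since fibrewise scalar multiplication is smooth, by the pseudo-bundle axioms) or identically zero, and these agree on the overlap because $\phi_i$ vanishes there; condition~(3) in the definition of a diffeological space lets us conclude smoothness globally. Then $s := \sum_{i} \phi_i t_i$ is a finite sum locally (since the supports are locally finite, which is what subordinateness to the cover buys us), and it is a section by the pseudo-bundle axioms applied to fibrewise addition. Evaluating at $b_0$ and using $\sum_i \phi_i(b_0) = 1$ together with the fact that every $t_i$ that is nonzero near $b_0$ equals $e_0$ there — this requires arranging the $t_i$ on overlapping charts to be compatible, which one can do by transporting $e_0$ through the transition functions, or more simply by only using the single chart $B_{i_0}$ and the zero section elsewhere, at the cost of $\sum \phi_i(b_0)$ possibly being $\phi_{i_0}(b_0) \ne 1$ — I would instead take $s := \phi_{i_0} t_{i_0} + (\text{rescaling})$, or cleanest: replace $t_{i_0}$ by $t_{i_0}/\phi_{i_0}(b_0)$ is illegitimate, so the honest route is to spread $e_0$ across all charts meeting $b_0$ via the transition diffeomorphisms so that $s(b_0) = \sum_i \phi_i(b_0) e_0 = e_0$.

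The main obstacle, and the only place any care is needed, is exactly this last bookkeeping: ensuring that the locally defined sections $t_i$ can be chosen so that they all take the value $e_0$ at $b_0$ (on the charts containing $b_0$) while remaining globally smooth after multiplication by $\phi_i$ and extension by zero. This is handled by the usual manifold argument — pick the value $v_0 = f_{i_0}(e_0)$ in the model fibre $V$, and on each $B_i$ containing $b_0$ let $t_i = f_i^{-1}(\,\cdot\,, g_{i i_0} v_0)$ where $g_{i i_0}$ is the fibrewise-linear transition automorphism of $V$ determined by $f_i \circ f_{i_0}^{-1}$ at $b_0$; extend $t_i$ smoothly (e.g.\ by the same formula where the charts overlap, or just by $0$ away from a neighbourhood of $b_0$ inside $B_i$). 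After multiplying by $\phi_i$ the discontinuity at $\partial \Supp \phi_i$ disappears, and $s = \sum_i \phi_i t_i$ is the desired section with $s(b_0) = \bigl(\sum_i \phi_i(b_0)\bigr) e_0 = e_0$. Everything else — that scalar multiples and finite fibrewise sums of smooth sections are smooth sections — is immediate from Definition~\ref{de:dvpb}, and the locally-finite extension-by-zero is justified by axiom~(3) for diffeological spaces.
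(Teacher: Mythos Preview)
Your proposal is correct and follows the same partition-of-unity strategy as the paper, but you take a detour that the paper avoids. The paper's argument is a single step: choose an index $i$ with $b_0 \in B_i$ \emph{and} $\rho_i(b_0) \neq 0$ (such an $i$ exists since $\sum_j \rho_j(b_0) = 1$), pick a local section $s$ over $B_i$ with $s(b_0) = e_0$ via the trivialization, and then $\dfrac{\rho_i}{\rho_i(b_0)} \cdot s$, extended by zero off $\Supp(\rho_i)$, is a global section hitting $e_0$ at $b_0$.

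Your dismissal of ``replace $t_{i_0}$ by $t_{i_0}/\phi_{i_0}(b_0)$'' as illegitimate is exactly where you miss this shortcut: $\phi_{i_0}(b_0)$ is a \emph{constant}, and once you arrange for it to be nonzero (you only asked that $b_0 \in B_{i_0}$, not that $\phi_{i_0}(b_0) \neq 0$, but the latter is free), dividing by it is just fibrewise scalar multiplication by a fixed real number --- smooth by Definition~\ref{de:dvpb}. With that one observation the transition-function bookkeeping in your final paragraph becomes unnecessary, and the proof collapses to three lines. Your longer route via transition maps does work, but it is the harder way around.
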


\begin{proof}
 Let $\{B_i\}$ be a $D$-open cover of the base space $B$ so that $\pi|_{B_i}$ is trivial,
 and let $\{\rho_i:B \to \R\}$ be a smooth partition of unity
 subordinate to this cover. For any $b_0 \in B$ and any $x \in E_b$, we may choose $i$ so that $b_0 \in B_i$
 and $\rho_i(b_0) \neq 0$. Since $\pi|_{B_i}$ is trivial, there is a section $s$ of $\pi|_{B_i}$ with $s(b_0) = x$.
 Then $\frac{\rho_i}{\rho_i(b_0)} \cdot s:B \to E$ is a well-defined section of $\pi$ such that its value at $b_0$ is $x$.
 This implies that $E' = E$.
\end{proof}

\subsubsection{Module structures}

For any diffeological space $B$,
$C^\infty(B,\R)$ is a smooth commutative unital $\R$-algebra under pointwise operations.
Given a diffeological vector pseudo-bundle $\pi : E \to B$,
the diffeological vector space $\Gamma(\pi)$ of sections is naturally a
smooth $C^\infty(B,\R)$-module under pointwise multiplication and $\Gamma$
can be regarded as a functor from $\DVPB_B$ to the category of smooth $C^{\infty}(B,\R)$-modules.

More generally, for diffeological vector pseudo-bundles $\pi_1$ and $\pi_2$
over $B$, the diffeological vector space $\DVPB_B(\pi_1, \pi_2)$ is a
smooth $C^{\infty}(B,\R)$-module and the isomorphism
$\DVPB_B(\pi_1,\pi_2) \cong \Gamma(\Hom_B(\pi_1,\pi_2))$ of Lemma~\ref{lem:Hom-Gamma}
is an isomorphism of such modules.

For a smooth map $f : B' \to B$,
$f^*:C^\infty(B,\R) \to C^\infty(B',\R)$ is a smooth algebra homomorphism,
so that $\Gamma(\pi')$ can also be viewed as a smooth $C^\infty(B,\R)$-module,
for $\pi'$ a diffeological vector pseudo-bundle over $B'$.
With this structure, the map $\Gamma(\pi) \to \Gamma(f^*(\pi))$ described in
Section~\ref{ss:dvpb} is a smooth $C^\infty(B,\R)$-module homomorphism.

Similar remarks apply to other maps considered throughout the paper.

\section{Bundles of \texorpdfstring{$k$}{k}-forms}\label{se:duality}

In this section, we give a new definition of differential $k$-forms on a
diffeological space $X$ and compare it to the existing definition from~\cite{I2};
see Section~\ref{ss:cotangent} for a short summary.
We define the new differential forms to be
\[
  \widetilde{\Omega}^k(X) := \Gamma((\Exterior^k T^{\dvs} X)^* \to X),
\]
just as in ordinary differential geometry.
It is straightforward to check that $\widetilde{\Omega}^k$ is a contravariant functor
from $\Diff$ to the category of diffeological vector spaces.
We will see that there is a natural map $\phi : \widetilde{\Omega}^k(X) \to \Omega^k(X)$
which is an isomorphism when $k=0$ or $1$ and for all $k$ when $X$ is filtered.
It can fail to be injective and surjective in general.

There are simple situations in which the new forms contain more information.
For example, consider the inclusion $i$ of the axes $X_s$ into $\R^2$,
where the axes have the subset diffeology.
Let $\omega := i^*(dx \wedge dy)$ be the pullback of the volume form on $\R^2$.
On the one hand, when considered as a form in $\Omega^2(X_s)$, $\omega$ is zero.
To see this, it is enough to show that for any pointed plot $p:(U,0) \to (X_s,0)$,
we have $\omega_p(0) = 0$. This follows from the fact that for any $v_1,v_2 \in T_0(U)$,
$p_*(v_1)$ is parallel to $p_*(v_2)$ by definition of the diffeology on $X_s$.
On the other hand, when considered as a form in $\widetilde{\Omega}^2(X_s)$,
$\omega$ is non-zero at the origin, reflecting the fact that there
are tangent vectors $\ddx$ and $\ddy$ in $T_0^{\dvs}(X_s)$ such that
$\omega(\ddx, \ddy) = (dx \wedge dy)(i_* \ddx, i_* \ddy) = 1$.
The key difference is that the new definition considers tangent
vectors coming from \emph{different} plots, and thus sees $\omega$ to be non-zero.

In order to compare the two definitions, we first show in Section~\ref{ss:Omega}
that $\Omega^k(X)$ can be described as sections of the dual of a bundle we denote $T^k X$.
Then, in Section~\ref{ss:compare}, we compare $T^k X$ to $\Exterior^k T^{\dvs} X$,
showing that they agree when $X$ is filtered, or $k = 0$ or $1$.

\subsection{Differential forms as sections}\label{ss:Omega}

Let $X$ be a fixed diffeological space and let $k \geq 0$.
We define the diffeological vector pseudo-bundle $T^k X$ over $X$
to be the colimit in $\DVPB$ of the bundles $\Exterior^k TU \to U$
indexed by the plot category of $X$,
where $TU \to U$ is the usual tangent bundle and we identify
$\colim U$ with $X$.
When $k=1$, this gives the internal tangent bundle $T^{\dvs}X$,
by \cite[Theorem~4.17]{CW2}, and this special case can be seen
as a motivation for the above definition.

\begin{thm}\label{thm:Omega}
For any diffeological space $X$ and any $k \geq 0$, there is a natural isomorphism
\[
  \Gamma((T^kX)^* \to X) \lra \Omega^k(X)
\]
of diffeological vector spaces.
\end{thm}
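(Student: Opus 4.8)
The plan is to chain together the constructions of Section~\ref{se:operations} so that both sides become the same limit. First I would note that $(T^kX)^*$ is by definition $\Hom_X(T^kX,\, X \times \R)$, so Lemma~\ref{lem:Hom-Gamma} gives an isomorphism
\[
  \Gamma((T^kX)^*) \;\cong\; \DVPB_X(T^kX,\, X \times \R)
\]
of diffeological vector spaces, and it remains to identify the right-hand side with $\Omega^k(X)$. Since $T^kX$ is by construction the colimit in $\DVPB$ of the functor $F : \Plot(X) \to \DVPB$ sending a plot $p : U \to X$ to $\Exterior^k TU \to U$, with $\colim bF$ identified with $X$, Proposition~\ref{prop:colimit-limit} applies with $\pi = \Pr_1 : X \times \R \to X$ and yields a diffeomorphism between $\DVPB_X(T^kX,\, X\times\R)$ and the subspace of $\lim_{p}\DVPB(\Exterior^k TU,\, X\times\R)$ consisting of compatible families $(y_p)$ with each $y_p$ lying over the base map $p : U \to X$.

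Next I would identify, for a fixed plot $p : U \to X$, the set of bundle maps $\Exterior^k TU \to X\times\R$ lying over $p$ with $\Omega^k(U)$. By the universal property of the pullback, such a bundle map is the same as a bundle map $\Exterior^k TU \to p^*(X\times\R) \cong U\times\R$ over $U$, that is, an element of $\DVPB_U(\Exterior^k TU,\, U\times\R) = \Gamma((\Exterior^k TU)^*) = \Omega^k(U)$; unwinding the functional diffeology on the $\DVPB$ hom-set and the diffeology on $\Omega^k(U)$ (which is precisely the section diffeology of $(\Exterior^k TU)^*$) shows this bijection is a diffeomorphism, and it is linear because on the bundle-map side the addition is the fibrewise one, which is well-defined once the base component is pinned to $p$. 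For a morphism $f : U \to V$ in $\Plot(X)$, with $q\circ f = p$, the transition map of the limit is precomposition with $\Exterior^k Tf : \Exterior^k TU \to \Exterior^k TV$, which under this identification is exactly the pullback $f^* : \Omega^k(V) \to \Omega^k(U)$. Hence the subspace of compatible families from the previous paragraph is precisely $\lim_{\Plot(X)^{\op}}\Omega^k(U)$, which is by definition $\Omega^k(X)$ as a diffeological vector space. Composing the three identifications gives the asserted isomorphism, and naturality in $X$ is immediate since every step is functorial in $X$.

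The step that will need the most care is the diffeological bookkeeping rather than anything conceptual: one must check that the subset-of-the-limit diffeology appearing in Proposition~\ref{prop:colimit-limit} matches the limit diffeology defining $\Omega^k(X)$, and that the pointwise identification of bundle maps over $p$ with $\Omega^k(U)$ genuinely respects all the functional diffeologies involved (a plot $W \to \{\,y \in \DVPB(\Exterior^k TU, X\times\R) : y \text{ lies over } p\,\}$ is, after composing with $\Pr_2$ and using cartesian closedness, a smooth fibrewise-linear map $W \times \Exterior^k TU \to \R$, which is exactly a plot of $\Omega^k(U)$). I would isolate this computation as a short preliminary lemma and then the rest of the argument is formal. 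I do not expect a serious obstacle beyond this.
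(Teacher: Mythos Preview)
Your proposal is correct and follows essentially the same route as the paper: apply Lemma~\ref{lem:Hom-Gamma}, then Proposition~\ref{prop:colimit-limit} to the defining colimit of $T^kX$, identify each component over a plot $p$ with $\DVPB_U(\Exterior^k TU,\,U\times\R)\cong\Omega^k(U)$, and assemble into $\lim\Omega^k(U)=\Omega^k(X)$. The paper handles the diffeological check you flag in one line (``easily checked to be a diffeomorphism''), so your plan to isolate it as a short lemma is slightly more careful but not a different argument.
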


\begin{proof}
 As usual, the diffeology on the space of sections is the subset diffeology from the
 space of all maps from $X$ to the total space,
 and $\Omega^k(X)$ is defined to be $\lim \Omega^k(U)$ as
 diffeological vector spaces; see the beginning of Section~\ref{ss:cotangent}.

 The isomorphism is given by the following composite:
 \begin{align*}
  &\Gamma((T^kX)^* \to X) \\
  \cong\; &\DVPB_X(T^kX \to X,\, X \times \R \to X) \\
  =\; &\DVPB_X(\colim (\Exterior^k TU \to U),\, X \times \R \to X) \\
  \cong\; &\{f \in \lim \DVPB(\Exterior^k TU \to U,\, X \times \R \to X) \mid
  f_p \text{ lies over the plot } p : U \to X\} \\
  \cong\; &\lim \DVPB_U(\Exterior^k TU \to U,\, U \times \R \to U) \\
  \cong\; &\lim \Gamma((\Exterior^k TU)^* \to U)
  = \lim \Omega^k(U)
  = \Omega^k(X).
 \end{align*}
 The first isomorphism is Lemma~\ref{lem:Hom-Gamma}.
 The isomorphism in the fourth line is Proposition~\ref{prop:colimit-limit}.
 For each plot $p : U \to X$, giving the component $f_p$ of $f$
 is equivalent to giving a map from $\Exterior^k TU$ to $U \times \R$
 over the identity map, which gives the isomorphism in the fifth line.
 This isomorphism is easily checked to be a diffeomorphism.
 The first isomorphism in the last line is again Lemma~\ref{lem:Hom-Gamma}.
 The last two equalities are true by definition.
\end{proof}

\begin{rem}
 We now describe the fibres of the bundle $(T^k X)^* \to X$.
 By definition, the fibre over $x \in X$ is the smooth dual of $T^k_x (X)$.
 The construction of colimits in Theorem~\ref{thm:DVPB-bicomplete} shows
 that the latter is isomorphic to $\colim \Exterior^k T_0(U)$,
 where the colimit is taken over the category of pointed plots $(U,0) \to (X,x)$.
 The diffeology on the colimit is complicated, but we can say that the
 smooth dual of $T^k_x(X)$, as a vector space, is a subspace of
 $\Vect(\colim \Exterior^k T_0(U),\, \R) \cong \lim \Vect(\Exterior^k T_0(U),\, \R)$.
 That is, an element of $(T^k_x(X))^*$ is a compatible family of
 $k$-forms at $0 \in U$, indexed by the pointed plots $(U,0) \to (X,x)$,
 subject to a smoothness condition.
\end{rem}

\begin{rem}\label{rem:Uplambda-Exterior}
In~\cite[Chapter 6]{I2}, it is shown that $\Omega^k(X) \cong \Gamma(\Uplambda^k X)$,
where $\Uplambda^k X$ is the bundle described in Section~\ref{ss:cotangent}.
We now show that there is an injective bundle map $\Uplambda^k X \to (T^k X)^*$.
As a vector space,
the fibre $\Uplambda^k_x (X)$ of $\Uplambda^k X$ over a point $x \in X$
is defined to be the quotient of $\Omega^k(X)$ where two $k$-forms are identified
if they agree at the origin when pulled back along any pointed plot $p:(U,0) \to (X,x)$.
This is the same as saying that they agree when viewed as linear maps
$\Exterior^k T_0(U) \to \R$ for each such $p$.
By the description of the fibres of $T^k X$ in the previous remark,
it follows that $\Uplambda^k_x (X)$ is isomorphic as a vector space to the image of the restriction map
$\Omega^k(X) \to (T^k_x (X))^*$.
In fact, we have an injective bundle map $\Uplambda^k X \to (T^k X)^*$ whose
image is precisely the subbundle reachable by sections (see Section~\ref{sss:E'}).
In particular, this map induces a bijection on sections.
The difference between the two bundles is that in $(T^k X)^*$ one considers
$k$-forms defined at just one point, while in $\Uplambda^k X$ one
only considers forms that extend globally to $X$.
In Example~\ref{ex:spaghetti-r}, we will see that these differ in general.
Allowing pointwise forms gives additional flexibility.

We don't know whether $\Uplambda^k X$ has the subset diffeology from $(T^k X)^*$,
i.e., whether $\Uplambda^k X$ is diffeomorphic to the subbundle reachable by
sections.
\end{rem}

\subsection{Comparing the two definitions of forms}\label{ss:compare}

We now compare the two definitions of differential forms.
To do so, we define a comparison map of the underlying bundles.

Let $X$ be a fixed diffeological space.
Given a plot $p : U \to X$, there is a natural map
\[
\xymatrix{TU \ar[r]^-{Tp} \ar[d] & T^{\dvs} X \ar[d] \\ U \ar[r]_p & X}
\]
of diffeological vector pseudo-bundles.
This induces a natural bundle map
\[
\xymatrix @C+0.7pc {\Exterior^k TU \ar[r]^-{\Exterior^k Tp} \ar[d] & \Exterior^k T^{\dvs} X \ar[d] \\ U \ar[r]_p & X}
\]
and therefore a canonical bundle map $\rho : T^k X \to \Exterior^k T^{\dvs} X$ over $X$.
The map $\rho$ induces a map
\[
  \rho^* : (\Exterior^k T^{\dvs} X)^* \lra (T^k X)^*
\]
of dual bundles over $X$, which induces a map
\[
  \phi : \widetilde{\Omega}^k(X) \lra \Omega^k(X)
\]
of diffeological vector spaces,
using the definition of $\widetilde{\Omega}^k(X)$ and Theorem~\ref{thm:Omega}.

We can now prove the main result of this section.

\begin{thm}\label{thm:recover-forms}
 Let $X$ be a diffeological space and let $k$ be a natural number.
 If $k = 0$ or $1$, or $X$ is filtered, then the map
 \[
  \rho : T^k X \to \Exterior^k T^{\dvs} X
 \]
 is an isomorphism in $\DVPB_X$ and the map
 \[
  \phi : \widetilde{\Omega}^k(X) \lra \Omega^k(X)
 \]
 is an isomorphism of diffeological vector spaces.
\end{thm}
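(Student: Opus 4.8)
The plan is to reduce everything to the claim that $\rho : T^k X \to \Exterior^k T^{\dvs} X$ is an isomorphism of diffeological vector pseudo-bundles over $X$, since the statement about $\phi$ then follows formally: $\rho^*$ is an isomorphism of dual bundles, hence induces an isomorphism on sections, and under Theorem~\ref{thm:Omega} and the definition $\widetilde{\Omega}^k(X) = \Gamma((\Exterior^k T^{\dvs} X)^* \to X)$ this is exactly $\phi$. So I focus on $\rho$. Since isomorphisms in $\DVPB_X$ can be checked on fibres together with the diffeology, I would work fibrewise: fix $x \in X$ and examine $\rho_x : T^k_x X \to \Exterior^k T^{\dvs}_x X$. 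By the construction of colimits in Theorem~\ref{thm:DVPB-bicomplete} and the remark following Theorem~\ref{thm:Omega}, $T^k_x X \cong \colim \Exterior^k T_0(U)$ over the pointed plot category $\Plot(X,x)$, while $T^{\dvs}_x X \cong \colim T_0(U)$ over the same category (using \cite[Theorem~4.17]{CW2} for the $k=1$ identification, and the fact that $T^{\dvs}$ is obtained by dvsification of Hector's diffeology, which is built from the same colimit).

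The first genuine step is the \emph{vector space} statement: that the canonical map $\colim_U \Exterior^k T_0(U) \to \Exterior^k (\colim_U T_0(U))$ is a linear isomorphism. For $k = 0$ both sides are $\R$ and for $k = 1$ both sides are literally $\colim T_0(U)$, so there is nothing to prove. For general $k$ when $X$ is filtered, I would use that $\Plot(X,x)$ (or rather the germ category $\mathcal{G}(X,x)$, which computes the same colimit) is filtered: over a filtered index category, $k$-th exterior power commutes with the colimit. Concretely, any element of $\Exterior^k(\colim T_0(U))$ is a finite sum of wedges $v_1 \wedge \cdots \wedge v_k$ with each $v_j$ represented in some $T_0(U_j)$; filteredness lets all the relevant plots factor through a common pointed plot $(W,0) \to (X,x)$, so the whole element is the image of something in $\Exterior^k T_0(W)$, giving surjectivity; a similar factorization argument handles injectivity (two elements of the colimit mapping to the same place in $\Exterior^k(\colim T_0(U))$ can be pushed into a common $W$ where the relations already hold, because the linear relations defining $\Exterior^k$ are finitely generated). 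This is the place where the filtered hypothesis is essential, and I expect it to be the main obstacle — one must be careful that the relations in $\Exterior^k$ of the colimit, not just its generators, come from a single stage, and that the morphisms in $\mathcal{G}(X,x)$ are germs rather than honest maps (so one should state things at the level of germs and invoke the coequalizing condition in the definition of "filtered" to handle parallel morphisms).

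The second step is to upgrade the vector-space isomorphism $\rho_x$ to a diffeomorphism, i.e.\ to show $\rho$ is an isomorphism in $\DVPB_X$, not just a fibrewise-linear bijection. Here I would argue at the level of total spaces using the explicit description of plots. By the construction in Theorem~\ref{thm:DVPB-bicomplete}, the diffeology on $T^k X$ is the dvsification of the diffeology generated by the maps $\Exterior^k TU \to T^k X$; by the fibrewise exterior power description in Section~\ref{ss:fibrewise}, the diffeology on $\Exterior^k T^{\dvs} X$ is generated by wedges of plots of $T^{\dvs} X$, and plots of $T^{\dvs} X$ are in turn (after dvsification) fibrewise linear combinations of plots of the form $U \to TV \to T^{\dvs} X$. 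Both generating families are built from the maps $\Exterior^k Tp$, so $\rho$ is smooth and, running the identifications backward, its inverse is smooth too: a generating plot of $\Exterior^k T^{\dvs} X$ is locally $v \mapsto \sum_i r_i(v)\, w_{i,1}(v)\wedge\cdots\wedge w_{i,k}(v)$ with the $w_{i,j}$ plots of $T^{\dvs} X$, and using filteredness one arranges (locally in $v$) that all the $w_{i,j}$ factor through a single $TW \to T^{\dvs} X$, exhibiting the plot as coming from $\Exterior^k TW \to T^k X$. Finally, for $k = 0$ and $k = 1$ the whole theorem is immediate: $\Exterior^0$ of any bundle is $X \times \R$ and $T^0 X$ is the colimit of the trivial bundles $U \times \R$, which is again $X \times \R$; and $\Exterior^1 \pi = \pi$ with $T^1 X = T^{\dvs} X$ by definition and \cite[Theorem~4.17]{CW2}. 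I would present the $k=0,1$ cases first as a warm-up, then give the filtered case.
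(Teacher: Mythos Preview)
Your proposal is correct and follows essentially the same route as the paper: reduce to showing $\rho$ is an isomorphism, dispatch $k=0,1$ directly, and in the filtered case establish first that $\rho$ is a fibrewise linear bijection (because $\Exterior^k$ commutes with filtered colimits in $\Vect$) and then that it is a subduction by locally lifting a wedge-sum plot of $\Exterior^k T^{\dvs}X$ through a common plot of $X$. The only differences are cosmetic: the paper invokes a finitary-functor criterion from~\cite{AMSW} for the filtered-colimit preservation rather than arguing it by hand, and in the subduction step it is explicit that filteredness must also \emph{coequalize} the base germs $g_{i,j}\circ\pi_{U_{i,j}}\circ f_{i,j}$ so that the lifted tangent vectors land in a common fibre of $TW$ --- a subtlety you correctly flag in the first step and should invoke again in the second.
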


The case where $k = 1$ is also proved in~\cite[Proposition~4.6]{MW}.

\begin{proof}
 It is clear that when $\rho$ is an isomorphism, so is $\phi$,
 using Theorem~\ref{thm:Omega}.
 So we just need to prove the first claim.

 Let $k = 0$.  Then $\Exterior^0 TU = U \times \R$, and from the
 description of colimits in $\DVPB$ given in Theorem~\ref{thm:DVPB-bicomplete},
 one sees that $T^0 X = X \times \R$.
 By definition, $\Exterior^0 T^{\dvs} X = X \times \R$.
 It is not hard to see in this case that $\rho : T^0 X \to \Exterior^0 T^{\dvs} X$
 is an isomorphism.

 Let $k = 1$.  Then the claim that $\rho$ is an isomorphism is saying
 that $T^{\dvs}X$ is the colimit in $\DVPB$ of the bundles $TU \to U$,
 which is \cite[Theorem~4.17]{CW2}.

 Finally, suppose that $X$ is filtered.
 We will show that $\rho$ is a bijection, and that its inverse is also smooth.
 For the first step, we will show that $\rho$ is a bijection on each fibre.
 Let $x \in X$.
 Recall that $T^k_x(X)$ is the colimit of $\Exterior^k T_0(U)$ over all pointed plots $(U,0) \to (X,x)$
 in the germ category $\mathcal{G}(X,x)$; see Theorem~\ref{thm:DVPB-bicomplete}.
 Since this indexing category is filtered, it suffices to show that the exterior
 power functor $\Exterior^k : \Vect \to \Vect$ preserves filtered colimits.
 By~\cite[Corollary~3.13 and Examples~3.19(2) and~3.23(4)]{AMSW}, it suffices to show
 that for every $V \in \Vect$ and every finite-dimensional subspace $W$ of $\Exterior^k V$,
 there exists a finite-dimensional subspace $V'$ of $V$ such that $W$ is
 contained in the subspace $\Exterior^k V'$ of $\Exterior^k V$.
 Given such a $W$, it is the span of a finite list of vectors in $\Exterior^k V$,
 each of which involves only finitely many vectors in $V$.
 Taking $V'$ to be the span of all of the vectors involved ensures
 that $\Exterior^k V'$ contains $W$.

 We now show that $\rho^{-1}$ is smooth, or equivalently, that $\rho$ is a subduction.
 Suppose given a plot $q : V \to \Exterior^k TX$.
 Without loss of generality, we'll assume that $0 \in V$, and we'll show that
 $\rho^{-1} \circ q$ is smooth in a neighbourhood of $0$.
 That is, we'll show that $q$ is locally of the form $\rho \circ r$ for some plot $r$
 in $T^kX$.
 Let $q' = \pi^k_X \circ q : V \to X$ and let $x_0 = q'(0)$.
 Each time we work locally, we'll implicitly reduce the size of $V$.
 Since $\Exterior^k TX$ is a quotient of $F_X(\prod_{k,X} TX)$, $q$ can be expressed
 locally as a finite sum $\sum_i r_i(v) \, q_{i,1}(v) \wedge \cdots \wedge q_{i,k}(v)$,
 where $r_i : V \to \R$ and $q_{i,j} : V \to TX$ are smooth, and $\pi_X \circ q_{i,j} = q'$,
 using Proposition~\ref{pr:free}.
 Since we are forming a sum, we can assume that each $q_{i,j}$ is in Hector's
 diffeology, so locally each $q_{i,j}$ factors as $Tp_{i,j} \circ f_{i,j}$ for some plots
 $p_{i,j} : U_{i,j} \to X$ and smooth maps $f_{i,j} : V \to TU_{i,j}$.
 Without loss of generality, we assume that $\pi_{U_{i,j}}(f_{i,j}(0)) = 0 \in U_{i,j}$.
 It follows that the plots $p_{i,j} : (U_{i,j},0) \to (X,x_0)$ are pointed.
 Since $X$ is filtered, these plots all locally factor through a common plot
 $p : (U, 0) \to (X, x_0)$ in a way that coequalizes the germs $\pi_{U_{i,j}} \circ f_{i,j}$.
 That is, there exist smooth maps $g_{i,j}:U_{i,j} \to U$ such that $p_{i,j} = p \circ g_{i,j}$
 (and hence $Tp_{i,j} = Tp \circ Tg_{i,j}$) and the composite $g_{i,j} \circ \pi_{U_{i,j}} \circ f_{i,j}$
 is independent of the subscripts $i,j$, all as germs.
 Therefore, $q = \rho \circ r$, where the germ $r:V \to T^k X$ around $0$ is defined by 
 \[
  r(v) = \iota_U\big( \sum_i r_i(v) (Tg_{i,1}(f_{i,1}(v))) \wedge \cdots \wedge (Tg_{i,k}(f_{i,k}(v))) \big),
 \]
 which is clearly a plot of $T^k X$. Here $\iota_U:\Exterior^k TU \to T^k X$ is the map expressing $T^k X$ as a colimit.
\end{proof}

A similar result holds for tensor powers, using a similar proof.

\begin{rem}\label{rem:weak}
 One can show that when $X$ is only assumed to be weakly filtered,
 it still follows that the bundle map
 $\rho:T^k X \to \Exterior^k T^{\dvs} X$ is surjective.
 Hence, in this case, $\rho^*:(\Exterior^k T^{\dvs}X)^* \to (T^k X)^*$ is injective, and therefore,
 the canonical map $\phi : \widetilde{\Omega}^k(X) \to  \Omega^k(X)$ is injective.
 We show in Example~\ref{ex:R2Z2} that when $X$ is
 weakly filtered, $\rho$ is not necessarily injective
 and $\phi$ is not necessarily surjective.

 Moreover, we show in Example~\ref{ex:Xs} that when $X$ is not weakly filtered,
 $\rho$ is not necessarily surjective and $\phi$ is not necessarily injective.
 We give another such example in Propositions~\ref{prop:not-surjective} and~\ref{prop:Phi-not-isom}.
\end{rem}

\begin{ex}\label{ex:R2Z2}
 Let the cyclic group $\Z_2$ act on $\R^2$ by negation, and write $X$ for the orbit space with the quotient diffeology.
 Then, by an argument similar to~\cite[Example~4.7]{CW3},
 $X$ is weakly filtered at $[0]$ but not filtered at $[0]$,
 where $[0]$ the image of the origin of $\R^2$.

 To show that $\rho : T^2 X \to \Exterior^2 T^{\dvs} X$ is not injective,
 we'll show that the vector space $\Exterior^2 T_{[0]}(X) = 0$ while $T^2_{[0]} (X) \neq 0$.
 First note that the quotient map $q : \R^2 \to X$ induces a surjection
 $T_0(\R^2) \to T_{[0]}(X)$, since pointed plots to $(X, 0)$ locally lift to $(\R^2, 0)$.
 Thus any element $v \in T_{[0]}(X)$ is of the form $(q \circ p)_*(\ddt)$
 where $p : \R \to \R^2$ is a linear plot.
 Since $q \circ p = q \circ (-p)$, we deduce that $v = -v$, so $v = 0$.
 It follows that $\Exterior^2 T_{[0]}(X) = 0$.

 Next we show that $T^2_{[0]} (X) \neq 0$.
 We know from~\cite[9.32]{I2} (see also~\cite{KW}) that $q$ induces an injection
 $q^* : \Omega^2(X) \to \Omega^2(\R^2)$ with image
 $\{f(x,y) \, dx \wedge dy \mid f \text{ is smooth and $\Z_2$-invariant} \}$.
 Let $\omega \in \Omega^2(X)$ be such that $q^*(\omega) = dx \wedge dy$.
 Regard $\omega$ as a bundle map $T^2 X \to X \times \R$ over $X$, using
 that $\Omega^2(X) \cong \DVPB_X(T^2 X, X \times \R)$,
 as in the proof of Theorem~\ref{thm:Omega}.
 Then $q^*(\omega)$ is represented by the bundle map $T^2 \R^2 \to \R^2 \times \R$ over $\R^2$
 induced by the composite $T^2\R^2 \to T^2X \to X \times \R$.
 Now $T^2_0(\R^2)$ is isomorphic to $\Exterior^2 T_0(\R^2)$
 and $dx \wedge dy$ is non-zero on this fibre,
 so it follows that $T^2_{[0]}(X)$ must be non-zero.
 Therefore, $\rho$ is not injective.

 We now show that the map $\phi : \widetilde{\Omega}^2(X) \to  \Omega^2(X)$
 is not surjective by showing that $\omega$ defined above is not in the image.
 If it were, then there would be a bundle map $\omega'$ over $X$ making the following
 diagram commute:
 \[
  \xymatrix{
    T^2 X \ar[r]^{\omega} \ar[d]_{\rho} & X \times \R \\
    \Exterior^2 T^{\dvs} X \ar[ur]_{\omega'} .
  }
 \]
 But $\omega$ is non-zero on the fibre $T^2_{[0]}(X)$ and $\Exterior^2 T_{[0]}(X) = 0$,
 so this is not possible.
\end{ex}

\begin{ex}\label{ex:Xs}
As in the introduction to Section~\ref{se:duality},
write $X_s$ for the axes in $\R^2$ with the subset diffeology.
Note that $X_s$ is not weakly filtered.
Let $\omega := i^*(dx \wedge dy) \in \widetilde{\Omega}(X_s)$,
where $i : X_s \to \R^2$ denotes the inclusion.
The earlier calculation showed that $\omega$ is non-zero but $\phi(\omega)$ is zero.
Therefore, $\phi$ is not injective.
It then follows immediately that $\rho^*$ is not injective
and that $\rho$ is not surjective.
\end{ex}

\section{The axes in \texorpdfstring{$\R^2$}{R2}}\label{se:axes}

In this section, we focus on the diffeological space $X_g$, the axes in
$\R^2$ with the gluing diffeology.  In other words, $X_g$ is
the quotient space of $(\R \times \{0\}) \coprod (\{0\} \times \R)$
with the two copies of the origin identified.
We consider it as a subset of $\R^2$, but it has fewer plots than $X_s$,
the axes with the subset diffeology (Example~\ref{ex:Xs}).
We give complete descriptions of $T^{\dvs} X_g$, vector fields on $X_g$,
and $\Omega^1(X_g)$, and prove the results mentioned in Remark~\ref{rem:weak}.
The methods illustrate the usefulness of the material in Section~\ref{se:operations}.

We know from~\cite[Example~3.17]{CW2}
that the tangent space of $X_g$ at the origin is $2$-dimensional, while the other tangent spaces are $1$-dimensional.
Moreover, not every tangent vector at the origin is $1$-representable, so $X_g$ is not weakly filtered.
(Recall that a tangent vector $v$ in the tangent space $T_x(X)$ is $1$-representable if there exists a pointed plot 
$p:(\R,0) \to (X,x)$ such that $v = p_*(\ddt)$. This is not always the case for a 
general diffeological space~\cite[Remark~3.8]{CW2}, but is true for every weakly filtered space~\cite[Proposition~4.11]{CW3}.)
By~\cite[Example~4.19(2)]{CW2}, each tangent space has the fine diffeology.

We begin by giving an explicit description of the tangent bundle $\pi : T^{\dvs}X_g \to X_g$.
We make use of the following diffeological vector pseudo-bundles.
Let $E_1$ be the gluing of $(\R \times \{0\}) \times \R$ and
$(\{0\} \times \R) \times \{0\}$ at $(0,0,0)$, regarded as a subset of $\R^3$,
and define $\pi_1 : E_1 \to X_g$ to be the map sending $(x,y,v)$ to $(x,y)$.
This is a diffeological vector pseudo-bundle which has $1$-dimensional fibres
over the $x$-axis in $X_g$, and $0$-dimensional fibres elsewhere.
Similarly, let $E_2$ be the gluing of
$(\R \times \{0\}) \times \{0\}$ and $(\{0\} \times \R) \times \R$ at $(0,0,0)$,
with $\pi_2 : E_2 \to X_g$ sending $(x,y,v)$ to $(x,y)$.

\begin{prop}\label{prop:tangent-as-sum}
 $T^{\dvs} X_g \cong E_1 \oplus E_2$ as diffeological vector pseudo-bundles over $X_g$.
\end{prop}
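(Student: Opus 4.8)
The plan is to build the isomorphism $T^{\dvs}X_g \cong E_1 \oplus E_2$ directly from the universal property of the direct sum (coproduct) in $\DVPB_{X_g}$ together with the description of $T^{\dvs}X_g$ as a colimit of tangent bundles of plots. First I would observe that $X_g$ is the pushout of $\R\times\{0\} \leftarrow \{0\} \rightarrow \{0\}\times\R$, and that the two inclusions $\iota_x : \R \cong \R\times\{0\} \hookrightarrow X_g$ and $\iota_y : \{0\}\times\R \hookrightarrow X_g$ are the ``generating'' plots: every plot of $X_g$ locally factors through one of these two axes (except near the origin, where a plot landing at the origin has a constant value there). Because the tangent bundle is a colimit over the plot category, and this colimit is generated by $T\R \to \R$ along each axis, I expect $T^{\dvs}X_g$ to be recoverable from the two pieces $T(\R\times\{0\}) \to \R\times\{0\}$ and $T(\{0\}\times\R)\to\{0\}\times\R$, suitably glued over the common point.

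Next I would identify $E_1$ and $E_2$ concretely as pushforwards (``extensions by zero'') of these axis tangent bundles. The bundle $\pi_1 : E_1 \to X_g$ restricts to the tangent bundle $T(\R\times\{0\}) \to \R\times\{0\}$ over the $x$-axis and to the zero bundle over the $y$-axis, glued at the origin; similarly for $\pi_2$ and the $y$-axis. I would then construct natural bundle maps $Tp \to E_1 \oplus E_2$ over $X_g$ for every plot $p : U \to X_g$: locally $p$ factors through one of the two axes, so $Tp$ factors through $E_1$ or through $E_2$, and hence through the sum; one checks these are compatible with morphisms in the plot category (the delicate point being plots near the origin, where one uses that $X_g$ has the gluing diffeology so that a plot hitting the origin is, locally, either entirely in one axis or constant at the origin). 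This produces a cocone and hence, by the universal property of the colimit defining $T^kX$ with $k=1$ (equivalently $T^{\dvs}X_g$, by \cite[Theorem~4.17]{CW2}), a canonical bundle map $\Phi : T^{\dvs}X_g \to E_1\oplus E_2$ over $X_g$.

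To go the other way, I would use that $E_1 \oplus E_2$ is the coproduct of $\pi_1$ and $\pi_2$ in $\DVPB_{X_g}$, so it suffices to give bundle maps $E_1 \to T^{\dvs}X_g$ and $E_2 \to T^{\dvs}X_g$ over $X_g$. Each of these comes from the inclusion of an axis: $\iota_x$ induces $T(\R\times\{0\}) \to T^{\dvs}X_g$, which extends (by zero off the $x$-axis) to $E_1 \to T^{\dvs}X_g$, and similarly for $E_2$. This yields $\Psi : E_1 \oplus E_2 \to T^{\dvs}X_g$. I would then check $\Phi$ and $\Psi$ are mutually inverse: fibrewise this is a short computation (over a nonzero point of the $x$-axis both sides are $\R$, over the origin both sides are $2$-dimensional with basis $\partial/\partial x,\partial/\partial y$ by \cite[Example~3.17]{CW2}), and smoothness of the inverse follows because both diffeologies are the dvsifications of diffeologies generated by the same collection of axis-plots.

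The main obstacle I expect is the behaviour at the origin: verifying that the maps $Tp \to E_1\oplus E_2$ are well-defined and compatible for plots $p$ whose image meets the origin, and that no extra identifications or extra plots are forced in the colimit beyond what $E_1\oplus E_2$ sees. Concretely one must rule out ``mixed'' tangent directions at the origin not coming from a single axis — this is exactly where the gluing diffeology (as opposed to the subset diffeology $X_s$) is essential, since for $X_g$ a plot landing near the origin is locally supported in a single axis, so its differential contributes only to $E_1$ or only to $E_2$. Handling this carefully, and confirming that the origin fibre of $E_1\oplus E_2$ is genuinely $2$-dimensional (i.e., the sum of the two $1$-dimensional fibres, matching $T_0(X_g)$), is the crux; the rest is bookkeeping with the explicit colimit construction from Theorem~\ref{thm:DVPB-bicomplete} and the concrete description of plots after Proposition~\ref{prop:dvsification}.
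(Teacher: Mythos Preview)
Your proposal is correct and your construction of $\Psi : E_1 \oplus E_2 \to T^{\dvs}X_g$ via the coproduct universal property is exactly what the paper does. The difference is in how the inverse is obtained. You build $\Phi$ explicitly from the colimit description of $T^{\dvs}X_g$, which obliges you to verify the cocone condition over the whole plot category (the ``delicate point'' you flag at the origin). The paper instead avoids constructing $\Phi$ at all: having produced the single map $\psi = \Psi$, it observes directly that $\psi$ is a bijection (the same fibrewise check you do) and then argues that $\psi$ is a subduction, because every plot of $T^{\dvs}X_g$ in Hector's diffeology locally factors through $Tp$ for $p$ one of the two axis inclusions, and these visibly lift to $E_1$ or $E_2$. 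This is shorter and sidesteps the cocone-compatibility bookkeeping; on the other hand, your approach makes the inverse map explicit and shows more clearly why the gluing diffeology is exactly what is needed to make the cocone well-defined.
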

\begin{proof}
 Since the inclusion $i:\R \to X_g$ of the $x$-axis is smooth, we
 have a smooth map $T\R \to T^{\dvs} X_g$.
 Since the inclusion $j:\R \to X_g$ of the $y$-axis is smooth, the composite
 \[
  \xymatrix{\R \ar[r]^-j & X_g \ar[r]^-\sigma & T^{\dvs} X_g}
 \]
 is smooth, where $\sigma$ is the zero section. These together induce a bundle map $\pi_1:E_1 \to T^{\dvs} X_g$ over $X_g$.
 Similarly we have another bundle map $\pi_2:E_2 \to T^{\dvs} X_g$ over $X_g$. Therefore, we have a bundle map
 $\psi : E_1 \oplus E_2 \to T^{\dvs} X_g$ over $X_g$.
 It is easy to see that $\psi$ is a smooth bijection.
 To see that $\psi$ is an isomorphism of diffeological vector pseudo-bundles over $X_g$,
 we must show that it is a subduction.
 Since $E_1 \oplus E_2$ is a diffeological vector pseudo-bundle, it is enough
 to check that every plot in Hector's diffeology lifts through $\psi$.
 Every such plot $q : V \to T^{\dvs}X_g$ locally factors through a plot of the
 form $Tp : T\R \to T^{\dvs}X_g$, where $p$ is the inclusion of one of the axes in $X_g$,
 since these inclusions generate the diffeology on $X_g$.
 But this is the same as saying that $q$ locally factors through one of the
 maps $E_i \to T^{\dvs}X_g$, which implies that $\psi$ is a subduction.
\end{proof}

\begin{rem}
 More generally, let $\{(X_i,x_i)\}_{i \in I}$ be a family of pointed diffeological spaces.
 Write $X$ for the wedge of the $X_i$'s, that is, the gluing of the $X_i$'s
 along their distinguished points.
 Consider $T^{\dvs} X_i$ to be pointed by the zero vector in $T_{x_i}X_i$,
 and write $T_i$ for the wedge of $T^{\dvs} X_i$ with all $X_j$'s for $j \neq i$.
 Then $T_i$ is a diffeological vector pseudo-bundle over $X$, with $0$-dimensional
 fibres away from $X_i$.
 The natural map $\bigoplus_{i \in I} T_i \to T^{\dvs} X$ is an
 isomorphism of diffeological vector pseudo-bundles over $X$.
 The proof that this map is a subduction is similar to the proof of
 Proposition~\ref{prop:tangent-as-sum}.
 The fact that the map is injective follows from the fact that the
 inclusions $X_i \to X$ have retractions.
\end{rem}

\begin{prop}\label{section-t}
 For the tangent bundle $\pi : T^{\dvs}X_g \to X_g$, we have
 \begin{align*}
   \Gamma(\pi)
    &= \{ f(x) \ddx + g(y) \ddy \mid f, g \in C^{\infty}(\R, \R) \text{ with } f(0) = 0 = g(0) \}\\
    &\cong V \oplus V ,
 \end{align*}
 where $f(x) \ddx + g(y) \ddy$ denotes the section sending $(x, 0)$ to $f(x) \ddx$
 and $(0,y)$ to $g(y) \ddy$,
 $V := \mbox{$\{ f \in C^{\infty}(\R, \R) \mid f(0) = 0 \}$}$ is equipped with the subset diffeology,
 and the isomorphism is as diffeological vector spaces.
\end{prop}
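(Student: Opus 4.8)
The plan is to reduce to Proposition~\ref{prop:tangent-as-sum}, compute the sections of each summand separately, and reassemble, tracking diffeologies throughout. Applying the functor $\Gamma \colon \DVPB_{X_g} \to \{\text{diffeological vector spaces}\}$ to the isomorphism $T^{\dvs} X_g \cong E_1 \oplus E_2$ of Proposition~\ref{prop:tangent-as-sum}, and using that a finite direct sum in $\DVPB_{X_g}$ is a fibrewise product and that $\Gamma$ preserves fibrewise products (Section~\ref{ss:fibrewise}), gives an isomorphism $\Gamma(\pi) \cong \Gamma(\pi_1) \oplus \Gamma(\pi_2)$ of diffeological vector spaces. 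By the symmetry between $E_1$ and $E_2$, it then suffices to identify $\Gamma(\pi_1)$ with $V$ as a diffeological vector space.

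To do so, I would use that, since $X_g$ carries the gluing diffeology of its two axes, a section $s$ of $\pi_1$ is determined by, and can be built from, its restrictions to the $x$-axis and to the $y$-axis, which must agree at the origin. Over the $x$-axis the bundle $\pi_1$ is the trivial line bundle $\R \times \R \to \R$, so $s$ restricts to a map $x \mapsto (x,0,h(x))$ for some $h \in C^\infty(\R,\R)$; over the $y$-axis the fibres of $\pi_1$ are $0$-dimensional away from the origin, so the restriction is forced to be $y \mapsto (0,y,0)$; and compatibility at the origin forces $h(0)=0$. This produces a bijection $\Gamma(\pi_1) \to V$ sending $s$ to $h$.

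That this bijection is a diffeomorphism I would check by unwinding the functional diffeologies via cartesian closedness: a plot $W \to \Gamma(\pi_1)$ is the same thing as a smooth map $W \times X_g \to E_1$ over $X_g$, and by the gluing property of $X_g$ this restricts to a smooth map $W \times \R \to \R \times \R$ over $\R$, necessarily of the form $(w,x) \mapsto (x,0,H(w,x))$ with $H$ smooth and $H(w,0)=0$, i.e.\ exactly a plot $W \to V$. Conversely, such an $H$ together with the unique zero section over the $y$-axis glue to a plot of $\Gamma(\pi_1)$. Transporting back through $T^{\dvs} X_g \cong E_1 \oplus E_2$, a pair $(f,g) \in V \oplus V$ corresponds to the section that is $f(x)\ddx$ on the $x$-axis, $g(y)\ddy$ on the $y$-axis, and $f(0)\ddx + g(0)\ddy = 0$ at the origin, which is precisely the notation $f(x)\ddx + g(y)\ddy$ of the statement.

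The main obstacle is essentially bookkeeping: one must check carefully that the restriction of a section over the punctured $y$-axis really does extend by the zero vector over the origin (this uses that the $y$-axis sits inside $E_1$ as one of the two glued pieces), and one must move plots back and forth through the functional diffeology on spaces of sections. None of this goes beyond the machinery assembled in Section~\ref{se:operations}.
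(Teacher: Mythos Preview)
Your proposal is correct and follows the same approach as the paper: reduce via Proposition~\ref{prop:tangent-as-sum} to $\Gamma(\pi)\cong\Gamma(\pi_1)\oplus\Gamma(\pi_2)$, then identify each $\Gamma(\pi_i)$ with $V$ using the gluing diffeology. The paper's proof is terser---it simply asserts that ``due to the nature of the gluing diffeology, all sections of $\pi_1$ and $\pi_2$ must vanish at the origin'' and that one can ``directly compute'' $\Gamma(\pi_i)\cong V$---while you spell out the restriction-and-compatibility argument and the verification of the diffeology via cartesian closedness, but the underlying idea is identical.
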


Note that while $\ddx$ denotes a well-defined tangent vector in $T_{(x,0)}X_g$
for each $x \in \R$, $\ddx$ does not extend to a smooth section of $\pi$.

\begin{proof}
 Since $\pi \cong \pi_1 \oplus \pi_2$, we have an isomorphism
 $\Gamma(\pi) \cong \Gamma(\pi_1) \oplus \Gamma(\pi_2)$ as diffeological vector spaces.
 Due to the nature of the gluing diffeology, all sections of $\pi_1$ and $\pi_2$
 must vanish at the origin and we can directly compute that
 $\Gamma(\pi_1) \cong V \cong \Gamma(\pi_2)$, giving the claimed isomorphism.
 Using the isomorphism, we then see that
 \[
   \Gamma(\pi)
    = \{ f(x) \ddx + g(y) \ddy \mid f, g \in C^{\infty}(\R, \R) \text{ with } f(0) = 0 = g(0) \} ,
 \]
 as required.
\end{proof}

\begin{rem}\label{rem:cross}
 In particular, while the vector space $T_0(X_g)$ is 2-dimensional, all sections vanish there.
 In the notation of Section~\ref{sss:E'}, $(T^{\dvs} X_g)'$ is the bundle which
 is $0$-dimensional over the origin and the same as $T^{\dvs} X_g$ elsewhere.
 As stated in Lemma~\ref{lem:E'}, this inclusion induces an isomorphism on sections.
 A similar phenomenon happens with the bundles $\pi_1$ and $\pi_2$ defined above.
\end{rem}

We next use this calculation to compute the space of differential forms on $X_g$.

\begin{prop}\label{prop:form}
 We have
 \begin{align*}
   \Omega^1(X_g)
    &= \{ f(x) dx + g(y) dy \mid f, g \in C^{\infty}(\R, \R) \} \\
    &\cong C^{\infty}(\R, \R) \times C^{\infty}(\R, \R) ,
 \end{align*}
 where $dx$ is trivial away from the $x$-axis and $dy$ is trivial away from the $y$-axis,
 and the isomorphism is as diffeological vector spaces.
\end{prop}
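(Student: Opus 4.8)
The plan is to rewrite $\Omega^1(X_g)$ as a space of bundle maps out of $T^{\dvs}X_g$ and then exploit the decomposition $T^{\dvs}X_g\cong E_1\oplus E_2$ from Proposition~\ref{prop:tangent-as-sum}, in parallel with the computation of $\Gamma(\pi)$ in Proposition~\ref{section-t}. First, by Theorem~\ref{thm:recover-forms} in the case $k=1$ we have $\Omega^1(X_g)\cong\widetilde{\Omega}^1(X_g)=\Gamma\big((T^{\dvs}X_g)^*\to X_g\big)$, and by Lemma~\ref{lem:Hom-Gamma} this is $\DVPB_{X_g}(T^{\dvs}X_g,\,X_g\times\R)$, all as diffeological vector spaces. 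Substituting $T^{\dvs}X_g\cong E_1\oplus E_2$, applying the first isomorphism of Proposition~\ref{prop:hom-sum-prod}, and using that $\Gamma$ carries a fibrewise product to the product of section spaces, we obtain
\[
  \Omega^1(X_g)\;\cong\;\DVPB_{X_g}(E_1,\,X_g\times\R)\times\DVPB_{X_g}(E_2,\,X_g\times\R).
\]
So it suffices to compute each factor and identify it with $C^\infty(\R,\R)$.

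Second, I would compute $\DVPB_{X_g}(E_1,\,X_g\times\R)\cong C^\infty(\R,\R)$ by hand. Such a bundle map is the same as a fibrewise-linear smooth map $\varphi:E_1\to\R$. Since $E_1$ carries the gluing diffeology of the piece $P_1=(\R\times\{0\})\times\R$, which is just $\R^2$ in coordinates $(x,v)$, and the piece $P_2=(\{0\}\times\R)\times\{0\}$, smoothness of $\varphi$ is equivalent to smoothness of $\varphi|_{P_1}$ and $\varphi|_{P_2}$. On $P_1$, linearity in $v$ forces $\varphi(x,0,v)=f(x)\,v$ for a unique $f:\R\to\R$, and this is smooth exactly when $f\in C^\infty(\R,\R)$; on $P_2$ every fibre of $E_1$ is $\{0\}$, so $\varphi$ vanishes there, which is automatically compatible with $P_1$ at the glued point $(0,0,0)$ by linearity. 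Thus $\varphi\mapsto f$ is a linear bijection $\DVPB_{X_g}(E_1,\,X_g\times\R)\cong C^\infty(\R,\R)$, and one checks it is a diffeomorphism by unwinding the definitions of the diffeology on the $\DVPB$-hom set and of the functional diffeology: a plot $W\to\DVPB_{X_g}(E_1,\,X_g\times\R)$ amounts, after restriction to $P_1\cong\R^2$, to a smooth map $W\times\R^2\to\R$ of the form $(w,x,v)\mapsto f_w(x)v$, which is equivalent to $(w,x)\mapsto f_w(x)$ being smooth. The same argument gives $\DVPB_{X_g}(E_2,\,X_g\times\R)\cong C^\infty(\R,\R)$ via a function $g$. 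Finally I would unwind the chain of isomorphisms to see that the form attached to $(f,g)$ pulls back along the inclusion of the $x$-axis to $f(x)\,dx\in\Omega^1(\R)$, along the inclusion of the $y$-axis to $g(y)\,dy$, and restricts to $0$ on the opposite axis — i.e. it is exactly the form written $f(x)\,dx+g(y)\,dy$ in the statement.

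The main obstacle is the bookkeeping in the second step, and in particular the conceptual point that dualizing removes the vanishing-at-the-origin constraint: over the $y$-axis minus the origin the fibre of $E_1$ is zero-dimensional, forcing $\varphi$ to vanish there, but over the origin the fibre is one-dimensional and $\varphi$ is unconstrained, so $f$ ranges over \emph{all} of $C^\infty(\R,\R)$ rather than the subspace vanishing at $0$ that appeared for sections of $\pi_1$ in Proposition~\ref{section-t}. Everything else is a formal assembly of Theorems~\ref{thm:recover-forms} and \ref{thm:Omega}, Lemma~\ref{lem:Hom-Gamma}, Proposition~\ref{prop:hom-sum-prod}, and Proposition~\ref{prop:tangent-as-sum}, together with the routine verification that the final set bijection respects the diffeologies.
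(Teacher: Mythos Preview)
Your proposal is correct and follows the same overall architecture as the paper: identify $\Omega^1(X_g)$ with sections of $(T^{\dvs}X_g)^*$, split via $T^{\dvs}X_g\cong E_1\oplus E_2$ using Propositions~\ref{prop:tangent-as-sum} and~\ref{prop:hom-sum-prod}, and then compute each factor as $C^\infty(\R,\R)$.

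The one place your execution differs is the key computation of $\Gamma(E_1^*)\cong\DVPB_{X_g}(E_1,X_g\times\R)$. The paper works on the section side: it writes down an explicit candidate section $p_\ell:\R\to E_1^*$ along the $y$-axis taking an arbitrary value $\ell$ at the origin, and verifies smoothness by unwinding the Hom-bundle diffeology of Section~\ref{sss:Hom}, which in the end comes down to the same gluing case split on $E_1$ that you use. You instead work on the bundle-map side and invoke the universal property of the gluing diffeology on $E_1$ directly to characterize all fibrewise-linear smooth maps $E_1\to\R$ at once. Your route is a bit more efficient (and implicitly uses that $W\times-$ preserves the pushout defining $E_1$, which is fine since $\Diff$ is cartesian closed); the paper's route has the virtue of exhibiting concretely the ``extra'' sections with $f(0)\neq 0$, which is the conceptual point you correctly flag as the main obstacle. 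Either way the computation and the conclusion are the same.
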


More formally, $dx$ and $dy$ are defined to be the pullbacks of
$dx$ and $dy$ in $\Omega^1(\R^2)$ along the inclusion $X_g \to \R^2$.

\begin{proof}
 By Theorem~\ref{thm:Omega}, $\Omega^1(X_g) \cong \Gamma((T^{\dvs} X_g)^* \to X_g)$, so we compute the latter.
 By Propositions~\ref{prop:hom-sum-prod} and~\ref{prop:tangent-as-sum}, we have
 $(T^{\dvs} X_g)^* \cong E_1^* \times_{X_g} E_2^*$ as diffeological vector pseudo-bundles over $X_g$.
 Therefore,
 \[
    \Gamma((T^{\dvs} X_g)^* \to X_g) \cong \Gamma(E_1^* \to X_g) \times \Gamma(E_2^* \to X_g) .
 \]
 To calculate $\Gamma(E_1^* \to X_g)$, fix $\ell \in L^\infty(\R,\R)$ and consider the map $p_{\ell}:\R \to E_1^*$ defined by
 \begin{equation*}%
  t \mapsto \begin{cases} (0,t,0^*), & \textrm{if $t \neq 0$} \\ (0,0,\ell), & \textrm{if $t=0$,} \end{cases}
 \end{equation*}
 where $0^* \in L^\infty(\R,\R)$ denotes the zero map, and we express elements in $E_1^*$ as elements of the fibrewise dual.
 We claim that $p_{\ell}$ is smooth.
 By the definition of the diffeology on $E_1^* = \Hom_{X_g}(E_1, X_g \times \R)$
 (see Section~\ref{sss:Hom}),
 we must check two things.
 First observe that the composite $\R \to E_1^* \to X_g$ sends $t$ to $(0,t)$,
 so it is smooth.
 Next we must show that a certain map $f : \R \times_{X_g} E_1 \to X_g \times \R$ is smooth.
 Here, the pullback is taken along the map $\R \to X_g$ sending $t$ to $(0,t)$,
 and $f$ is defined by
 \[
  (t,(0,t,v)) \mapsto ((0,t),p_{\ell}(t)(0,t,v)) = \begin{cases} ((0,t),0), & \textrm{if $t \neq 0$} \\ ((0,0),\ell(v)), & \textrm{if $t = 0$} . \end{cases}
 \]
 To see that $f$ is smooth, let $(\alpha,\beta): U \to \R \times_{X_g} E_1$ be a plot
 and consider the composite $U \to \R \times_{X_g} E_1 \to X_g \times \R$.
 The component of the composite landing in $X_g$ sends $u$ to $(0, \alpha(u))$,
 and so is smooth.
 We are left to show that the other component is smooth.
 Since $E_1$ has the gluing diffeology,
 $\beta$ locally factors through either $\R \times \{0\} \times \R$ or $\{0\} \times \R \times \{0\}$. In the first case,
 $\alpha$ is constant at zero, and hence the composite is given by
 $u \mapsto \ell(\Pr_3(\beta(u)))$, which is smooth.
 In the second case, the third component of $\beta$ is constant at zero, and hence the composite is given by
 the constant map $0$, which is again smooth.

 It follows from the above that any smooth section of $E_1^*$ over the $x$-axis
 can be extended by zero to give a smooth global section of $E_1^*$.
 Therefore, $\Gamma(E_1^* \to X_g) \cong C^{\infty}(\R, \R)$ as diffeological vector
 spaces.
 All of the above goes through for $E_2$ as well, so we conclude that
 \[
 \begin{aligned}
   \Omega^1(X_g)
   &\cong \Gamma((T^{\dvs} X_g)^* \to X_g) \\
   &\cong \Gamma(E_1^* \to X_g) \times \Gamma(E_2^* \to X_g) \\
   &\cong C^{\infty}(\R, \R) \times C^{\infty}(\R, \R) .
 \end{aligned}
 \]
 This implies that
 \[ \Omega^1(X_g) = \{ f(x) dx + g(y) dy \mid f, g \in C^{\infty}(\R, \R) \}, \]
 as required.
\end{proof}

Note that the proof shows that $(T^{\dvs}X_g)^* \to X_g$ has sections which are non-zero
at the origin, in contrast to how sections of $T^{\dvs}X_g \to X_g$ behave,
as explained in Proposition~\ref{section-t}.
Note that for each $x \in X_g$, the dimensions of the fibres of these two bundles at $x$
agree, showing that the difference arises from the diffeologies.

\begin{rem}
 We give another way to calculate $\Omega^1(X_g)$. 
 By Proposition~\ref{prop:top-forms}, a $1$-form on $X_g$ is equivalent to a compatible 
 family of $1$-forms on the domains of all of the $1$-plots.
 Every plot locally factors through either the inclusion of the $x$-axis or the
 inclusion of the $y$-axis, and this factorization is unique, except where the
 plot is locally constant at the origin.
 It follows that in order to assign a compatible family of $1$-forms to the
 domains of the $1$-plots, it is equivalent to choose $1$-forms on the $x$-axis
 and the $y$-axis.
 (At a place where a plot is locally constant, either choice of factorization will
 assign the zero $1$-form.)
 Therefore, we have $\Omega^1(X_g) \cong \Omega^1(\R) \times \Omega^1(\R)$, 
 which matches the calculation in Proposition~\ref{prop:form}.
\end{rem}

We next give another example showing that $\rho$ can fail to be surjective
when $X$ is not weakly filtered.
(Cf.\ Theorem~\ref{thm:recover-forms} and Example~\ref{ex:Xs}.)

\begin{prop}\label{prop:not-surjective}
 The canonical map $\rho:T^2 X_g \to \Exterior^2 \, T^{\dvs} X_g$,
 introduced in Section~\ref{se:duality}, is not surjective.
\end{prop}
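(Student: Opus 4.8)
The plan is to show that $\rho$ already fails to be surjective on the fibre over the origin $o$ of $X_g$. Since $\rho$ is a bundle map over $X_g$, it is surjective as a map of total spaces exactly when each $\rho_x : T^2_x(X_g) \to \Exterior^2 T^{\dvs}_x(X_g)$ is surjective, so it suffices to find one point where this fails.

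First I would record that $\Exterior^2 T^{\dvs}_o(X_g) \neq 0$. Recall from~\cite[Example~3.17]{CW2} that $T_o(X_g)$ is $2$-dimensional, with basis $\ddx$ and $\ddy$ given by the images of $\ddt \in T_0(\R)$ under the two axis inclusions $\R \to X_g$; hence $\ddx \wedge \ddy$ is a nonzero element of $\Exterior^2 T^{\dvs}_o(X_g)$. (Equivalently, since the inclusion $i : X_g \to \R^2$ is smooth, $i^*(dx \wedge dy)$ is a section of $(\Exterior^2 T^{\dvs} X_g)^*$ whose value at $o$ sends $\ddx \wedge \ddy$ to $(dx \wedge dy)(\partial_x, \partial_y) = 1$.)

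Next I would show that $\rho_o$ is the zero map. By the construction of colimits in $\DVPB$ (Theorem~\ref{thm:DVPB-bicomplete}), $T^2_o(X_g)$ is the colimit of $\Exterior^2 T_0(U)$ over the pointed plots $(U,0) \to (X_g,o)$, so every element of $T^2_o(X_g)$ is the class of some $\xi \in \Exterior^2 T_0(U)$, and $\rho_o$ sends that class to $\Exterior^2 (Tp)_0(\xi)$. Every plot of $X_g$ locally factors through one of the two axis inclusions (as used repeatedly in this section), so after restricting $p$ to a connected neighbourhood $W$ of $0$ — which represents the same class, since $W \hookrightarrow U$ is a morphism in the plot category inducing the identity on $T_0$ — we may write $p|_W = \iota \circ \bar p$ with $\iota$ an axis inclusion and $\bar p : W \to \R$ smooth. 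Then $(Tp|_W)_0 = (T\iota)_0 \circ (T\bar p)_0$ factors through the $1$-dimensional space $T_0(\R)$, so $\Exterior^2 (Tp|_W)_0$ factors through $\Exterior^2 T_0(\R) = 0$; hence $\rho_o$ kills the class of $\xi$.

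Combining the two steps, $\rho_o$ is a zero map into a nonzero vector space, so it is not surjective, and therefore neither is $\rho$. I expect the only mildly delicate point to be the colimit bookkeeping in the middle step, which is handled by the parenthetical remark above. (Alternatively, one could deduce non-surjectivity of $\rho$ from the fact, parallel to Example~\ref{ex:Xs}, that $i^*(dx \wedge dy)$ is nonzero in $\widetilde{\Omega}^2(X_g)$ but maps to $0$ in $\Omega^2(X_g)$, so that $\phi$, and hence $\rho^*$, is not injective; but the fibrewise argument is more direct.)
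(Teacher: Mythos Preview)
Your argument is correct and follows the same fibrewise strategy as the paper: compare the fibres over the origin and use that every plot locally factors through a $1$-dimensional axis inclusion. One small wrinkle: since the germ category at $o$ is not filtered, an arbitrary element of the colimit $T^2_o(X_g)$ is a finite \emph{sum} of classes $[\xi]$ rather than a single class, but as $\rho_o$ is linear your conclusion is unaffected. In fact your factorisation already shows that each such class is zero in the colimit itself, i.e.\ $T^2_o(X_g)=0$; the paper records this as a general result (Proposition~\ref{prop:dim}) that $T^kX\to X$ is a diffeomorphism whenever $k$ exceeds the $D$-dimension of $X$, and then applies it to $X_g$, which has $D$-dimension~$1$.
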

\begin{proof}
 The fibre of $\Exterior^2 \, T^{\dvs} X_g \to X_g$ at the origin is
 $\Exterior^2 \, T_0^{\dvs}(X_g)$, which is $1$-dimensional since $T_0^{\dvs}(X_g) = \R^2$.
 On the other hand, all of the fibres of $T^2 X_g \to X_g$ are $0$-dimensional,
 by the following result.
 Therefore, the canonical map $\rho:T^2 X_g \to \Exterior^2 \, T^{\dvs} X_g$ cannot be surjective.
\end{proof}

 Note that $X_g$ has $D$-dimension $1$.

 \begin{prop}\label{prop:dim}
  Let $X$ be a diffeological space of $D$-dimension $n$. Then for $k > n$, $T^k X \to X$ is a diffeomorphism
  and $\Omega^k(X) = 0$.
 \end{prop}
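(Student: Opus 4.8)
The plan is to show that every fibre of $T^k X \to X$ is the zero vector space when $k > n$; both assertions of the proposition then follow with little extra work.

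First I would reduce to this fibrewise claim. By the construction of colimits in Theorem~\ref{thm:DVPB-bicomplete} (see also the remark after Theorem~\ref{thm:Omega}), for each $x \in X$ the fibre $T^k_x(X)$ is the colimit, over the pointed plot category of $(X,x)$, of the vector spaces $\Exterior^k T_0(U)$. In particular, writing $\iota_p : \Exterior^k TU \to T^k X$ for the canonical cocone map attached to a plot $p : U \to X$, every element of $T^k_x(X)$ is of the form $\iota_p(\omega)$ for some such $p$ with $p(u) = x$ and some $\omega \in \Exterior^k T_u(U)$. So it is enough to show each such $\iota_p(\omega)$ is zero.

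Here the $D$-dimension hypothesis enters. Since $X$ has $D$-dimension $\leq n$, its diffeology is generated by plots whose domains have dimension $\leq n$, so after shrinking $U$ to a small neighbourhood $U'$ of $u$ we may factor $p|_{U'}$ as $U' \xrightarrow{h} W \xrightarrow{g} X$ with $g$ a plot and $\dim W \leq n < k$ (if $p|_{U'}$ is locally constant, factor it through a $0$-dimensional plot, which is the case $\dim W = 0$). Since $\Exterior^k$ of a vector space of dimension $< k$ vanishes, $\Exterior^k T_{h(u)}(W) = 0$. The open inclusion $U' \hookrightarrow U$ and the map $h$ are both morphisms in the plot category of $X$, and $\Exterior^k T$ restricts to an isomorphism on the fibre over $u$ along an open inclusion; so by compatibility of the colimiting cocone, $\omega$ comes from a class $\omega' \in \Exterior^k T_u(U')$ and $\iota_p(\omega) = \iota_{p|_{U'}}(\omega') = \iota_g\big((\Exterior^k Th)(\omega')\big) = \iota_g(0) = 0$. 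Hence $T^k_x(X) = 0$ for all $x$.

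It remains to conclude. A diffeological vector pseudo-bundle all of whose fibres are the zero vector space has its projection and zero section mutually inverse smooth maps, so $T^k X \to X$ is a diffeomorphism, proving the first claim. For the second, Theorem~\ref{thm:Omega} gives $\Omega^k(X) \cong \Gamma((T^k X)^* \to X)$, and $(T^k X)^* = \Hom_X(T^k X, X \times \R)$ again has all fibres zero (the only linear map out of $0$ is $0$), so its only section is the zero section and $\Omega^k(X) = 0$; alternatively, by \cite[6.37]{I2} a $k$-form on $X$ is determined by its restrictions to the $k$-plots of $X$, and each such restriction vanishes because it is locally pulled back from a $k$-form on an open subset of $\R^m$ with $m < k$. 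I expect the only point requiring care — rather than a real obstacle — to be the bookkeeping in the third paragraph: verifying that the local factorization through a low-dimensional plot is a morphism in the plot category, that the induced map on $k$th exterior powers is compatible with the colimiting cocone, and that $\Exterior^k T$ restricts to an isomorphism on the fibre over $u$ along the open inclusion $U' \hookrightarrow U$.
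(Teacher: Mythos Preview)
Your proof is correct and follows essentially the same approach as the paper's: identify the fibre $T^k_x(X)$ as the colimit of the $\Exterior^k T_0(U)$ over pointed plots, use the $D$-dimension hypothesis to factor each pointed plot through one with domain of dimension $\leq n$, observe that $\Exterior^k$ vanishes there, conclude the fibres are zero, and then deduce both assertions (the second via Theorem~\ref{thm:Omega}). One small slip: you assert that \emph{every} element of $T^k_x(X)$ is of the form $\iota_p(\omega)$ for a single $p$, but a priori elements of a colimit in $\Vect$ are finite sums of such; this is harmless, since showing each $\iota_p(\omega)=0$ still forces the colimit to vanish, but the phrasing should be adjusted.
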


 \begin{proof}
  By the description of colimits in $\DVPB$ in the proof of Theorem~\ref{thm:DVPB-bicomplete}, $T^k_x (X) \cong \colim \Exterior^k  T_0(U)$,
  where the colimit is over pointed plots $(U, 0) \to (X, x)$.
  By assumption, each such plot factors in the germ category through
  a plot $(V, 0) \to (X, x)$ with $\dim(V) \leq n < k$.
  Since $\Exterior^k T_0(V) = 0$, the colimit vanishes.
  Therefore, $T^k X \to X$ is a smooth bijection.  The zero section provides
  a smooth inverse, showing that it is a diffeomorphism.

  The claim about $\Omega^k(X)$ follows from this and~Theorem~\ref{thm:Omega},
  and is also proved in~\cite[6.39]{I2}.
 \end{proof}

We end this section with another example showing that $\phi$ can fail to be injective
when $X$ is not weakly filtered.
(Cf.\ Theorem~\ref{thm:recover-forms} and Example~\ref{ex:Xs}.)

\begin{prop}\label{prop:Phi-not-isom}
 The diffeological vector space $\widetilde{\Omega}^2(X_g)$ is $1$-dimensional
 while $\Omega^2(X_g) = 0$.
 In particular, there is no injective linear map $\widetilde{\Omega}^2(X_g) \to \Omega^2(X_g)$.
\end{prop}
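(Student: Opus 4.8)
The statement has two parts: computing $\widetilde{\Omega}^2(X_g)$ and showing $\Omega^2(X_g) = 0$. The second part is immediate: since $X_g$ has $D$-dimension $1 < 2$, Proposition~\ref{prop:dim} gives $\Omega^2(X_g) = 0$ directly. So the work is entirely in showing $\widetilde{\Omega}^2(X_g) = \Gamma((\Exterior^2 T^{\dvs} X_g)^* \to X_g)$ is $1$-dimensional.

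The plan is to exploit the decomposition $T^{\dvs} X_g \cong E_1 \oplus E_2$ from Proposition~\ref{prop:tangent-as-sum}. First I would compute $\Exterior^2(E_1 \oplus E_2)$. Using the standard algebraic decomposition $\Exterior^2(A \oplus B) \cong \Exterior^2 A \oplus (A \otimes B) \oplus \Exterior^2 B$ fibrewise, and the fact that $E_1$ and $E_2$ each have fibres of dimension $\leq 1$ (so $\Exterior^2 E_i$ has $0$-dimensional fibres, hence is the zero bundle by Proposition~\ref{prop:dim}-type reasoning applied fibrewise, or directly since $\Exterior^2$ of a line is $0$), we get $\Exterior^2 T^{\dvs} X_g \cong E_1 \otimes_{X_g} E_2$ as diffeological vector pseudo-bundles over $X_g$. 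I would need to justify that this algebraic decomposition holds at the level of diffeological vector pseudo-bundles, which should follow from the fact that the fibrewise operations $\Exterior^2$, $\otimes$, $\oplus$ all commute with the relevant constructions, together with the explicit descriptions of plots given in Section~\ref{ss:fibrewise}; alternatively one checks directly that the antisymmetrization map realizes $E_1 \otimes_{X_g} E_2$ as (a summand of) $\Exterior^2(E_1 \oplus E_2)$.

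Next I would analyze the bundle $E_1 \otimes_{X_g} E_2 \to X_g$ concretely. Away from the origin, at a point on the $x$-axis, $E_2$ has a $0$-dimensional fibre, so the tensor product fibre is $0$; similarly on the $y$-axis; at the origin, both fibres are $\R$, so the tensor product fibre is $\R$. Thus $E_1 \otimes_{X_g} E_2$ is a bundle that is $0$-dimensional everywhere except $1$-dimensional over the origin — and crucially, its plots are governed by the gluing diffeology, so near the origin any plot into $E_1$ (resp. $E_2$) is locally either constant at $0$ or supported along the $x$-axis (resp. $y$-axis), forcing products of such plots to be locally constant at $0$ near the origin unless one factor is forced to be zero. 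I would then pass to the dual bundle $(\Exterior^2 T^{\dvs} X_g)^* \cong (E_1 \otimes_{X_g} E_2)^* \cong E_1^* \otimes_{X_g} E_2^*$ (using that $E_i$ have finite-dimensional, indeed fine, fibres so dualization behaves well, cf.\ the fine diffeology on tangent spaces of $X_g$). A section of this bundle assigns to the origin an element of the $1$-dimensional space $(\R \otimes \R)^* \cong \R$ and is forced to be zero elsewhere; using the computation in the proof of Proposition~\ref{prop:form} — where it was shown that sections of $E_1^*$ extend by zero from the $x$-axis, and the analogous fact for $E_2^*$ — one checks that \emph{every} choice of scalar at the origin extends (by zero) to a smooth global section. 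This gives $\Gamma((\Exterior^2 T^{\dvs} X_g)^*) \cong \R$.

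The main obstacle will be verifying the smoothness of the extension-by-zero section of $(\Exterior^2 T^{\dvs} X_g)^*$ — i.e., that the section which is a prescribed nonzero functional on $\Exterior^2 T_0^{\dvs}(X_g)$ and zero on all other (zero-dimensional) fibres is actually smooth as a map $X_g \to (\Exterior^2 T^{\dvs} X_g)^*$. This requires checking the two conditions in the definition of the $\Hom$-bundle diffeology (Section~\ref{sss:Hom}): that the composite to $X_g$ is smooth (trivial) and that the induced map on the pullback $X_g \times_{X_g} \Exterior^2 T^{\dvs} X_g \to X_g \times \R$ is smooth. For the latter, one takes a plot $q : V \to \Exterior^2 T^{\dvs} X_g$, uses Proposition~\ref{pr:free} and the explicit plot description of exterior bundles to write $q$ locally as a sum $\sum_i r_i(v)\, q_{i,1}(v) \wedge q_{i,2}(v)$ with $q_{i,j}$ plots of $T^{\dvs} X_g$ in Hector's diffeology, and then argues exactly as in Proposition~\ref{prop:form} that the gluing diffeology forces each $q_{i,j}$ to locally factor through a single axis, so that the pairing with our functional is smooth. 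This is a direct adaptation of the argument already carried out there, so it should go through without new ideas, just careful bookkeeping. Finally, I would note that the diffeology on $\widetilde{\Omega}^2(X_g) \cong \R$ is the standard one (it is a one-dimensional subspace of a limit of smooth function spaces), completing the identification as diffeological vector spaces, and the last sentence of the proposition is then immediate.
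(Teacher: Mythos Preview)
Your proposal is correct and follows essentially the same route as the paper: decompose $T^{\dvs}X_g \cong E_1 \oplus E_2$, identify $\Exterior^2 T^{\dvs}X_g$ with $E_1 \otimes_{X_g} E_2$, and then verify directly (via the $\Hom$-bundle diffeology, as in the proof of Proposition~\ref{prop:form}) that any functional at the origin extends by zero to a smooth global section, while $\Omega^2(X_g)=0$ comes from Proposition~\ref{prop:dim}. One small remark: the intermediate isomorphism $(E_1\otimes_{X_g}E_2)^* \cong E_1^*\otimes_{X_g}E_2^*$ you mention is not needed (and is not invoked in the paper)---your actual smoothness check already works directly with $(E_1\otimes_{X_g}E_2)^*$, so you can simply drop that step.
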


\begin{proof}
 We first calculate $\widetilde{\Omega}^2(X_g)$, which by definition is
 $\Gamma((\Exterior^2 T^{\dvs} X_g)^* \to X_g)$. Since $T^{\dvs} X_g \cong E_1 \oplus E_2$
 by Proposition~\ref{prop:tangent-as-sum},  we have
 \[
  T^{\dvs} X_g \otimes T^{\dvs} X_g \cong (E_1 \otimes E_1) \oplus (E_1 \otimes E_2)
  \oplus (E_2 \otimes E_1) \oplus (E_2 \otimes E_2),
 \]
 and hence $\Exterior^2 T^{\dvs} X_g \cong E_1 \otimes E_2$, all of these isomorphisms as diffeological vector pseudo-bundles over $X_g$.
 Note that for any $\ell \in (\R \otimes \R)^*$, the map $X_g \to (E_1 \otimes E_2)^*$ defined by
 \[
   (x,y) \mapsto \begin{cases}
                (x,y,0^*), & \textrm{if $(x,y) \neq (0,0)$} \\
                (0,0,\ell), & \textrm{else}
              \end{cases}
 \]
 is smooth, which can be proved by an argument similar to that in the proof of Proposition~\ref{prop:form}. This then implies that
 $\Gamma((\Exterior^2 T^{\dvs} X_g)^* \to X_g)$ is $1$-dimensional.
 On the other hand, by Proposition~\ref{prop:dim}, we know that $\Omega^2(X_g) = 0$.
 The claim follows.
\end{proof}

\section{\texorpdfstring{$\R^2$}{R2} with the spaghetti diffeology}
\label{se:spaghetti}

In this section, we study the space $\R^2_S$, which is the set $\R^2$ with the spaghetti diffeology
(also known as the wire diffeology~\cite[1.10]{I2}),
i.e., the diffeology generated by all smooth maps $\R \to \R^2$.
We begin with a result about the tangent spaces, filling in some details
from the claim made in~\cite[Example~3.22(1)]{CW2}.

\begin{prop}
For each $x \in \R^2_S$, the tangent space $T^{\dvs}_x(\R^2_S)$ has uncountable
dimension.
\end{prop}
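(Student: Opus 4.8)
The plan is to produce uncountably many linearly independent vectors in the underlying vector space of $T^{\dvs}_x(\R^2_S)$. Since dvsification changes only the diffeology and not the vector space structure on the fibres, that vector space is just the ordinary tangent space $T_x(\R^2_S) = \colim_{\Plot(\R^2_S,x)} T_0(\mathrm{dom})$. Moreover, for each $x$ the translation $y \mapsto y-x$ is a diffeomorphism of $\R^2_S$ (it carries smooth curves to smooth curves), so it suffices to treat $x = 0$.

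For each $c \in \R$ let $\gamma_c \colon \R \to \R^2_S$ be the plot $\gamma_c(t) = (t,ct)$ --- a smooth curve, hence a plot, since the spaghetti diffeology is generated by all smooth curves $\R \to \R^2$ --- and set $v_c := (\gamma_c)_*(\ddt) \in T_0(\R^2_S)$. I would then show the $v_c$ are linearly independent by constructing, for each $c$, a linear functional $\mathcal{L}_c \colon T_0(\R^2_S) \to \R$ with $\mathcal{L}_c(v_{c'}) = 1$ if $c' = c$ and $0$ otherwise; applying the $\mathcal{L}_c$ to a hypothetical finite relation $\sum_i \lambda_i v_{c_i} = 0$ then forces every $\lambda_i = 0$, and since $\R$ is uncountable we are done.

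To build $\mathcal{L}_c$ I use that a linear functional on the colimit $T_0(\R^2_S)$ is the same thing as a compatible cocone: a family of linear maps $\ell_p \colon T_0(\mathrm{dom}\,p) \to \R$, one for each pointed plot $p \colon (U,0) \to (\R^2_S,0)$, with $\ell_q \circ f_* = \ell_p$ for every morphism $f \colon p \to q$ in $\Plot(\R^2_S,0)$. Given such a $p$ with $U \subseteq \R^n$, let $\bar p \colon U \to \R^2$ denote $p$ regarded as a map into the standard $\R^2$ (it is smooth, because $\mathrm{id}\colon \R^2_S \to \R^2$ is smooth), and set $A_p := D\bar p_0 \colon \R^n \to \R^2$. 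The crucial structural fact --- and the only place the spaghetti diffeology is used --- is that every plot of $\R^2_S$ locally factors through a single smooth curve $\R \to \R^2$ (or is locally constant), so $\mathrm{rank}\, A_p \leq 1$; in particular $\mathrm{im}(A_p)$ lies in a line through the origin. I then define $\ell_p := \pi_1 \circ A_p$, with $\pi_1$ the projection to the first coordinate of $\R^2$, in case $\mathrm{im}(A_p) \subseteq \{(x,y) : y = cx\}$, and $\ell_p := 0$ otherwise. Each $\ell_p$ is linear and depends only on $p$.

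It then remains to verify two things. First, compatibility: for $f \colon p \to q$ one has $A_p = A_q \circ Df_0$, so $\mathrm{im}(A_p) \subseteq \mathrm{im}(A_q)$, and a brief case analysis --- using that $\mathrm{im}(A_q)$ is at most a line, so a subspace of $\mathrm{im}(A_q)$ contained in $\{y=cx\}$ is forced to be $\{0\}$ unless $\mathrm{im}(A_q)$ itself lies in $\{y=cx\}$ --- yields $\ell_q \circ f_* = \ell_p$ in every case. Second, the evaluation: $A_{\gamma_{c'}}$ sends $\ddt$ to $(1,c')$, so $\mathrm{im}(A_{\gamma_{c'}})$ is the line $\{y = c'x\}$, which lies in $\{y=cx\}$ exactly when $c' = c$; hence $\ell_{\gamma_{c'}}(\ddt) = \delta_{c,c'} = \mathcal{L}_c(v_{c'})$. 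I do not expect a genuine obstacle; the one point needing care is the rank bound $\mathrm{rank}\, A_p \leq 1$ --- without it $\mathcal{L}_c$ would not be well defined, which is precisely why the conclusion fails for the standard $\R^2$ --- together with bookkeeping for the degenerate case $A_q = 0$ in the compatibility check.
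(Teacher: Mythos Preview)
Your argument is correct. You and the paper set up the problem identically: reduce to $x=0$ by translation, consider the lines $\gamma_c(t)=(t,ct)$, and show that the resulting tangent vectors are linearly independent. The difference lies in how linear independence is established. The paper argues directly from the presentation of $T_0(\R^2_S)$ as a colimit: the relations are generated by factorizations of one plot through another (citing~\cite[Proposition~3.4]{CW2}), and since plots of $\R^2_S$ factor through curves and the lines $\gamma_c$ have pairwise distinct images even locally, no nontrivial relation can involve them. Your route instead produces, for each $c$, an explicit linear functional $\mathcal{L}_c$ on $T_0(\R^2_S)$ separating $v_c$ from the others, built as a compatible cocone $(\ell_p)_p$ using the rank bound $\mathrm{rank}\,D\bar p_0\le 1$. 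The paper's argument is shorter but leans on the cited structural description of the colimit and leaves the reader to unwind why ``distinct local images'' forbids relations; your approach is longer but entirely self-contained and makes the role of the spaghetti hypothesis transparent (it is exactly what forces $\mathrm{im}(A_p)$ into a single line, without which $\mathcal{L}_c$ would not be well defined). Both methods ultimately exploit the same geometric fact.
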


\begin{proof}
Since the translation map $\R^2_S \to \R^2_S$ sending $0$ to $x$ is a diffeomorphism,
it is enough to consider the case where $x = 0$.
For each $m \in \R$, consider the pointed curve 
$p_m:(\R,0) \to (\R^2_S,0)$ sending $t$ to $(t,mt)$.
This represents a nonzero element $(p_m)_*(\ddt)$ in $T^{\dvs}_0 (\R^2_S)$, since its
image under the map $T^{\dvs}_0 (\R^2_S) \to T_0(\R^2)$ induced by the
identity map $\R^2_S \to \R^2$ is nonzero.
We claim that these tangent vectors are linearly independent.
Indeed, the linear relations between tangent vectors are generated by
factorizations of one plot through another,
as described in the proof of~\cite[Proposition~3.4]{CW2}.
Since the plots of $\R^2_S$ all factor through curves
and the plots $p_m$ all have distinct images, even locally,
we deduce that the set $\{ (p_m)_*(\ddt) \}_{m \in \R}$ is linearly independent.
\end{proof}

Now we characterize the diffeology on each tangent space:

\begin{prop}\label{prop:fine}
For $x \in \R^2_S$, $T_x^{\dvs}(\R^2_S)$ is a fine diffeological vector space.
In other words, every linear map $T_x^{\dvs}(\R^2_S) \to \R$ is smooth.
\end{prop}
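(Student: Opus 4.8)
The plan is to reduce to the case $x = 0$ using the translation diffeomorphism of $\R^2_S$, and then to prove directly that $T_0^{\dvs}(\R^2_S)$ carries the fine diffeology. Once fineness is known, every linear functional $T_0^{\dvs}(\R^2_S) \to \R$ is automatically smooth: on a fine diffeological vector space every plot is locally a finite $\R$-linear combination $\sum_j r_j v_j$ of constant vectors with smooth coefficients, and applying a linear map $L$ turns this into $\sum_j r_j(Lv_j)$, which is again a plot. So the whole task is to show that \emph{every plot of $T_0^{\dvs}(\R^2_S)$ is locally of the form $\sum_j r_j v_j$} with $r_j$ smooth and $v_j \in T_0^{\dvs}(\R^2_S)$ fixed.

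Next I would unwind the diffeology. Since the spaghetti diffeology on $\R^2$ is generated by the smooth curves $\R \to \R^2$, Hector's diffeology $T^H\R^2_S$ is generated by the maps $Tc : T\R \to T^H\R^2_S$, and $T^{\dvs}\R^2_S$ is the dvsification of $T^H\R^2_S$. By the explicit description of dvsification given after Proposition~\ref{prop:dvsification}, a plot of $T^{\dvs}\R^2_S$ lying over the constant plot at $0$ is, locally, a finite fibrewise sum $\sum_i r_i q_i$ with $r_i$ smooth and each $q_i$ a plot of $T^H\R^2_S$ over the constant plot at $0$; and each such $q_i$ is locally either constant or of the form $Tc \circ f$ with $c : \R \to \R^2$ a smooth curve and $f = (a,b) : U \to T\R$ smooth satisfying $c \circ a \equiv 0$. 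Since constant plots are trivially of the desired form, it suffices to treat a single elementary plot $q(u) = b(u)\,w(a(u))$, where $w(s) \in T_0^{\dvs}(\R^2_S)$ is the image under $Tc$ of the tangent vector $\ddt$ based at $s$, and $a$ takes values in $c^{-1}(0)$.

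The structural heart is the observation that $w(s) = 0$ whenever $s$ lies in the interior of $c^{-1}(0)$, since there the germ of $c$ coincides with that of a constant curve. Fix $u_0$, put $s_0 := a(u_0)$ and $v := w(s_0)$. If $a$ is locally constant near $u_0$, then $q(u) = b(u)v$ near $u_0$ and we are done. Otherwise $a$ carries a connected neighbourhood of $u_0$ onto a nondegenerate interval $I \subseteq c^{-1}(0)$ containing $s_0$; if $s_0 \in \mathrm{int}(I)$ then $v = 0$ and, after shrinking, $q \equiv 0$, while if $s_0$ is an endpoint of $I$ then, after shrinking, $a(u) \in \mathrm{int}(I)$ except where $a(u) = s_0$, so $w(a(u)) \in \{0,v\}$. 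In all cases $q$ takes values in the line $\R v$.

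The main obstacle I foresee is the last step: writing $q = r\cdot v$ with $r : U \to \R$ \emph{smooth}. I would argue that, because $q$ is a plot of $T^H\R^2_S$ (hence continuous) which vanishes on the open set $\{u : a(u) \neq s_0\}$, the smooth function $b$ must vanish to infinite order along the boundary of $\{a = s_0\}$; a standard flat-function argument then shows that the function $r$ equal to $b$ on $\{a = s_0\}$ and to $0$ elsewhere is smooth, and $q = rv$. (When $v = 0$ this is vacuous.) Assembling the pieces, every plot of $T_0^{\dvs}(\R^2_S)$ is locally a finite $\R$-linear combination of constant tangent vectors with smooth coefficients, so $T_0^{\dvs}(\R^2_S)$ is fine. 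The two points requiring genuine care are this smoothness-of-coefficient argument and the endpoint case in the preceding paragraph; everything else is formal manipulation of the constructions in Sections~\ref{se:background} and~\ref{se:operations}.
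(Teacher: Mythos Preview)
Your overall strategy is sound and close to the paper's, but the step you yourself flag as ``the main obstacle'' contains a genuine gap.

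In the endpoint case you argue that continuity of the plot $q$ in $T^H\R^2_S$ forces the coefficient $b$ to vanish to infinite order along $\partial\{a=s_0\}$, so that the function $r$ (equal to $b$ on $\{a=s_0\}$ and $0$ elsewhere) is smooth. This is false. Take $U=\R$, $a(u)=u^2$, $s_0=0$, $b\equiv 1$, and any smooth curve $c$ with $c^{-1}(0)=[0,\infty)$. Then $\{a=s_0\}=\{0\}$, $b$ does not vanish at $0$, and your $r$ (equal to $1$ at $0$ and $0$ elsewhere) is not even continuous. Continuity of $q$ in the $D$-topology gives no constraint here: $q=Tc\circ(a,b)$ is automatically a plot for \emph{any} smooth $b$, so you cannot extract information about $b$ from the mere fact that $q$ is a plot.

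The remedy is to show that in fact $v=w(s_0)=0$ whenever the component $Z$ of $c^{-1}(0)$ containing the image of $a$ is a nondegenerate interval; then the case ``$a$ not locally constant'' collapses to $q\equiv 0$ on a neighbourhood and no coefficient argument is needed at all. For $s_0$ in the interior of $Z$ this is your ``structural heart'' observation. For $s_0$ an endpoint of $Z$, the curve $c$ is constant on one side of $s_0$ and hence flat there; by iterated L'H\^opital the reparametrisation $t\mapsto c(s_0+t^{1/3})$ is smooth, so near $s_0$ the plot $c$ factors through $t\mapsto t^3$, which annihilates $\ddt|_{s_0}$. This is precisely Lemma~\ref{lem:half-const}, and it is what makes the paper's Case~2 go through. (In the counterexample above, $c$ is flat at $0$, so indeed $v=0$ and the failure of $r$ to be smooth is harmless---but your argument as written never establishes $v=0$.) Apart from this, the paper's presentation differs from yours chiefly in two streamlining moves: it proves the equivalent statement ``every linear functional is smooth'' directly (testing smoothness of $\alpha$ rather than fineness of the diffeology), and it invokes Boman's theorem to reduce to one-dimensional plot domains before the case split.
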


\begin{proof}
The equivalence of the two claims is~\cite[Proposition~3.4]{CW4}.
Without loss of generality, we take $x = 0$.
Let $\alpha : T_0^{\dvs}(\R^2_S) \to \R$ be a linear map.
Since dvsification is left adjoint to the forgetful functor (see~\cite[Section~4.2]{CW2}),
it suffices to show that $\alpha$ is smooth on the tangent space
$T^H_0(\R^2_S)$ with Hector's diffeology.
By Boman's theorem (\cite[Corollary~3.14]{KM}), it is enough to test this on plots $\R \to T_0^H (\R^2_S)$.
Every such plot locally factors as
\[
  \R \llra{q} T\R \llra{Tp} T^H \R^2_S ,
\]
where $p : \R \to \R^2_S$ is a plot and the composite lands in $T_0 ^H(\R^2_S)$.
Let $f = \pi \circ q$, where $\pi : T\R \to \R$ is the projection.
We must have that $p \circ f = 0 \in \R^2_S$.
Let $Z$ be the path component of $p^{-1}(0)$ containing the image of $f$.
\smallskip

\noindent
\textbf{Case 1:}  $Z = \{ x \}$, a single point.
Then $f$ is constant.  It follows that $q$ lands in $T_x (\R)$, and
so the composite map factors as
\[
  \R \llra{q} T_x(\R) \llra{p_*} T_0 ^H(\R^2_S) \llra{\alpha} \R .
\]
The second and third maps are linear, so their composite is smooth.
So the whole composite is smooth.
\smallskip

\noindent
\textbf{Case 2:}  $Z$ is a non-trivial interval.
Since $p$ is constant on $Z$, it follows that $Tp(\ddt|_t) = 0 \in T_0^H(\R^2_S)$ for all $t \in Z$, since $p$ locally factors through $\R^0$.
Therefore, the composite $\alpha \circ Tp \circ q$ (which makes sense) is zero and hence smooth.
\end{proof}

\begin{lem}\label{lem:half-const}
For $p : \R \to \R^2_S$ smooth with
$p(t) = 0$ for $t \leq 0$, we have $p_*(\ddt|_0) = 0$ in $T_0 ^{\dvs}(\R^2_S)$.
\end{lem}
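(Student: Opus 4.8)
The plan is to exhibit $p$ as the composite of a plot of $\R^2_S$ with the reparametrisation $t \mapsto t^3$, whose derivative vanishes at $0$; functoriality of the tangent space at a point then immediately forces $p_*(\ddt|_0) = 0$.

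First, regarding $p$ as a smooth map $\R \to \R^2$ (by composing with the smooth identity map $\R^2_S \to \R^2$), the hypothesis that $p$ vanishes on $(-\infty, 0]$ makes $p$ flat at $0$: every derivative $p^{(k)}$ is smooth and vanishes on $(-\infty, 0]$, so for each $k$ and $N$ there are constants $\delta, C > 0$ with $|p^{(k)}(t)| \le C\,|t|^{N}$ for $|t| < \delta$.

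Next I would define $q : \R \to \R^2$ by $q(s) := p(\sqrt[3]{s})$, using the odd, bijective real cube root. This is identically $0$ on $(-\infty, 0)$ and manifestly smooth on $(0, \infty)$, and I claim it is also smooth at $0$. This is the one technical point: a chain rule computation shows that for $s > 0$ each $q^{(k)}(s)$ is a finite sum of terms of the form $(\text{const}) \cdot p^{(m)}(\sqrt[3]{s}) \cdot s^{m/3 - k}$ with $1 \le m \le k$, and the flatness bounds above force each such term to $0$ as $s \to 0^+$; with $q \equiv 0$ to the left of $0$, the standard criterion for a one-sided-flat piecewise-smooth function to be $C^\infty$ then gives $q \in C^\infty(\R, \R^2)$. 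Being a smooth curve, $q$ is a plot of $\R^2_S$, and $q(0) = p(0) = 0$, so $q : (\R, 0) \to (\R^2_S, 0)$ is a pointed plot.

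Finally, since $\sqrt[3]{t^3} = t$, we have $p = q \circ \mu$ with $\mu(t) = t^3$, and $\mu'(0) = 0$, so $\mu_*(\ddt|_0) = 0$ in $T_0(\R)$. Hence $p_*(\ddt|_0) = q_*\big(\mu_*(\ddt|_0)\big) = q_*(0) = 0$ in $T_0^{\dvs}(\R^2_S)$ (equivalently in $T^H_0(\R^2_S)$, which has the same underlying vector space). The only genuinely non-formal step is verifying that $q$ is smooth at the origin; everything else is formal.
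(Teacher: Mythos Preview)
Your proposal is correct and follows essentially the same route as the paper: observe that $p$ is flat at $0$, verify that $q(s) := p(s^{1/3})$ is smooth (the paper invokes iterated L'H\^{o}pital where you use chain-rule estimates, but these amount to the same thing), and then use the factorisation $p = q \circ (t \mapsto t^3)$ together with $(t^3)'(0) = 0$.
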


\begin{proof}
The assumption on $p$ implies that $p$ is flat at $0$, i.e., all derivatives of $p$ at $0$ are $0$.
Then by iterated use of L'H\^{o}pital's rule, one can see that $p(t^{1/3})$ is smooth.
So the plot $p$ factors through the smooth map $t^3 : \R \to \R$ which sends $\ddt|_0$ to $0$.
\end{proof}

\begin{ex}\label{ex:spaghetti-r}
Let $x = 0 \in \R^2_S$.
We will construct a smooth linear map $\alpha : T_0^{\dvs}(\R^2_S) \to \R$ that does not
extend to a smooth fibrewise linear map from $T^{\dvs} \R^2_S$ to $\R$.
In other words, we will show that the inclusion $\Uplambda^1 \R^2_S \to (T^{\dvs}\R^2_S)^*$
described in Remark~\ref{rem:Uplambda-Exterior} is proper.
Using the notation of Section~\ref{sss:E'},
this is saying that the inclusion $E' \to E$ is proper, where
$E \to \R^2_S$ is the dual of the tangent bundle.

For each $k = 1, 2, 3, \ldots$, consider the line $p_k : \R \to \R^2_S$ sending $t$ to $(t, kt)$
and let $p_k'(0) = (p_k)_*(\ddt|_0)$ be its tangent vector at $0$.
Let $p : \R \to \R^2_S$ be a smooth curve such that
$p(t) = 0$ for $t \leq 0$,
$p(1/k) = 0$ for each $k$,
and $p'(1/k)$ is a non-zero multiple of $p_k'(0)$ for each $k$.
Such a curve exists by considering points $\pm(2^{-k}, k2^{-k})$ and applying
the Special Curve Lemma~\cite[Corollary~2.10]{KM}.
Note that the first condition implies that $p'(0) = 0$, using Lemma~\ref{lem:half-const}.

Now, $T_0 ^{\dvs}(\R^2_S)$ is a fine diffeological vector space (Proposition~\ref{prop:fine}) and the vectors $p_k'(0)$
are linearly independent in $T_0^{\dvs} (\R^2_S)$ (\cite[Example~3.22(1)]{CW2}), so there is a smooth linear map
$\alpha : T_0^{\dvs}(\R^2_S) \to \R$ such that $\alpha(p'(1/k)) = 1$ for each $k$.
Consider an extension $\bar{\alpha} : T^{\dvs} \R^2_S \to \R$ of $\alpha$ to a fibrewise linear map.
The curve $p$ induces a smooth map $Tp : T\R \to T^{\dvs} \R^2_S$,
and we have that
$\bar{\alpha}(Tp(\ddt|_{1/k})) = \alpha(p'(1/k)) = 1$
and $\bar{\alpha}(Tp(\ddt|_0)) = \alpha(0) = 0$,
so $\bar{\alpha}$ is not smooth.
\end{ex}

We deduce that the tangent bundle of $\R^2_S$ is not trivial.

\begin{rem}
For $x, y \in \R^2_S$, there is a canonical linear diffeomorphism $T_x (\R^2_S) \cong T_y (\R^2_S)$.
This is clear because translation by $y - x$, which sends $x$ to $y$, is a diffeomorphism.
This might suggest that $T^{\dvs}\R^2_S$ is a trivial bundle, but
Example~\ref{ex:spaghetti-r} and Proposition~\ref{prop:trivial-cotangent}
imply that it is not.
This subtlety arises essentially because for fixed $a$,
the map $t \mapsto t + a$ from $\R^2_S$ to $\R^2_S$ is smooth,
but the addition map $\R^2_S \times \R^2_S \to \R^2_S$ is not smooth.
\end{rem}

\vspace*{10pt}
\end{document}